\newcommand{\todo}[1] {} %{\footnote{\color{red} *** #1 ***}}
\theoremstyle{plain}
\newtheorem{thm}{Theorem}[section]
\newtheorem{lemma}[thm]{Lemma}
\newtheorem{defn}[thm]{Definition}
\newtheorem{definition}[thm]{Definition}
\theoremstyle{remark}
\newtheorem{remark}[thm]{Remark}
\def\makeCal#1{\expandafter\newcommand\csname c#1\endcsname{\mathcal{#1}}}
\def\makeBB#1{\expandafter\newcommand\csname b#1\endcsname{\mathbb{#1}}}
\def\makeFrak#1{\expandafter\newcommand\csname f#1\endcsname{\mathfrak{#1}}}
\edef\y{\@Alph\count@}
\newcommand{\ignore}[1]{}
\renewcommand{\comment}[1] {}%{\footnote{*** #1 ***}}
\newcommand {\id}{\operatorname{id}}
\newcommand{\im}{\operatorname{im}}
\newcommand {\Hom}{\operatorname{Hom}}
\newcommand{\End}{\operatorname{End}}
\newcommand {\Aut}{\operatorname{Aut}}
\newcommand{\half}{\tfrac{1}{2}}
\newcommand{\tensor}{\otimes}
\renewcommand{\O}{\mathscr{O}}
\newcommand{\isom}{\cong}
\newcommand{\mat}[4]{\begin{pmatrix}#1&#2\\#3&#4\end{pmatrix}}
\newcommand{\hk}{hyperk{\"a}hler }
\newcommand{\lra}{\longrightarrow}
\newcommand{\lRa}[1]{\stackrel{#1}{\longrightarrow}}
\newcommand{\Lie}{\cL}
\newcommand{\PGL}{\operatorname{PGL}}
\newcommand{\MCG}{\operatorname{MCG}}
\newcommand{\hash}{\#}
\renewcommand{\leq}{\leqslant}
\newcommand{\PT}{\operatorname{PT}}
\newcommand{\GL}{\operatorname{GL}}
\begin{document}

\title{Joyce structures and their twistor spaces}

\author{Tom Bridgeland}

\begin{abstract}
Joyce structures are a class of geometric structures which first arose in relation to holomorphic generating functions for Donaldson-Thomas invariants. They can be thought of as non-linear analogues of Frobenius structures, or as special classes of complex  \hk manifolds. We give a detailed introduction to Joyce structures, with particular focus on the geometry of the associated twistor space. We also prove several new results. 
 \end{abstract}

%%%%%%%%%%%%%%%%%%%%%%%%%%%%%%%%%%%%%%%%%%%%%%%
%%%%%%%%%%%%%%%%%%%%%%%%%%%%%%%%%%%%%%%%%%%%%%%
%%%%%%%%%%%%%%%%%%%%%%%%%%%%%%%%%%%%%%%%%%%%%%%
%%%%%%%%%%%%%%%%%%%%%%%%%%%%%%%%%%%%%%%%%%%%%%%

 \maketitle

\section{Introduction}

The aim of this paper is to give an  introduction to a class of geometric structures known as Joyce structures.  These structures have appeared in several contexts recently, including integrable systems  \cite{A2,D,DM2} and  topological string theory \cite{AP,AP2}. A Joyce structure on a complex manifold $M$ involves a one-parameter family of flat and symplectic  non-linear connections on the tangent bundle $T_M$, and gives rise to a complex \hk structure on the total space $X=T_M$. The precise definition\todo{Mention this is a strong Joyce sstructure in the language of \cite{Strachan}}  first  appeared in \cite{RHDT2}, but the essential features are standard in  twistor theory (see e.g. \cite{DM}), and go back to work of  Pleba{\'n}ski \cite{P}.

Joyce structures take their name from a line of research initiated in  \cite{holgen} which aims to encode the Donaldson-Thomas (DT)  invariants \cite{JS,KS1} of a three-dimensional Calabi-Yau (CY$_3$) category in a geometric structure on its space of stability conditions \cite{Stab1,Stab2}.   From this point of view a Joyce structure should be thought of as a non-linear analogue of a Frobenius structure \cite{Dub1,Dub2}, in which the linear structure group $\GL_n(\bC)$ has been replaced by the group of Poisson automorphisms of a complex torus $(\bC^*)^n$. The wall-crossing formula shows that the DT invariants can be viewed as the Stokes data for an isomonodromic family of irregular connections on $\bP^1$ taking values in this group. This perspective is the subject of \cite{RHDT2}  and is summarised in \cite[Appendix A]{Strachan}.

Relations between the wall-crossing formula in DT theory and real \hk manifolds  were first discovered in the celebrated work of Gaiotto, Moore and Neitzke \cite{GMN1,GMN2}. The connection with complex  \hk manifolds is somewhat different and was made  in \cite{Strachan}. In physical terms the two stories are related by the conformal limit \cite{AP,AP2,G}.  

The geometry of a Joyce structure  is often clearer when viewed through the lens of the associated twistor space $p\colon Z\to \bP^1$. Each fibre of the map $p$ is the leaf space of a half-dimensional foliation on $X=T_M$. There are essentially three distinct fibres, $Z_0$, $Z_1$ and $Z_\infty$, of which $Z_0$ is naturally identified with  $M$. It is intriguing to note that in simple examples associated to the DT theory of a quiver, the fibre $Z_0=M$ is a quotient of the space of stability conditions, whereas $Z_1$ is closely related to the  cluster Poisson variety. The fibre $Z_\infty$ remains rather mysterious.

An interesting  class of examples of Joyce structures was constructed in \cite{CH}.  The base $M$ parameterises pairs consisting of a Riemann surface of some fixed genus $g$ equipped with a quadratic differential  with simple zeroes.  The extension to spaces of quadratic differentials with poles of fixed orders will appear in \cite{Z}. These Joyce structures are expected to arise from the DT theory of the  CY$_3$ categories considered in \cite{BS, H}, and relate in physics \cite{BLMST} to supersymmetric gauge theories of class $S[A_1]$. We give explicit descriptions of the two simplest examples of this type, which are naturally associated to the DT theory of the A$_1$ and A$_2$ quivers respectively. The A$_2$  example is particularly interesting, and is closely related to the  Painlev{\'e} I equation.

\subsection*{Plan of the paper}  We begin in Section \ref{general} with the notion of a pre-Joyce structure on a  complex manifold $M$. It gives rise to a complex \hk structure on the total space $X=T_M$. A Joyce structure is a pre-Joyce structure with certain additional symmetries. These symmetries are discussed in Section \ref{further} and are controlled by a special type of integral affine structure on $M$ which we  call  a period structure.\footnote{Some of this material appears also in \cite{Strachan} but we have chosen to tell the story again from the beginning because a more detailed treatment of several points seems worthwhile, and experience with examples has suggested a few small changes in the definitions.}

The twistor space $p\colon Z\to \bP^1$ associated to a Joyce structure is introduced in Section \ref{twist}. In Section \ref{letstwistagain} we establish several new results on its structure.
In Section \ref{ham} we show how to associate a strongly-integrable Hamiltonian system to a Joyce structure equipped with certain extra data, namely a Lagrangian submanifold $R\subset Z_\infty$, and an identification of $M$ with an open subset of the cotangent bundle of a complex manifold $B$.

In the rest of the paper we discuss examples. In Section \ref{geometric} we first recall from \cite{CH} the construction of Joyce structures of class $S[A_1]$ on spaces of holomorphic quadratic differentials. We then explain how some of the constructions from previous sections play out in this setting.
In Section \ref{a1} and \ref{a2} we give explicit formulae in the two simplest cases, which are  associated to the DT theory of the A$_1$ and A$_2$ quivers respectively. 
 
%\subsection{Background}
\subsection*{Conventions } We work throughout in the category of complex manifolds and holomorphic maps. All symplectic forms, metrics, bundles, connections, sections etc. are  holomorphic. The holomorphic tangent bundle of a complex manifold $M$ is denoted $T_M$, and the derivative of a map of complex manifolds $f\colon M\to N$ is denoted $f_*\colon T_M\to f^*(T_N)$. The map $f$ is called {\'e}tale if $f_*$ is an isomorphism. We use the symbol $\cL$ to denote the Lie derivative.
%We call a holomorphic map of complex manifolds {\'e}tale if it is a local homeomorphism. 

\subsection*{Acknowledgements} The author is very grateful for conversations and correspondence with Sergey Alexandrov, Anna Barbieri, Maciej Dunajski, Dominic Joyce, Dimitry Korotkin, Davide Masoero, Boris Pioline, Ian Strachan, J{\"o}rg Teschner and Iv{\'a}n Tulli.
%%%%%%%%%%%%%%%%%%%%%%%%%%%%%%%%%%%%%%%%%%%%%%%
%%%%%%%%%%%%%%%%%%%%%%%%%%%%%%%%%%%%%%%%%%%%%%%
%%%%%%%%%%%%%%%%%%%%%%%%%%%%%%%%%%%%%%%%%%%%%%%
%%%%%%%%%%%%%%%%%%%%%%%%%%%%%%%%%%%%%%%%%%%%%%%

\section{Pre-Joyce structures}
\label{general}

Recall \cite{Dub1,Dub2} that a Frobenius structure on a complex manifold $M$ consists of a pencil of flat, torsion-free connections on the tangent bundle of $M$  with certain additional properties (the existence of the identity and Euler vector fields, potentiality of the multiplication, etc). In a similar way,  a Joyce structure on $M$ consists of a pencil of flat, symplectic \emph{non-linear} connections on the tangent bundle of $M$, again admitting certain additional symmetries. In this section we focus on the pencil of  connections, which we refer to as a pre-Joyce structure, leaving discussion of the required  symmetries to the next section. %Following \cite{Strachan} the complex \hk structure associated to a pre-Joyce structure.

\subsection{Non-linear connections}
\label{connect}

We begin by briefly summarising some basic facts about non-linear  connections in the sense of Ehresmann (see e.g. \cite[Chapter 3]{new}). Recall that we are always working in the category of complex manifolds, so all connections will be holomorphic. %Since we deal with connections on maps which are not assumed to be proper, the parallel transport maps for our connections are only partially-defined. We make a few precise remarks about this, although they will not play an important role in what follows.
 
Let $\pi\colon X\to M$ be a holomorphic submersion of complex manifolds. We denote the fibres by $X_m=\pi^{-1}(m)$. The derivative of $\pi$ gives rise to a  short exact sequence of bundles
\begin{equation}\label{bass}0\lra T_{X/M}\lRa{i} T_X\lRa{\pi_*} \pi^*(T_M)\lra 0.\end{equation}
%where $\pi_*$ denotes the derivative of $\pi$. 
\begin{defn}
A non-linear
connection on  the map $\pi$ is a   map of bundles $h\colon \pi^*(T_M)\to T_X$ satisfying $\pi_*\circ h=1$.\end{defn}
Writing $H=\im(h)$ and $V=T_{X/M}$, the tangent bundle of $X$ decomposes as a direct sum $T_X=H\oplus V$. We call tangent vectors and vector fields horizontal or vertical if they lie in $H$ or $V$ respectively. %Note that one could define a connection on an arbitrary holomorphic map $\pi\colon X\to M$, but its existence  would imply that $\pi$ was a submersion.
Note that a vector field $u\in H^0(M,T_M)$ can be lifted to a horizontal vector field $h(u)\in H^0(X,T_X)$ by composing  the pullback $\pi^*(u)\in H^0(X,\pi^*(T_M))$ with the map $h$.

Consider a smooth path $\gamma\colon [0,1]\to M$. Given a point  $x\in X_{\gamma(0)}$ we can look for a lifted path  $\alpha\colon [0,\delta]\to X$ satisfying $\alpha_*(\frac{d}{dt})=h(\gamma_*(\frac{d}{dt}))$ and $\alpha(0)=x$. Since we have not assumed that $\pi$ is proper, such a  lift will exist only for small enough $\delta>0$. For $t\in [0,\delta]$ we call $\alpha(t)\in X_{\gamma(t)}$ the time $t$ parallel transport of the point $x$ along the path $\gamma$. Given a point $x_0\in X_{\gamma(0)}$ we can find a $\delta>0$ and open subsets  $U_t\subset X_{\gamma(t)}$ with $x_0\in U_0$, such that time $t$ parallel transport along $\gamma$ defines an  isomorphism $\PT_{\gamma}(t)\colon U_0\to U_t$ for each $t\in [0,\delta]$.

Given  complex manifolds $M,N$ there is a connection on the projection map $\pi_M\colon M\times N\to M$ induced by the canonical splitting $T_{M\times N}=\pi_M^*(T_M)\oplus \pi_N^*(T_N)$.  The connection $h$ is called flat if it is locally isomorphic to a connection of this form. More precisely:

\begin{definition}
The connection $h$ is flat if the following equivalent conditions hold:
\begin{itemize}
\item[(i)] for every $x\in X$ there are local co-ordinates  $(x_1,\cdots, x_n)$ on $X$ at $x$, and $(y_1,\cdots, y_d)$ on $M$ at  $\pi(x)$, such that $x_i=\pi^*(y_i)$ and  $h(\frac{\partial}{\partial y_i})=\frac{\partial}{\partial x_i}$ for $1\leq i\leq d$,
\item[(ii)]  % $h([v_1,v_2])=[h(v_1),h(v_2)]$ for any vector fields $v_1,v_2$ on $M$. 
the sub-bundle $H=\im(h)\subset T_X$ is closed under Lie bracket: $[H,H]\subset H$.
\end{itemize}
\end{definition}

Suppose given a relative symplectic form $\Omega_{\pi}\in H^0(X,\wedge^2\,\, T^*_{X/M})$ on the map $\pi$. It restricts to a symplectic form $\Omega_m\in H^0(X_m,\wedge^2\,\, T^*_{X_m})$ on each fibre $X_m$. 
 We say that the connection $h$ preserves $\Omega_{\pi}$ if for any path $\gamma\colon [0,1]\to M$  the partially-defined parallel transport maps $\PT_{\gamma}(t)\colon X_{\gamma(0)}\to X_{\gamma(t)}$ take $\Omega_{\gamma(0)}$ to $\Omega_{\gamma(t)}$. This is equivalent to the statement that for any horizontal vector field $u$ on $X$ the Lie derivative $\cL_{u}(\Omega_\pi)=0$.

Using the decomposition $T_X=H\oplus V$ the relative form $\Omega_{\pi}$ can be lifted uniquely to a form $\Omega\in H^0(X,\wedge^2\,  T^*_{X})$ satisfying $\ker(\Omega)=H$.
\begin{lemma}
\label{symp}
\begin{itemize}
\item[(i)] The connection $h$ preserves $\Omega_\pi$ precisely if $i_{v_1} i_{v_2} (d\Omega)=0$ for any two vertical vector fields $v_1,v_2\in H^0(X,T_{X/M})$.
\item[(ii)] If the connection $h$ is flat then it preserves $\Omega_\pi$ precisely if $d\Omega=0$.
\end{itemize}
 \end{lemma}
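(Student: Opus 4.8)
The strategy is to work throughout with the lifted $2$-form $\Omega$ rather than the relative form $\Omega_\pi$, and to reduce both statements to evaluating the $3$-form $d\Omega$ on a local frame adapted to $T_X=H\oplus V$. The starting point is that $\ker(\Omega)=H$ means $i_u\Omega=0$ for every horizontal $u$, so Cartan's formula gives $\cL_u\Omega = i_u\,d\Omega$ for horizontal $u$; evaluated on vertical fields this reads $(\cL_u\Omega)(v_1,v_2)=d\Omega(u,v_1,v_2)$. Since $d\Omega$ is a tensor, and since $\cL_u\Omega_\pi$ is $\O_X$-linear in the horizontal argument $u$ when tested against vertical fields, it suffices to take $u$ to be a lift $h(\tfrac{\partial}{\partial y_i})$ of a coordinate field on $M$.

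Next I would relate $\cL_u\Omega$ to $\cL_u\Omega_\pi$. For $u$ a lift of a field on $M$ and $v_1,v_2$ vertical, the brackets $[u,v_j]$ are again vertical, being $\pi$-related to $[\pi_*u,0]=0$. Because $\Omega$ agrees with $\Omega_\pi$ on vertical vectors, expanding $(\cL_u\Omega)(v_1,v_2)$ by the Leibniz rule and using verticality of the brackets shows it equals $(\cL_u\Omega_\pi)(v_1,v_2)$. Invoking the criterion established just before the lemma, that $h$ preserves $\Omega_\pi$ iff $\cL_u\Omega_\pi=0$ for all horizontal $u$, this yields the key reformulation: \emph{$h$ preserves $\Omega_\pi$ if and only if $d\Omega(u,v_1,v_2)=0$ for all horizontal $u$ and vertical $v_1,v_2$}; call this condition $(\ast)$.

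The one further input needed for part (i) is that the purely vertical part of $d\Omega$ vanishes automatically: writing $\iota_m\colon X_m\into X$ for the inclusion of a fibre, $\iota_m^*\Omega=\Omega_m$, and since $d$ commutes with pullback, $d\Omega(v_1,v_2,v_3)=(d\Omega_m)(v_1,v_2,v_3)=0$ for vertical $v_i$ by closedness of the symplectic form $\Omega_m$. Now part (i) follows: the vanishing $i_{v_1}i_{v_2}(d\Omega)=0$ for all vertical $v_1,v_2$ says exactly that $d\Omega(v_1,v_2,w)=0$ for all vertical $v_1,v_2$ and all $w$. Decomposing $w$ into its horizontal and vertical parts, the vertical part is covered by the fibrewise closedness just noted, while the horizontal part is precisely $(\ast)$. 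Hence (i) is equivalent to $(\ast)$, that is, to $h$ preserving $\Omega_\pi$.

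For part (ii), the implication $d\Omega=0\Rightarrow$ preservation is immediate from (i) and needs no flatness. For the converse I would evaluate $d\Omega$ on the remaining frame configurations, those with at least two horizontal slots, using the explicit formula for the exterior derivative of a $2$-form together with $\ker(\Omega)=H$ and flatness $[H,H]\subset H$: every summand either pairs $\Omega$ against a horizontal argument, or contains a bracket $[u_i,u_j]$ of horizontal fields which flatness keeps horizontal, so in all cases it lands in $\ker(\Omega)$ and vanishes. Combined with fibrewise closedness (three vertical slots) and $(\ast)$ (one horizontal, two vertical slots), multilinearity forces $d\Omega=0$. The main obstacle is the bookkeeping of the second paragraph, namely pinning down the precise sense in which $\cL_u\Omega_\pi$ for the relative form agrees with the Lie derivative of its lift $\Omega$; the decisive structural point is that in the mixed horizontal computation the only potentially obstructing term is the one governed by $[H,H]$, which is exactly why flatness is both necessary and sufficient to upgrade $(\ast)$ to full closedness of $\Omega$.
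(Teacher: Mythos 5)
Your proof is correct, and for part (ii) it follows essentially the same route as the paper: assume flatness and preservation, evaluate $d\Omega$ on a frame adapted to $T_X=H\oplus V$, kill every configuration with at least two horizontal slots using $H\subset\ker(\Omega)$ together with $[H,H]\subset H$, and dispose of the remaining configurations using part (i). (The paper handles the three-vertical case via part (i) as well, whereas you use fibrewise closedness of $\Omega_m$; this is an immaterial difference, since both are available.) The genuine difference is in part (i): the paper does not prove it at all, but simply cites \cite[Theorem 4]{Gotay}, a reference which concerns real symplectic fibre bundles rather than the holomorphic setting used here. Your argument --- Cartan's formula $\cL_u\Omega=i_u(d\Omega)$ for horizontal $u$ (valid because $i_u\Omega=0$), the identification of $(\cL_u\Omega)(v_1,v_2)$ with $(\cL_u\Omega_\pi)(v_1,v_2)$ for $u$ a lift of a vector field on $M$ (using that $[u,v_j]$ is then vertical), tensoriality of $d\Omega$ to pass to arbitrary horizontal $u$, and fibrewise closedness of $\Omega_m$ to handle a purely vertical third slot --- supplies a self-contained proof of exactly the statement the paper outsources. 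This is worth having: it makes the lemma independent of a citation that does not literally apply, at the modest cost of the bookkeeping in your second paragraph. In particular, your restriction to projectable horizontal fields when defining $\cL_u\Omega_\pi$ by the Leibniz rule, followed by the tensoriality argument, is the right way to sidestep a real pitfall: for a non-projectable horizontal $u$ the bracket $[u,v]$ with a vertical $v$ need not be vertical, so the naive Lie derivative of the relative form is not even defined there.
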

 
 \begin{proof}
 Part (i) is  \cite[Theorem 4]{Gotay}.\todo{But note it doesn't seem to be actually proved there, and anyway that's for real symplectic forms.} For part (ii), note that one implication follows from (i), so let us assume that $h$ is flat and preserves $\Omega_\pi$ and prove that $d\Omega=0$.
 
 Take  three vector fields $u_1,u_2,u_3$ on $X$ and consider the expression defining $d\Omega(u_1,u_2,u_3)$. We can assume that each $u_i$ is either horizontal or vertical. Note that  both horizontal and vertical vector fields are closed under  Lie bracket, and $i_h(\Omega)=0$ for any horizontal vector field $h$. Thus   $d\Omega(u_1,u_2,u_3)=0$ as soon as two of the $u_i$ are horizontal. In the remaining cases two of the $u_i$ are vertical, and the claim follows from part (i).\end{proof}

Suppose that a discrete group $G$ acts freely and properly on  $X$ preserving the map $\pi$. Then $Y=X/G$ is a complex manifold and the quotient map $q\colon X\to Y$ is {\'e}tale. There is an induced submersion $\eta\colon Y\to M$ and a factorisation $\pi=\eta\circ q$. A connection $h\colon \pi^*(T_M)\to T_X$  will be called $G$-invariant  if $g_*\circ h=h$ for all $g\in G$. There is then an induced connection $j\colon \eta^*(T_M)\to T_Y$ on $\eta$  defined uniquely by the condition that $q_*\circ h =q^*(j)$. We say that the connection $h$ descends along the quotient map $q$.

\subsection{Pre-Joyce structures}
\label{prejoyce}

Let $M$ be a complex manifold and let $\pi\colon X=T_M\to M$ be the total space of the tangent bundle of $M$. There is  a canonical isomorphism $\nu\colon \pi^*(T_M)\to T_{X/M}$  obtained by composing the chain of identifications
\begin{equation}\pi^*(T_M)_x=T_{M,\pi(x)}=  T_{T_{M,\pi(x)},x}= T_{X_{\pi(x)},x}=T_{X/M,x}.\end{equation}
 We set $v=i\circ\nu$.  A connection $h\colon \pi^*(T_M)\to T_X$ on $\pi$ then defines a family of  connections $h_\epsilon=h+\epsilon^{-1} v$ parameterised by $\epsilon\in \bC^*$.\footnote{At this stage it might seem more sensible to parameterise the pencil by $t=\epsilon^{-1}$. In later applications however it is the parameter $\epsilon$ which appears most naturally.}  % We call such a family a $\nu$-pencil of connections.

\begin{equation}
\xymatrix@C=1em{  
 0\ar[rr] && T_{X/M}  \ar[rr]^{i} &&T_X  \ar[rr]^{\pi_*} &&\pi^*(T_M) \ar@/_1.8pc/[ll]_{h_\epsilon} \ar@/^1.8pc/[llll]^\nu \ar[rr] && 0 } \end{equation}

Suppose that $M$ is equipped with a holomorphic symplectic form $\omega\in H^0(M,\wedge^2 \, T^*_M)$. Via the isomorphism $\nu$ we obtain a  relative symplectic form $\Omega_\pi\in H^0(X,\wedge^2 T_{X/M}^*)$ which restricts to  a linear symplectic form $\omega_m$ on each fibre $X_m=T_{M,m}$.
We say that a connection on $\pi$ is symplectic if it preserves $\Omega_\pi$ in the sense defined above.  %Note that if one of the connections in a $\nu$-pencil  $h_\epsilon=h+\epsilon^{-1} v$  is symplectic then they all are. 

\begin{defn}
A pre-Joyce structure $(\omega,h)$ on a complex manifold $M$ consists of
\begin{itemize}
\item[(i)] a {holomorphic symplectic form} $\omega$ on $M$,
\item[(ii)] a {non-linear connection} $h$ on the tangent bundle $\pi\colon X=T_M\to M$,
\end{itemize}
such that for each $\epsilon\in \bC^*$ the connection $h_\epsilon=h+\epsilon^{-1}v$ is flat and symplectic.\end{defn}

Take a local co-ordinate system  $(z_1,\cdots,z_n)$ on $M$ which is Darboux, in the sense that
\begin{equation}
\label{wpq}\omega = \frac{1}{2}\cdot \sum_{p,q} \omega_{pq} \cdot dz_p\wedge dz_q,\end{equation}
with $\omega_{pq}$ a constant skew-symmetric matrix.  We denote by $\eta_{pq}$ the inverse matrix.

There are  associated linear co-ordinates $(\theta_1,\cdots,\theta_n)$ on the tangent spaces $T_{M,m}$  obtained by writing a tangent vector in the form $\sum_i \theta_i\cdot  {\partial}/{\partial z_i}$. We thus get  induced local  co-ordinates $(z_i,\theta_j)$ on  the total space $X=T_M$. In these co-ordinates 
\begin{equation}\label{v}v_i:=v\Big(\frac{\partial}{\partial z_i}\Big)=\frac{\partial}{\partial \theta_i}.\end{equation}

%\begin{equation}
%\label{above}v_i=v\Big(\frac{\partial}{\partial z_i}\Big)= \frac{\partial}{\partial \theta_i}, \qquad h_i=h\Big(\frac{\partial}{\partial z_i}\Big)= \frac{\partial}{\partial z_i} + \sum_{p,q} \eta_{pq} \cdot \frac{\partial W_i}{\partial \theta_p} \cdot \frac{\partial}{\partial \theta_q},\end{equation}

The fact that the connection $h$ is flat and symplectic ensures that we can write\begin{equation}
\label{above}h_i:=h\Big(\frac{\partial}{\partial z_i}\Big)= \frac{\partial}{\partial z_i} + \sum_{p,q} \eta_{pq} \cdot \frac{\partial W_i}{\partial \theta_p} \cdot \frac{\partial}{\partial \theta_q},\end{equation}
for locally-defined functions $W_i=W_i(z,\theta)$. Note that $W_i$ is only well-defined up to the addition of functions $a_i(z)$ independent of the $\theta$ co-ordinates. We can fix these integration constants by insisting that $W_i$ vanishes along the zero section $M\subset X=T_M$, i.e. that
\begin{equation}\label{fix1} W_i(z_1,\cdots,z_n,0,\cdots,0)=0.
\end{equation}

The connection $h_\epsilon$ is  flat precisely if $\big[h_i+\epsilon^{-1} v_i,h_j+\epsilon^{-1} v_j\big]=0$ for all $1\leq i,j\leq n$. A short calculation shows that this holds for all $\epsilon\in \bC^*$ precisely if

 \begin{gather}\label{southgate}
     \frac{\partial}{\partial \theta_k}\Big(\frac{\partial W_i}{\partial \theta_j}-\frac{\partial W_j}{\partial \theta_i}\Big)=0, \\
\label{jude}
    \frac{\partial}{\partial \theta_k}\Big(\frac{\partial  W_i}{\partial z_j}-\frac{\partial W_j}{\partial z_i }-\sum_{p,q} \eta_{pq} \cdot \frac{\partial W_i}{\partial \theta_p} \cdot \frac{\partial W_j}{\partial \theta_q}\Big)=0.
 \end{gather}
for all $1\leq i,j,k\leq n$.

\begin{remark}So as to be able to include certain interesting examples it is sometimes useful to weaken the axioms of a (pre-) Joyce structure to allow the connection  $h\colon \pi^*(T_M)\to T_X$  to have poles.  In precise terms this means that   $h$ should be defined by a bundle map $h\colon \pi^*(T_M)\to T_X(D)$ satisfying \begin{equation}(\pi_*\tensor \O_X(D))\circ h=1_{\pi^*(T_M)}\tensor s_D,\end{equation}
 where  $D\subset X$ is an effective divisor, and $s_D\colon \O_X\to \O_X(D)$ is the canonical inclusion. When expressed in  terms of local co-ordinates as above, this just means that the   functions $W_i(z,\theta)$ are meromorphic. We will refer to the resulting structures as meromorphic (pre-) Joyce structures.
 \end{remark} 
 
\subsection{Complex \hk structures}
\label{comphk}

The following  structures have appeared before in the literature under various different names (see e.g. \cite{BP,D, JV1,JV2}). We emphasise that,  as throughout the paper, all quantities appearing are holomorphic.  %the metric $g$, the Levi-Civita connection $\nabla$  and the endomorphisms $I,J,K$ are all holomorphic quantities on $X$.
\begin{definition} 
\label{complexhk}
A complex \hk structure $(g,I,J,K)$ on  a complex manifold $X$ consists of
\begin{itemize}
\item[(i)] a metric $g$, i.e. a symmetric, non-degenerate bilinear form $g\colon T_X \tensor T_X \to \O_X$,
\item[(ii)] endomorphisms $I,J,K\in \End_X(T_X)$,
\end{itemize}
such that:
\begin{itemize}
\item[(HK1)]  the quaternion relations hold:
$I^2=J^2=K^2=IJK=-1$,
\item[(HK2)]$I,J,K$ are compatible with  $g$ and  are parallel for the associated  Levi-Civita connection $\nabla$:
\begin{equation}g(R (u_1), R(u_2))= g(u_1,u_2) , \qquad \nabla(R)=0, \qquad R\in \{I,J,K\}.\end{equation}
\end{itemize}
\end{definition}

Let $M$ be a complex manifold with a holomorphic symplectic form $\omega$. A non-linear connection $h$ on the tangent bundle $\pi\colon X=T_M\to M$ gives a decomposition\begin{equation}T_X=\im(h)\oplus \im(v) \isom \pi^*(T_M)\tensor_{\bC}\bC^2.\end{equation}
We can then define an action of the quaternions on $T_X$ by choosing an identification of the complexification of the quaternions $\bH\tensor_{\bR}\bC$  with the algebra of $2\times 2$ matrices $\End_{\bC}(\bC^2)$. We can also define a metric $g$  by taking the  tensor product of  $\pi^*(\omega)$ with a linear symplectic form on $\bC^2$.

With appropriate conventions this leads to the formulae\footnote{Compared to \cite{Strachan} we have changed the signs of $I$ and $K$, and divided the metric by 2.}

\begin{equation}
\label{ijk}
\begin{aligned}
I\circ h&=i\cdot h,\qquad\quad & J\circ h&=-v,  &\qquad\quad  K\circ h&=i\cdot  v, \\
I\circ v&=-i\cdot v,   &  J\circ v&=h,   & K\circ v&=i \cdot h, \end{aligned}\end{equation}
which should be interpreted as equalities of maps $\pi^*(T_M)\to T_X$, and
\begin{equation}\label{g}g(h(u_1),v(u_2))=\half \omega(u_1,u_2), \qquad g(h(u_1),h(u_2))=0= g(v(u_1),v(u_2)).
\end{equation}
It is easily checked that $g$ is preserved by the endomorphisms $I,J,K$.

The following result implies in particular that a pre-Joyce structure on a complex manifold $M$  induces a complex \hk structure on the total space $X=T_M$.

\begin{thm} \label{jon}The   operators $I,J,K$ are parallel for the Levi-Civita connection $\nabla$ associated to $g$ precisely if the connection $h_\epsilon=h+\epsilon^{-1}v $ is flat and symplectic for all $\epsilon\in \bC^*$.\end{thm}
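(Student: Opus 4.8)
The plan is to reduce the parallelism of $I,J,K$ to the closedness of the three fundamental $2$-forms $\omega_I=g(I\,\cdot\,,\cdot)$, $\omega_J=g(J\,\cdot\,,\cdot)$, $\omega_K=g(K\,\cdot\,,\cdot)$, and then to identify those closedness conditions with the pencil being flat and symplectic. The first ingredient is the holomorphic analogue of Hitchin's lemma: since $(g,I,J,K)$ already satisfies the quaternion relations (HK1) and the compatibility of $g$ with $I,J,K$, the operators are parallel for the holomorphic Levi-Civita connection $\nabla$ if and only if $d\omega_I=d\omega_J=d\omega_K=0$. One direction is immediate ($\nabla g=0$ and $\nabla R=0$ give $\nabla\omega_R=0$, hence $d\omega_R=0$ since $\nabla$ is torsion-free); the converse is the substantive statement that closedness of the three forms forces both the involutivity of the $\pm i$-eigenbundle decompositions and then parallelism. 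Its proof is purely tensorial, using only the defining properties of $\nabla$ and the quaternionic identities, so it transfers from the Riemannian case to holomorphic metrics; confirming this transfer is the one genuinely non-formal point.

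Next I would evaluate the three forms in the splitting $T_X=\im(h)\oplus\im(v)$ using \eqref{ijk} and \eqref{g}. A short computation gives $\omega_I(h(u_1),v(u_2))=\tfrac{i}{2}\,\omega(u_1,u_2)$, with $\omega_I$ vanishing on pairs that are both horizontal or both vertical, whereas $\omega_J$ and $\omega_K$ vanish on mixed pairs and satisfy $\omega_J(h(u_1),h(u_2))=\omega_J(v(u_1),v(u_2))=\tfrac12\,\omega(u_1,u_2)$ and $\omega_K(h(u_1),h(u_2))=-\omega_K(v(u_1),v(u_2))=-\tfrac{i}{2}\,\omega(u_1,u_2)$. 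This yields the clean identifications
\begin{equation*}
\omega_J+i\,\omega_K=\pi^*(\omega),\qquad \omega_J-i\,\omega_K=\Omega,
\end{equation*}
where $\Omega$ is the lift of $\Omega_\pi$ with kernel $\im(h)$ introduced before Lemma \ref{symp}, and $\pi^*(\omega)$ is the pullback form supported on the horizontal subbundle.

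With these in hand I would write down the lift $\Omega_\epsilon$ of $\Omega_\pi$ whose kernel is the horizontal distribution $H_\epsilon=\im(h_\epsilon)$ of $h_\epsilon=h+\epsilon^{-1}v$. Imposing the two defining conditions — that it restrict to $\omega$ on the fibres and that $\iota_{h_\epsilon(u)}\Omega_\epsilon=0$ — and expanding in the basis $\{\pi^*(\omega),\omega_I,\Omega\}$ gives
\begin{equation*}
\Omega_\epsilon=\frac{1}{\epsilon^2}\,\pi^*(\omega)+\frac{2i}{\epsilon}\,\omega_I+\Omega .
\end{equation*}
Since $d\,\pi^*(\omega)=\pi^*(d\omega)=0$, we have $d\Omega_\epsilon=\tfrac{2i}{\epsilon}\,d\omega_I+d\Omega$, a polynomial in $\epsilon^{-1}$; it therefore vanishes for all $\epsilon\in\bC^*$ exactly when $d\omega_I=0$ and $d\Omega=0$, which together with the automatic identity $d(\omega_J+i\omega_K)=0$ is equivalent to $d\omega_I=d\omega_J=d\omega_K=0$. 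Thus the parallelism condition is equivalent to $d\Omega_\epsilon=0$ for every $\epsilon$.

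It remains to show, for each fixed $\epsilon$, that $d\Omega_\epsilon=0$ is equivalent to $h_\epsilon$ being flat and symplectic. Here $\ker\Omega_\epsilon=H_\epsilon$ exactly, because $\Omega_\epsilon$ restricts to the non-degenerate form $\omega$ on $\im(v)$ and $T_X=H_\epsilon\oplus\im(v)$. If $d\Omega_\epsilon=0$, then for $X,Y\in H_\epsilon=\ker\Omega_\epsilon$ and arbitrary $Z$ the invariant formula for the exterior derivative collapses to $d\Omega_\epsilon(X,Y,Z)=-\Omega_\epsilon([X,Y],Z)$, forcing $[X,Y]\in\ker\Omega_\epsilon=H_\epsilon$; hence $H_\epsilon$ is involutive and $h_\epsilon$ is flat, while $h_\epsilon$ is symplectic by Lemma \ref{symp}(i). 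Conversely, if $h_\epsilon$ is flat and symplectic then $d\Omega_\epsilon=0$ by Lemma \ref{symp}(ii). Combining the three equivalences — Hitchin's lemma, the polynomial-in-$\epsilon$ computation, and this per-$\epsilon$ statement — proves the theorem. I expect the only real obstacle to be the adaptation of Hitchin's lemma to holomorphic metrics; the rest follows directly from the explicit formula for $\Omega_\epsilon$ together with Lemma \ref{symp}.
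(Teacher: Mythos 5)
Your proposal is correct, and at the level of overall strategy it mirrors the paper's proof: both arguments turn on the pencil of $2$-forms $\Omega_\epsilon=\epsilon^{-2}\Omega_++2i\epsilon^{-1}\Omega_I+\Omega_-$ (your $\pi^*(\omega)$, $\omega_I$, $\Omega$ are the paper's $\Omega_+$, $\Omega_I$, $\Omega_-$, and your formula for $\Omega_\epsilon$ agrees with the one derived in the paper's proof), and both use Lemma \ref{symp} to convert ``symplectic'' into closedness. The genuine difference is the key lemma linking closed forms to parallelism. The paper introduces, for each $\epsilon$, the endomorphism $J_\epsilon=I-i\epsilon^{-1}(J+iK)$, checks $J_\epsilon^2=-1$ and that its $+i$ eigenbundle is $H_\epsilon$, and then invokes the single-structure statement \eqref{ab}: flatness of $h_\epsilon$ in the forward direction comes from parallelism of $J_\epsilon$, and parallelism of $I,J,K$ in the converse comes from applying $d\Omega_R=0\Rightarrow\nabla(R)=0$ to each structure separately. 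You dispense with the $J_\epsilon$ altogether: flatness falls out of $d\Omega_\epsilon=0$ by the kernel-collapse argument, and the converse is Hitchin's lemma for the triple. This buys you something real. For a single holomorphic $R$ with $R^2=-1$ compatible with $g$, the implication $d\Omega_R=0\Rightarrow\nabla(R)=0$ is the ``almost K{\"a}hler implies K{\"a}hler'' direction, which needs the eigenbundles of $R$ to be involutive (vanishing Nijenhuis tensor); in K{\"a}hler geometry this is supplied by integrability of the complex structure, and has no automatic counterpart for a bare $R$. Hitchin's lemma is exactly the device that produces this involutivity from closedness of the \emph{other two} forms: the $\pm i$ eigenbundles of $I$ are the kernels of $\Omega_{J\mp iK}$, so your kernel-collapse argument applied to $\Omega_\pm$ gives involutivity of both, and then the tensorial K{\"a}hler identity together with $d\Omega_I=0$ gives $\nabla(I)=0$, and similarly for $J$ and $K$. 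So your route makes explicit an integrability input that the paper's appeal to \eqref{ab} leaves implicit; the price, as you note, is verifying that Hitchin's lemma transfers to holomorphic metrics, which is formal once phrased through involutivity of the eigenbundles (no Newlander--Nirenberg is needed), and is the same kind of transfer the paper itself asserts when it states \eqref{ab}.
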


\begin{proof}

We begin with a general remark. Let $g\colon T_X\times T_X\to \O_X$ be a metric on a complex manifold $X$ with associated Levi-Civita connection $\nabla$. Let $R\in \End_X(T_X)$ be an endomorphism which is compatible with $g$ and satisfies $R^2=-1$. We can then define a 2-form $\Omega$ on $X$ by setting $\Omega_R(u_1,u_2)=g(R(u_1),u_2)$. Let $H\subset T_X$ denote the $+i$ eigenbundle of $R$. Then standard proofs from K{\"a}hler geometry apply unchanged in this holomorphic context  to give implications
\begin{equation}\label{ab}\nabla(R)=0\implies [H,H]\subset H, \qquad \nabla(R)=0 \iff d \Omega_R=0.\end{equation}

Return now to  the setting above. For $\epsilon \in \bC^*$ we  introduce the endomorphism  \begin{equation}J_\epsilon=I-i\epsilon^{-1}(J+iK).\end{equation} A simple calculation using the definitions \eqref{ijk} shows that  $J_\epsilon^2=-1$, and that the $+i$ eigenbundle of $J_\epsilon$ coincides with $H_\epsilon=\im(h_\epsilon)$. 

As in Section \ref{prejoyce}, the symplectic form $\omega$ on $M$ induces a relative symplectic form $\Omega_\pi$ on the projection $\pi\colon X\to M$. Moreover, as explained before Lemma \ref{symp}, there is then a unique 2-form $\Omega_\epsilon$ on $X$ satisfying  the conditions\begin{equation}\ker(\Omega_\epsilon)=H_\epsilon, \qquad \Omega_\epsilon(v(u_1),v(u_2))=\omega(u_1,u_2),\end{equation} where $u_1,u_2$ are arbitrary vector fields on  $M$. Another calculation using  \eqref{ijk} and \eqref{g} shows that this form is given explicitly by the formula
\begin{equation}
\Omega_\epsilon =\epsilon^{-2}\cdot \Omega_++2i\epsilon^{-1}\cdot \Omega_I +\Omega_-, \qquad \Omega_\pm=\Omega_{J\pm iK}.\end{equation}

We can now prove the Theorem. Suppose first that $I,J,K$ are parallel. Then  $J_\epsilon$ is parallel for all $\epsilon\in \bC^*$, and applying \eqref{ab} with $R=J_\epsilon$ we find that $[H_\epsilon,H_\epsilon]\subset H_\epsilon$ and hence that $h_\epsilon$ is flat. Since $\Omega_\epsilon$ is also parallel and hence closed, applying Lemma \ref{symp} shows that $h_\epsilon$ is  symplectic.
Conversely suppose that for all $\epsilon \in \bC^*$ the connection $h_\epsilon$ is flat and symplectic.  Then by ~Lemma \ref{symp} again, $d\Omega_\epsilon=0$  for all $\epsilon \in \bC^*$, and this  easily implies that  $d\Omega_R=0$ for $R\in \{I,J,K\}$. By \eqref{ab} we conclude that $I,J,K$ are parallel. 
\end{proof}

\subsection{Associated 2-forms}

The complex \hk structure $(g,I,J,K)$ gives rise to closed 2-forms  on $X$  \begin{equation}
\label{fart}\Omega_I(w_1,w_2)=g(I(w_1),w_2),\qquad \Omega_{\pm}(w_1,w_2)=g((J\pm iK)(w_1),w_2).\end{equation}
Note that $\Omega_I$ is non-degenerate, but since $(J\pm iK)^2=0$ the forms $\Omega_\pm$ have kernels.

Let us express these forms in terms of a local co-ordinate system  $(z,\theta)$ on $X$  as in Section \ref{prejoyce}. We denote by  $(h^i,v^j)$ the basis of covector fields   dual to the basis of vector fields  $(h_i,v_j)$ defined by \eqref{v} -- \eqref{above}. Thus  $(h^j,v_i)=0=(v^j,h_i)$ and $(h^j,h_i)=\delta_{ij}=(v^j,v_i)$. Explicitly we have  \begin{equation}
\label{above2}h^j= dz_j, \qquad v^j= d\theta_j + \sum_{r,s} \eta_{jr} \cdot \frac{\partial W_s}{\partial \theta_r} \cdot dz_s.\end{equation}
The definition \eqref{ijk} of the operators $I,J,K$ immediately give

\begin{equation}\label{here}(J+iK)\circ h =-2v,\qquad  (J+iK)\circ v=0=(J-iK)\circ h, \qquad (J-iK)\circ v=2h,\end{equation}

and so  we have

\begin{gather}\Omega_+=\frac{1}{2}\cdot \sum_{p,q} \omega_{pq} \cdot h^p \wedge h^q, \qquad \Omega_I= \frac{i}{2}\cdot  \sum_{p,q} \omega_{pq} \cdot v^p\wedge h^q, \\\Omega_-= \frac{1}{2} \cdot \sum _{p,q} \omega_{pq} \cdot v^p\wedge v^q.  \end{gather}
Using the formulae \eqref{above2} these expressions can be rewritten as

\begin{gather}
\label{simple}\Omega_+=\frac{1}{2}\cdot  \sum_{p,q} \omega_{pq} \cdot dz_p \wedge dz_q, \\
\label{south}2i \Omega_I=\frac{1}{2}\cdot \sum_{p,q} \bigg(\frac{\partial W_p}{\partial \theta_q}-\frac{\partial W_q}{\partial \theta_p} \bigg)\cdot dz_p\wedge dz_q-\sum_{p,q} \omega_{pq} \cdot d \theta_p  \wedge d z_q, \\
\label{w}\Omega_-=\frac{1}{2}\cdot \sum_{p,q}\omega_{pq} \cdot d\theta_p\wedge d\theta_q+\sum_{p,q} \frac{\partial W_q}{\partial \theta_p } \cdot d \theta_p \wedge d z_q -\frac{1}{2} \cdot \sum_{p,q,r,s} \eta_{rs}\cdot \frac{\partial W_p}{\partial \theta_r} \frac{\partial W_q}{\partial \theta_s} \cdot dz_p\wedge dz_q.\end{gather}
Note that by \eqref{southgate} the first term in \eqref{south} is independent of the co-ordinates $\theta_k$ and hence descends to $M$. In the case of a Joyce structure the identity \eqref{nick} shows that this term vanishes.
%Note that $\Omega_I$ is a symplectic form, but $\Omega_\pm$ are degenerate.
% \begin{equation}
% \label{w}\Omega_-=\half\sum_{p,q}\omega_{pq} \cdot d\theta_p\wedge d\theta_q+\sum_{p,q} \frac{\partial^2 W}{\partial \theta_p \partial\theta_q} \cdot d \theta_p \wedge d z_q +\sum_{p,q} \frac{\partial^2 W}{\partial z_p \partial \theta_q} \cdot dz_p\wedge dz_q,\end{equation}
% where we used the heavenly equation \eqref{point_intro} to obtain the last term in \eqref{w}.

%%%%%%%%%%%%%%%%%%%%%%%%%%%%%%%%%%%%%%%%%%%%%%%
%%%%%%%%%%%%%%%%%%%%%%%%%%%%%%%%%%%%%%%%%%%%%%%
%%%%%%%%%%%%%%%%%%%%%%%%%%%%%%%%%%%%%%%%%%%%%%%
%%%%%%%%%%%%%%%%%%%%%%%%%%%%%%%%%%%%%%%%%%%%%%%

\section{Joyce structures}
\label{further}
  
A Joyce structure on a complex manifold $M$ is a pre-Joyce structure with certain additional symmetries.  These symmetries are  controlled by a special kind of integral affine structure on $M$ which we  call  a period structure. After introducing the necessary definitions we derive some consequences of the extra symmetries, both for the associated complex \hk structure  on $X=T_M$, and for the local generating functions $W_i$. 

\subsection{Period structures}

Let $\cH$ be a  holomorphic vector bundle on a complex manifold $M$. By a {lattice} in $\cH$ we mean a  locally-constant subsheaf of abelian groups $\cH^{\bZ}\subset \cH$ such that the multiplication map $\cH^{\bZ}\tensor_{\bZ}\O_M\to \cH$ is an isomorphism. There is an induced flat (linear) connection  $\nabla$ on $\cH$ whose  flat sections are $\bC$-linear combinations of the sections of  $\cH^{\bZ}$. 

\begin{defn}
\label{pp}
A period structure $(T_M^{\bZ},\nabla,Z)$ on a complex manifold $M$  consists of
\begin{itemize}
\item[(P1)] a lattice $T_M^{\bZ}\subset T_M$ whose associated flat connection is  denoted $\nabla$,
\item[(P2)]  a vector field $Z\in \Gamma(M,T_M)$ satisfying $\nabla(Z)=\id$.
\end{itemize}
\end{defn}

Given a  point $p\in M$, a basis of the free abelian group $T_{M,p}^{\bZ}$ extends uniquely to a basis of $\nabla$-flat sections $\phi_1,\cdots,\phi_n$ of $T_M$ over a contractible open neighbourhood $p\in U\subset M$. Writing the vector field $Z$ in the form $Z=\sum_i z_i\cdot \phi_i$ then defines holomorphic functions $z_i\colon U\to \bC$.  Given a local co-ordinate system $(u_1,\cdots,u_n)$ on $M$,  condition (P2) shows that
\begin{equation}
    \label{late}
\frac{\partial}{\partial u_j}=\nabla_{\frac{\partial}{\partial u_j}}(Z)=\sum_i\frac{\partial z_i}{\partial u_j} \cdot \phi_i,\end{equation}
for all $1\leq j\leq n$, from which it follows that $(z_1,\cdots,z_n)$ is also a local co-ordinate system. Applying \eqref{late} with $u_j=z_j$ then shows that $\phi_i=\frac{\partial}{\partial z_i}$.  This implies in particular that the connection $\nabla$ is torsion-free.

Recall  \cite{KS} that an integral affine structure on a complex manifold $M$ is a lattice $T_M^{\bZ}\subset T_M$ whose associated flat  connection $\nabla$ is torsion-free . A local co-ordinate system $(z_1,\cdots,z_n)$ is then called integral affine if the tangent vectors $\frac{\partial}{\partial z_i}$ lie in the lattice $T_M^{\bZ}$. Such co-ordinate systems are uniquely defined up to affine transformations of the form $z_i\mapsto \sum_j a_{ij}z_j + v_i$ with $(a_{ij})\in \GL_n(\bZ)$ and $(v_i)\in \bC^n$.

Given a period structure on a complex manifold $M$ we obtain an integral affine structure  by forgetting the vector field $Z$. A system of integral affine co-ordinates $(z_1,\cdots,z_n)$ will be called integral linear if $Z=\sum_i z_i \cdot \frac{\partial}{\partial z_i}$. Such co-ordinate systems are uniquely defined up to linear transformations of the form $z_i\mapsto \sum_j a_{ij}z_j$ with $(a_{ij})\in \GL_n(\bZ)$. Thus a period structure can be thought of as an integral \emph{linear} structure.

%In many natural examples the vector field $Z$ integrates to an action of $\bC^*$ on the manifold $M$. 

\begin{definition}
A period structure will be called homogeneous if the vector field $Z$ generates an action of the multiplicative group $\bC^*$ on $M$.  \end{definition}

%Sometimes it is more convenient to consider the weaker condition that $Z$ generates an action of the additive group $\bC$. % Of course this reduces to the first condition precisely if $2\pi i \in \bC$ acts by the identity.

We can use the connection $\nabla$ on $T_M$ to lift the vector field $Z$ on $M$ to a horizontal vector field $E$ on $X=T_M$. If we take a system of integral linear co-ordinates $(z_1,\cdots,z_n)$ on $M$, and associated co-ordinates $(z_i,\theta_j)$ on  $X=T_M$ as  in Section \ref{prejoyce}, then
\begin{equation}
    \label{ze}
    Z=\sum_i z_i\cdot \frac{\partial}{\partial z_i}, \qquad E=\sum_i z_i\cdot \frac{\partial}{\partial z_i}.
\end{equation}In what follows we   refer to  both $Z$ and $E$ as Euler vector fields. This will hopefully  not cause confusion since they live on different spaces. 

\begin{lemma}
\label{jen}
Let $(T_M^{\bZ},\nabla,Z)$ be a homogeneous period structure on a complex manifold $M$. Then  the lifted Euler vector field  $E$ generates an action of $\bC^*$ on $X=T_M$.   This action is obtained by combining the
derivative of the action map $a\colon \bC^*\times M\to M$, with a rescaling of weight $-1$ on the linear fibres of the projection $\pi\colon T_M\to M$.  
\end{lemma}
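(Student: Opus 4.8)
The plan is to realise the $\bC^*$-action on $X$ explicitly as a product of two commuting actions and then to check that $E$ is its infinitesimal generator; this simultaneously establishes that $E$ is complete and integrates to a $\bC^*$-action, and identifies that action with the one in the statement. By the homogeneity assumption the Euler field $Z$ integrates to a holomorphic action $a\colon \bC^*\times M\to M$; write $a_\lambda\colon M\to M$ for the automorphism $m\mapsto a(\lambda,m)$. First I would introduce two families of automorphisms of $X=T_M$: the \emph{derivative action} $b_\lambda=(a_\lambda)_*\colon T_M\to T_M$, and the \emph{fibrewise rescaling} $r_\lambda\colon T_M\to T_M$, $\xi\mapsto \lambda^{-1}\xi$. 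Each is manifestly a $\bC^*$-action: for $b$ this is the chain rule $(a_{\lambda\mu})_*=(a_\lambda)_*\circ (a_\mu)_*$ combined with $a_{\lambda\mu}=a_\lambda\circ a_\mu$, and for $r$ it is immediate.

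Next I would observe that $b$ and $r$ commute. Indeed $b_\lambda$ restricts on each fibre to the \emph{linear} isomorphism $(a_\lambda)_*\colon T_{M,m}\to T_{M,a_\lambda(m)}$, whereas $r_\mu$ is multiplication by the scalar $\mu^{-1}$, and scalar multiplication commutes with linear maps; hence $b_\lambda\circ r_\mu=r_\mu\circ b_\lambda$. It follows that $c_\lambda:=r_\lambda\circ b_\lambda$ is again a $\bC^*$-action on $X$, and this is exactly the action described in the statement.

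It then remains to identify the generator of $c$ with $E$. Since $b$ and $r$ commute, the generator of $c$ is the sum of their generators. The generator of $r$ is the weight $-1$ vertical field $-\sum_j \theta_j\,\partial/\partial\theta_j$. For the generator of $b$ I would use the standard fact that the infinitesimal generator of the derivative action is the complete (tangent) lift of $Z$, which relative to the linear connection $\nabla$ decomposes as the horizontal lift of $Z$, namely $E$, plus the vertical lift of the endomorphism $\nabla(Z)$. By axiom (P2) we have $\nabla(Z)=\id$, whose vertical lift is precisely $\sum_j\theta_j\,\partial/\partial\theta_j$. Thus the generator of $b$ is $E+\sum_j\theta_j\,\partial/\partial\theta_j$, and adding the generator of $r$ leaves exactly $E$. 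Equivalently, one checks this in the integral linear coordinates of \eqref{ze}: the complete lift of $Z=\sum_i z_i\,\partial/\partial z_i$ is $\sum_i z_i\,\partial/\partial z_i+\sum_i\theta_i\,\partial/\partial\theta_i$, and subtracting the rescaling generator returns $\sum_i z_i\,\partial/\partial z_i=E$.

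The main obstacle is the correct identification of the generator of the derivative action with the complete lift together with its decomposition relative to $\nabla$; the whole point is that the $\nabla(Z)=\id$ contribution is a vertical field of weight $+1$, which the weight $-1$ rescaling is designed to cancel, leaving the purely horizontal field $E$. Once the signs are matched, everything else is routine bookkeeping with commuting flows.
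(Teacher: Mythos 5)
Your proof is correct and takes essentially the same route as the paper: the paper likewise realises the action as the derivative of $a\colon \bC^*\times M\to M$ composed with the weight $-1$ fibrewise rescaling and identifies its generator with $E$, carrying this out directly in integral linear coordinates — precisely your final ``equivalently'' computation. Your invariant formulation via the complete lift and its decomposition into the horizontal lift plus the vertical lift of $\nabla(Z)=\id$ is a coordinate-free dressing of the same argument, with the added care of checking that the two actions commute.
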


\begin{proof}
Take local co-ordinates $(z_i)$ on $M$ and $(z_i,\theta_j)$ on $X$ as above. The $\bC^*$-action on $M$ is given  by $t\cdot (z_i)= (tz_i)$. The derivative of the action map $a\colon \bC^*\times M\to M$ defines a $\bC^*$-action on $X$ given by $t\cdot (z_i,\theta_j)= (tz_i,t\theta_j)$. Composing with the weight $-1$ rescaling action on  the fibres gives the action $t\cdot (z_i,\theta_j)=(tz_i,\theta_j)$ whose generating vector field is $E$.
\end{proof}

\subsection{Joyce structures}
\label{joydef}

We define a Joyce structure by combining  a  pre-Joyce structure with a compatible period structure. Given a symplectic form $\omega\colon T_M\times T_M\to \O_M$ there is an induced pairing  $\eta\colon T_M^*\times T_M^*\to \O_M$  defined by the condition that the induced maps $\omega^{\flat}\colon T_M\to T_M^*$ and $\eta^\flat\colon T_M^*\to T_M$ are mutually inverse. We refer to $\eta$ as the inverse of $\omega$.% and view it as a pairing on $\eta\colon T_M^*\times T_M^*\to \O_M$.

% Let us first incorporate one more piece of data into our period structure.

% \begin{defn}
% \label{pps}
% A {period structure with skew form} on a complex manifold $M$ consists of a period structure $(T_M^{\bZ},\nabla,Z)$  together with a skew-symmetric form
% \begin{equation}\eta\colon T_M^*\times T_M^*\to \O_M,\end{equation}
% such that\footnote{The factor of $(2\pi i)^{-1}$ introduced here  is another small change from \cite{Strachan}.} $(2\pi i)^{-1}\eta$ takes integral values  on the lattice $(T_M^{\bZ})^*\subset T_M^*$.
% \end{defn}

% The pairing $\eta$ is necessarily parallel for the flat connection $\nabla$, and it follows that it defines a Poisson structure on $M$. 
% We will be particularly interested in the case when the kernel of $\eta$ is zero. Viewing $\eta$ as a linear map $ T_M^*\to T_M$, the inverse map defines a complex symplectic form
%  $\omega\in H^0(M,\wedge^2\, T_M^*)$. 
% %Note that $2\pi i \omega$ takes rational values on the lattice $T_M^{\bZ}\subset T_M$.

\begin{defn}
\label{joyce}
  A {Joyce structure} on a complex manifold $M$ consists of
\begin{itemize}
\item[(a)] a period structure $(T_M^{\bZ},Z,\nabla)$ on $M$,
\item[(b)] a  pre-Joyce structure $(\omega,h)$ on $M$, \end{itemize}
satisfying the following compatibility conditions:
\begin{itemize}
\item[(J1)]  if $\eta$ is the inverse of $\omega$ then $(2\pi i)^{-1} \eta$ takes integral values on the  dual lattice $(T_M^{\bZ})^*\subset T_M^*$,

\item[(J2)]  the connection $h$ is invariant under translations by the lattice $(2\pi i)\,T_M^{\bZ}\subset T_M$,

\item[(J3)] if $E$ is the $\nabla$-horizontal lift of the vector field $Z$, then for any vector field $u$ on $M$ \begin{equation}h([Z,u])=[E,h(u)],\end{equation}

\item[(J4)] the connection $h$ is invariant under the action of the involution $-1\colon X\to X$ which acts by multiplication by $-1$ on the fibres of $\pi\colon X=T_M\to M$.
\end{itemize}
We say that a Joyce structure is homogeneous if the underlying period structure is.
\end{defn}

Given a period structure on a complex manifold $M$ we  introduce the quotient
\begin{equation}
\label{covid}X^{\hash}=T_M^{\hash}=T_M/(2\pi i)\,T_M^{\bZ}.\end{equation}
Axiom (J2) is the statement that the connection $h$ descends to a connection on the $(\bC^*)^{n}$-bundle over $M$ given by the projection  $\pi\colon X^{\hash}\to M$.

\begin{remark}
    In the context of DT theory, in which $M$ is a space of numerical stability conditions on a CY$_3$ category, the  period structure $(T_M^{\bZ},Z,\nabla)$ and symplectic form $\omega$ are  well-known and immediate: the integral linear co-ordinates are given by central charges, and the symplectic form is induced by the inverse of the Euler form. The extra content of the Joyce structure is then just    the non-linear connection $h$, which  however is required to  satisfy a complicated system of   symmetry and curvature conditions. 
\end{remark}

\subsection{Associated \hk structures}
Given a Joyce structure on a complex manifold $M$,  the associated complex \hk structure  $(g,I,J,K)$ on $X=T_M$ defined in Section \ref{comphk} has certain extra symmetry properties.  For example, axiom (J2) ensures that  it is invariant under the translations by the lattice $(2\pi i)\,T_M^{\bZ}\subset T_M$, and hence descends to the quotient manifold $X^\hash$ introduced above. The other axioms give the following result. %The following result similarly expresses the axioms (J3) and (J4) in terms of this complex \hk structure. 

\begin{lemma}
There are identities

\begin{align}
\label{scales}  \Lie_E(I)&=0,  &\Lie_E(J\pm iK)&=\pm(J\pm iK),& \Lie_E(g)&=g.\\
\label{scales2} (-1)^*(I)&=I, & (-1)^*(J\pm iK)&=-(J\pm iK),& (-1)^*(g)&=-g.
\end{align}
%where $\cL$ denotes the Lie derivative.
\end{lemma}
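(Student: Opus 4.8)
The plan is to derive the six identities directly from the explicit local expressions for the operators and the metric, exploiting the action of the relevant vector field/involution on the framing $(h_i,v_j)$ and its dual $(h^j,v^j)$. The key observation is that both $E$ and the involution $-1$ act in a controlled way on the horizontal and vertical subbundles, and the quaternionic operators and metric are defined entirely in terms of $h$ and $v$ via \eqref{ijk} and \eqref{g}; so once I understand how $E$ (resp.\ $-1$) transforms $h$ and $v$, the transformation of $I,J,K,g$ follows by bilinearity.

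First I would treat the Euler identities \eqref{scales}. Using Lemma \ref{jen}, $E$ generates the $\bC^*$-action $t\cdot(z_i,\theta_j)=(tz_i,\theta_j)$, so the $z_i$ have weight $1$ and the $\theta_j$ have weight $0$; equivalently $\Lie_E(dz_i)=dz_i$ and $\Lie_E(d\theta_j)=0$. The crucial input from axiom (J3) is that $h$ is equivariant, $[E,h(u)]=h([Z,u])$, which translates into a homogeneity statement for the generating functions $W_i$ and hence controls $\Lie_E(h_i)$ and $\Lie_E(h^i)$; since $v_i=\partial/\partial\theta_i$ is manifestly $E$-invariant, I get $\Lie_E(v)=0$ on the nose, while $h$ scales in a definite way. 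I would then read off $\Lie_E$ of each operator from \eqref{ijk}: because $I\circ h=ih$ and $I\circ v=-iv$ are homogeneous of the same weight in both slots, $I$ is $E$-invariant; whereas $J\pm iK$ exchanges $h$ and $v$ (see \eqref{here}), which carry different $E$-weights, producing the eigenvalue $\pm1$. For the metric, \eqref{g} pairs one horizontal and one vertical vector with $\omega$, and since $\omega$ has weight $1$ in these integral linear coordinates (as $\omega_{pq}dz_p\wedge dz_q$ with the $z$'s of weight $1$), I obtain $\Lie_E(g)=g$. I find it cleanest to verify the scaling weights of the closed $2$-forms $\Omega_+,\Omega_I,\Omega_-$ using the explicit formulae \eqref{simple}--\eqref{w} and then transfer back to $g,I,J,K$ via $\Omega_R(u_1,u_2)=g(R(u_1),u_2)$.

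Next I would handle the involution identities \eqref{scales2}. The map $-1\colon X\to X$ sends $(z_i,\theta_j)\mapsto(z_i,-\theta_j)$, so it fixes horizontal directions along the zero section but reverses the fibre coordinates; concretely $(-1)^*(dz_i)=dz_i$ and $(-1)^*(d\theta_j)=-d\theta_j$. Axiom (J4) says $h$ is invariant under $-1$, which forces the $W_i$ to satisfy a parity condition (odd in $\theta$, given the normalization \eqref{fix1}); I would use this to compute the pullback of the dual coframe $(h^j,v^j)$ from \eqref{above2}, finding $h^j$ even and $v^j$ odd under $-1$. Feeding this into the explicit forms shows $\Omega_+$ even, $\Omega_I$ odd, $\Omega_-$ odd, whence $(-1)^*I=I$, $(-1)^*(J\pm iK)=-(J\pm iK)$; and since $\Omega_+=\Omega_{J+iK}$ carries a factor of $\omega=\pi^*(\omega)$ that is even while being built from $g$ and an odd operator, consistency forces $(-1)^*g=-g$.

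The main obstacle will be extracting the precise homogeneity and parity properties of the generating functions $W_i$ from axioms (J3) and (J4), and in particular keeping the bookkeeping of $E$-weights and $\pm$-parities of $h^j,v^j$ straight through the correction terms $\eta_{jr}(\partial W_s/\partial\theta_r)$ appearing in \eqref{above2}. I expect the cleanest route is to avoid coordinates where possible: state the action of $E$ and $-1$ on $h$ and $v$ intrinsically (using Lemma \ref{jen} together with (J3), and the definition of $-1$ together with (J4)), and then argue purely from the defining relations \eqref{ijk}, \eqref{here}, \eqref{g} that $I,J,K,g$ must transform as claimed. This reduces the whole lemma to two short intrinsic computations plus linear algebra, with the coordinate formulae \eqref{simple}--\eqref{w} serving only as a consistency check.
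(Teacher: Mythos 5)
Your overall strategy coincides with the paper's own proof: the paper evaluates $\cL_E$ and $(-1)^*$ on the frame $h(u)$, $v(u)$ using axiom (J3), axiom (J4), the definition of the involution, and Lemma \ref{jen}, and then reads off the transformation of $I$, $J\pm iK$, $g$ from \eqref{ijk}, \eqref{here} and \eqref{g} — exactly the ``intrinsic route'' you advocate in your last paragraph. However, the concrete bookkeeping you sketch contains errors that would derail the argument as written. First, $\omega$ does not have weight $1$: in integral linear coordinates $\omega=\frac{1}{2}\sum_{p,q}\omega_{pq}\,dz_p\wedge dz_q$ with constant coefficients and $\cL_Z(dz_p)=dz_p$, so $\cL_Z(\omega)=2\omega$ (this is what the paper extracts from (J1)). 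The reason $g$ nevertheless has weight $1$ is the asymmetry between the two factors it pairs: $h^j=dz_j$ has weight $1$ while $v^j$ has weight $0$; equivalently, in the intrinsic language, $\cL_E(h(u))=h(\cL_Z(u))$ but $\cL_E(v(u))=v(\cL_Z(u)+u)$, the extra term coming precisely from the weight $-1$ fibre rescaling in Lemma \ref{jen} (this is \eqref{foot} in the paper). Your two slips — assigning $\omega$ weight $1$ and silently treating $v$ as strictly $E$-equivariant — compensate each other; done correctly without the shift you would get $\cL_E(g)=2g$.

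Second, the parity bookkeeping for \eqref{scales2} is off in two places. Axiom (J4) forces $\partial W_i/\partial\theta_j$ to be odd in $\theta$, hence $W_i$ \emph{even} and the Pleba{\'n}ski function $W$ odd (this is \eqref{starmer} and \eqref{starmer2}), not ``$W_i$ odd'' as you state; your conclusion that $h^j$ is even and $v^j$ is odd is still correct. More seriously, $\Omega_-$ is \emph{even} under $(-1)^*$, not odd: it equals $\frac{1}{2}\sum_{p,q}\omega_{pq}\,v^p\wedge v^q$ with both factors odd. Your table ``$\Omega_+$ even, $\Omega_I$ odd, $\Omega_-$ odd'' is inconsistent with the lemma you are proving: if $(-1)^*(g)=-g$ and $(-1)^*(J-iK)=-(J-iK)$, then $\Omega_-=g((J-iK)\,\cdot,\cdot)$ must be even. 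Finally, your derivation of $(-1)^*(g)=-g$ ``by consistency'' is circular, since the operator parities were themselves inferred from the form parities, which requires already knowing the parity of $g$. The non-circular argument (the paper's) is to obtain the parity of $g$ directly from \eqref{g} together with $(-1)^*(h(u))=h(u)$ (axiom (J4)) and $(-1)^*(v(u))=-v(u)$ (definition of the involution), and likewise to get the operator parities straight from \eqref{ijk}; the form parities are then consequences, not inputs.
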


\begin{proof}
We will evaluate both sides  of each  identity on vector fields of the form $h(u)$ and $v(u)$, where $u$ is an  arbitrary vector field  on $M$.
For   the identities \eqref{scales} note first that we have
\begin{equation}\label{foot}
\cL_E(h(u))=h(\cL_Z(u)), \qquad \cL_E(v(u))=v(\cL_Z(u)+u)
    \end{equation}
    The first of these equations is axiom (J3), and the second follows\todo{How?}  from Lemma \ref{jen}.

If $u$ is a vector field on $M$ then using  \eqref{here} we have

\begin{align}(\cL_E(J-iK)) \, h(u)&=\cL_E((J-iK) \, h(u))-(J-iK)\, \cL_E(h(u))=0,\\
\begin{split}(\cL_E(J-iK))\, v(u)&=\cL_E((J-iK)\, v(u))-(J-iK)\,\cL_E(v(u)) \\
    &=2h(\cL_Z(u))-2h(\cL_Z(u)+u)=-2h(u),\end{split}
\end{align}
and it follows that $\cL_E(J-iK)=-(J-iK)$. The claim $\cL_E(J+iK)=J+iK$ follows in the same way.
The statement that $\cL_E(I)=0$ holds because by \eqref{foot}
\begin{equation}(\cL_E(I))\, h(u)=\cL_E(I(h(u)))-I(\cL_E(h(u)))=\cL_E(i h(u))-i\cL_E(h(u))=0,\end{equation}
with an analogous identity for $v(u)$.

For the statement on the metric note first that (J1) implies that
    $\cL_Z(\omega)=2\omega$.
Then using the definition \eqref{g} of the metric $g$ we have

\begin{equation}\begin{split} \cL_E(g)(h(u_1),v(u_2))&=E \cdot g(h(u_1),v(u_2))-g(\cL_E (h(u_1)),v(u_2))-g(h(u_1),\cL_E (v(u_2)))\\&= E\cdot  \half\pi^*(\omega(u_1,u_2))-\half \pi^*(\omega(\cL_Z(u_1),u_2)) -\half\pi^*(\omega(u_1,\cL_Z(u_2)+u_2)) \\&=\half \pi^*(\cL_Z(\omega)(u_1,u_2))-\half \pi^*(\omega(u_1,u_2))=\half \pi^*(\omega(u_1,u_2))\\&=g(h(u_1),v(u_2)),\end{split}\end{equation}
where we used the fact that $\pi_*(E_x)=Z_{\pi(x)}$ for all $x\in X $ to write $E\cdot \pi^*(f)=\pi^*(Z\cdot f)$ for any  function $f$ on $M$.
An easier argument gives
\begin{equation}\cL_E(g)(h(u_1),h(u_2))=0=\cL_E(g)(v(u_1),v(u_2)),\end{equation}
so we find that $\cL_E (g) = g$ as required.

The  identities \eqref{scales2} follow from the definitions \eqref{ijk} and \eqref{g} together with
\begin{equation}  (-1)^* (h(u))=h(u), \qquad (-1)^* (v(u)) =-v(u).\end{equation}
The first of these equations is axiom (J4), and the second is immediate from the definition of the involution $-1$.
\end{proof}

\subsection{Pleba{\'n}ski function}
\label{plebe}
Consider a local system of integral linear co-ordinates $(z_1,\cdots,z_n)$ on $M$.  Taking associated co-ordinates $(z_i,\theta_j)$ on $X=T_M$ 
we can express the connections $h_\epsilon$  using locally-defined functions $W_i=W_i(z,\theta)$ as in Section \ref{prejoyce}. As before we fix the integration constants by imposing the condition    that $W_i$ vanishes on the zero section $M\subset X=T_M$.

\begin{lemma}
\label{lemmy}
   There is a unique locally-defined function $W=W(z,\theta)$ which vanishes on the zero section $M\subset X=T_M$ and satisfies %$W_i=\partial W/\partial \theta_i$
$W_i=\frac{\partial W}{\partial \theta_i}$ for all $ 1\leq i\leq n$. It also satisfies the relations
   
   \begin{gather}
\label{point_intro}
\frac{\partial^2 W}{\partial \theta_i \partial z_j}-\frac{\partial^2 W}{\partial \theta_j \partial z_i }=\sum_{p,q} \eta_{pq} \cdot \frac{\partial^2 W}{\partial \theta_i \partial \theta_p} \cdot \frac{\partial^2 W}{\partial \theta_j \partial \theta_q}\\\label{gove}\frac{\partial^2 W}{\partial \theta_i \partial \theta_j}(z_1,\cdots,z_n,\theta_1+2\pi i k_1,\cdots,\theta_n+2\pi ik_n)=\frac{\partial^2 W}{\partial \theta_i \partial \theta_j}(z_1,\cdots,z_n,\theta_1,\cdots, \theta_n), \\
\label{reeves2}W(\lambda z_1,\cdots,\lambda z_n,\theta_1,\cdots, \theta_n)=\lambda^{-1} \cdot W(z_1,\cdots,z_n,\theta_1,\cdots, \theta_n),\\\label{starmer2}
W(z_1,\cdots,z_n,-\theta_1,\cdots, -\theta_n)=-W(z_1,\cdots,z_n,\theta_1,\cdots, \theta_n),\end{gather}
for all $1\leq i,j\leq n$, where $(k_1,\cdots, k_n)\in \bZ^n$ and $\lambda\in \bC^*$.
\end{lemma}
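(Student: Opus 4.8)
The plan is to establish the existence and uniqueness of $W$ first, then derive the four listed relations \eqref{point_intro}--\eqref{starmer2} one at a time. For the existence of a potential $W$ with $W_i = \partial W/\partial\theta_i$, the integrability condition is exactly that $\partial W_i/\partial\theta_j = \partial W_j/\partial\theta_i$. This symmetry in the lower-order sense follows from \eqref{southgate}, which states $\partial_{\theta_k}(\partial W_i/\partial\theta_j - \partial W_j/\partial\theta_i)=0$: this says the antisymmetric combination $\partial W_i/\partial\theta_j - \partial W_j/\partial\theta_i$ is independent of all $\theta$-coordinates, so it is a function of $z$ alone. I would then argue that this $z$-dependent function vanishes: by the normalisation \eqref{fix1}, each $W_i$ vanishes on the zero section, hence so does $\partial W_i/\partial\theta_j - \partial W_j/\partial\theta_i$ when restricted to $\theta=0$; being $\theta$-independent, it vanishes identically. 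This gives $\partial W_i/\partial\theta_j = \partial W_j/\partial\theta_i$, so the $1$-form $\sum_i W_i\, d\theta_i$ is closed in the $\theta$-variables and, on a contractible chart, exact. Integrating along rays from the origin in $\theta$ produces $W$, and imposing $W|_{\theta=0}=0$ fixes it uniquely.

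Next I would verify the PDE \eqref{point_intro}. Substituting $W_i = \partial W/\partial\theta_i$ into \eqref{jude} shows that the $\theta_k$-derivative of the bracketed expression vanishes, so that expression is a function of $z$ alone; the same zero-section argument as above (each term vanishes on $\theta=0$ by \eqref{fix1} and its consequences) forces it to vanish identically, yielding precisely \eqref{point_intro} after rewriting $\partial W_i/\partial z_j = \partial^2 W/\partial\theta_i\partial z_j$ and $\partial W_i/\partial\theta_p = \partial^2 W/\partial\theta_i\partial\theta_p$.

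The three remaining relations are symmetry statements that I would read off from the corresponding Joyce axioms. For \eqref{gove}, the lattice-translation invariance of $h$ under $(2\pi i)T_M^{\bZ}$ (axiom (J2)) means the coefficient functions $\eta_{pq}\,\partial W_i/\partial\theta_p$ in \eqref{above}, and hence the Hessian $\partial^2 W/\partial\theta_i\partial\theta_j$, are invariant under $\theta_k \mapsto \theta_k + 2\pi i k_k$. For \eqref{reeves2}, homogeneity of $h$ under the Euler field, i.e.\ axiom (J3) together with Lemma \ref{jen}, forces the $W_i$ to transform with weight $-1$ under $z\mapsto \lambda z$ (keeping $\theta$ fixed); integrating in $\theta$ transfers this weight to $W$. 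For \eqref{starmer2}, invariance of $h$ under the fibrewise involution $-1$ (axiom (J4)) gives $W_i(z,-\theta) = -W_i(z,\theta)$, and since $W$ is obtained by integrating $W_i\, d\theta_i$ from the origin with $W|_{\theta=0}=0$, the oddness of the integrand in $\theta$ makes $W$ itself odd.

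I expect the main obstacle to be the careful bookkeeping in translating each geometric axiom into the precise transformation law for the $W_i$ in coordinates. The invariance statements (J2), (J3), (J4) are conditions on the connection map $h$, i.e.\ on the horizontal lifts $h_i$ in \eqref{above}, and one must check that invariance of the vector field $h_i$ under a given symmetry translates correctly into the stated invariance of the coefficient functions, accounting for how the symmetry acts on the coordinate frame $\partial/\partial\theta_q$ itself. In particular the weight computation for \eqref{reeves2} requires tracking how both the explicit $z$-scaling and the transformation of the basis vectors combine, and the step in \eqref{reeves2}--\eqref{starmer2} where the weight or parity is transferred from $W_i$ to its potential $W$ relies on the explicit normalisation $W|_{\theta=0}=0$ and the ray-integration formula, which I would state but not belabor.
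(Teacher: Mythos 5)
Your overall skeleton matches the paper's proof (translate the axioms into coordinate conditions on the $W_i$, then use $\theta$-independence from \eqref{southgate}/\eqref{jude} together with vanishing on the zero section), but there is a genuine gap at the very first and most delicate step. You claim that the normalisation \eqref{fix1} forces $\partial W_i/\partial\theta_j-\partial W_j/\partial\theta_i$ to vanish at $\theta=0$, "since $W_i$ vanishes on the zero section, hence so does $\partial W_i/\partial\theta_j-\partial W_j/\partial\theta_i$". This inference is false: vanishing of $W_i$ on the locus $\{\theta=0\}$ only kills derivatives \emph{along} that locus (the $z$-derivatives), not the transverse $\theta$-derivatives. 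Concretely, take $n=2$ with $W_1=\theta_2$, $W_2=-\theta_1$: these satisfy \eqref{southgate}, \eqref{jude} and \eqref{fix1}, yet $\partial W_1/\partial\theta_2-\partial W_2/\partial\theta_1=2$. So symmetry of the Hessian, i.e.\ \eqref{nick}, is \emph{not} a consequence of the pre-Joyce axioms plus the normalisation; it genuinely requires axiom (J4). The paper's proof first translates (J4) into the oddness statement \eqref{starmer} for $\partial W_i/\partial\theta_j$, and only then concludes vanishing at $\theta=0$ (an odd function of $\theta$ vanishes at the origin). The same gap recurs in your derivation of \eqref{point_intro}: the left-hand side does vanish at $\theta=0$ by \eqref{fix1} (those are $z$-derivatives of functions vanishing on the zero section), but the quadratic right-hand side involves the transverse Hessian $\partial^2W/\partial\theta_i\partial\theta_p$ at $\theta=0$, whose vanishing again needs \eqref{starmer}, not "\eqref{fix1} and its consequences".

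There is also a sign error in your treatment of \eqref{starmer2}. Axiom (J4) is a condition on the connection coefficients, so what it gives directly is the oddness of $\partial W_i/\partial\theta_j$ in $\theta$ (this is \eqref{starmer}); combined with $W_i|_{\theta=0}=0$ this makes $W_i$ itself \emph{even}, not odd as you assert, and it is the evenness of $W_i=\partial W/\partial\theta_i$ that makes $W$ odd. Your chain "$W_i$ odd, and integrating an odd integrand from the origin gives an odd potential" contains two parity mistakes that happen to cancel: an odd $W_i$ would integrate to an \emph{even} $W$. Your readings of (J2) and (J3), and the transfer of the weight $-1$ homogeneity from the connection coefficients to $W$ via the normalisation, are fine and agree with the paper.
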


\begin{proof}
   Using the expressions \eqref{ze} the axioms (J2) -- (J4)  become the conditions

\begin{gather}
\label{gove2}
\frac{\partial W_i}{ \partial \theta_j}(z_1,\cdots,z_n,\theta_1+2\pi i k_1,\cdots,\theta_n+2\pi ik_n)=\frac{\partial W_i}{\partial \theta_j }(z_1,\cdots,z_n,\theta_1,\cdots, \theta_n),
\\
\label{reeves} \frac{\partial W_i}{\partial \theta_j}(\lambda z_1,\cdots,\lambda z_n,\theta_1,\cdots, \theta_n)=\lambda^{-1} \cdot \frac{\partial W_i}{\partial \theta_j}(z_1,\cdots,z_n,\theta_1,\cdots, \theta_n),\\
\label{starmer}
\frac{\partial W_i}{\partial \theta_j}(z_1,\cdots,z_n,-\theta_1,\cdots, -\theta_n)=-\frac{\partial W_i}{\partial \theta_j}(z_1,\cdots,z_n,\theta_1,\cdots, \theta_n).\end{gather}

Note that \eqref{starmer} implies that both sides vanish along the zero section $M\subset X=T_M$ where all $\theta_k=0$. It then follows  that for all $1\leq i,j\leq n$
\begin{equation}
\label{nick}
    \frac{\partial W_i}{\partial \theta_j}-\frac{\partial W_j}{\partial \theta_i}=0.
\end{equation}
Indeed, \eqref{southgate} shows that this expression is independent of the  co-ordinates $\theta_k$, and so if  it vanishes on the zero section it must be identically zero. It follows that there is a  locally-defined function $W=W(z,\theta)$ such that $W_i=\partial W/\partial \theta_i$. We again fix the integration constants by assuming that 
 $W$ vanishes on the zero section $M\subset X=T_M$. In view of \eqref{fix1} we therefore have
\begin{equation}
\label{fix}
    W(z_1,\cdots,z_n, 0, \cdots, 0)=0= \frac{\partial W}{\partial \theta_i}(z_1,\cdots, z_n,0,\cdots,0).\end{equation}
The equations \eqref{point_intro} then follow from \eqref{jude}. Indeed \eqref{jude} shows that the difference of the two sides is independent of the co-ordinates $\theta_k$, but by \eqref{starmer} and \eqref{fix} both sides vanish along the zero section.
Similarly, we deduce \eqref{reeves2} from  \eqref{reeves}, and \eqref{starmer2} from \eqref{starmer}.
\end{proof}

The function $W$ is called the Pleba{\'n}ski function, and the partial differential equations  \eqref{point_intro} are  known  as Pleba{\'n}ski's second heavenly equations.\todo{Comment about poles causing problems with integration constants.} 

% \begin{remark}\label{fab2}  {\it A priori} it is  the second derivatives $\partial_{\theta_i}\partial_{\theta_j}(W)$  appearing in \eqref{above} that  satisfy the  symmetry properties implied by (i) -- (iii). 
% The function $W$  is  only well-defined  up to the addition of expressions of the form $a(z)+\sum_j b_j(z) \theta_j$. In fact these integration constants can be fixed by imposing the parity property (iii) above, together with the condition  that the derivatives $\partial_{\theta_i}  (W)=0$ all vanish along the  zero section $M\subset X=T_M$. One can then  check \cite[Remark 4.2]{RHDT2} that the properties (i) - (iii)  indeed hold on the nose. It is worth noting however  that this procedure may not work in the case of a meromorphic Joyce structure.
% \end{remark}

%So far we have assumed that our Joyce structure is well-defined on the full space $X=T_M$.

%Poles in the Joyce structure will lead to further complications  which we do not address in a systematic way. Note for example that they could cause problems with fixing the integration constants in Remark \ref{fab2}. 

%%%%%%%%%%%%%%%%%%%%%%%%%%%%%%%%%%%%%%%%%%%%%%%
%%%%%%%%%%%%%%%%%%%%%%%%%%%%%%%%%%%%%%%%%%%%%%%
%%%%%%%%%%%%%%%%%%%%%%%%%%%%%%%%%%%%%%%%%%%%%%%
%%%%%%%%%%%%%%%%%%%%%%%%%%%%%%%%%%%%%%%%%%%%%%%

\section{Twistor space}
\label{twist}

In this section we define the twistor space $p\colon Z\to \bP^1$ associated  to a pre-Joyce structure on a complex manifold $M$. It is defined as  the space of leaves of a foliation on $ \bP^1\times X$, where as before $X=T_M$ denotes the total space of the tangent bundle of $M$. The construction coincides with the Penrose twistor space construction \cite{Pen} applied to the complex \hk manifold on $X$.
We explain some special properties enjoyed by the twistor space of a Joyce structure, and derive an equation describing the twistor lines. In the final part we use this equation to revisit the  analogy between Joyce and Frobenius structures which was the main topic of \cite{RHDT2}.

\subsection{Construction of the twistor space}
\label{deft}

Let
$M$ be a complex manifold equipped with a pre-Joyce structure, and set $X=T_M$. For each point $(\epsilon_0,\epsilon_1)\in \bC^2\setminus\{0\}$  there is a bundle map \begin{equation}\epsilon_0 v+\epsilon_1 h\colon \pi^*(T_M)\to T_X,\end{equation} whose image, which depends only on the corresponding point $\epsilon=[\epsilon_0:\epsilon_1]\in \bP^1$, is a half-rank sub-bundle $H(\epsilon)\subset T_X$. The definition  of a pre-Joyce structure ensures that this sub-bundle  is closed under Lie bracket,
and hence defines a foliation on $X$. The twistor space $Z$ defined below will have a map $p\colon Z\to \bP^1$ whose fibre over the point $\epsilon\in \bP^1$ is the space of leaves of this foliation.

To give a global definition of the twistor space $Z$ we first recall that the tangent bundle of the product $\bP^1\times X$ has a canonical direct sum decomposition  \begin{equation}
    \label{decomp}
T_{\bP^1\times X}=\pi_1^*(T_{\bP^1})\oplus \pi_2^*(X),\end{equation} where $\pi_1,\pi_2$ denote the projections from $ \bP^1\times X$ onto the two factors. There is a half-rank sub-bundle $H\subset \pi_2^*(T_X)$ which when restricted to a fibre $\pi_1^{-1}(\epsilon)$ is the sub-bundle $H(\epsilon)\subset T_X$. Composing with the canonical inclusion $\pi_2^*(T_X)\subset T_{ \bP^1\times X}$ we can view $H$ as a sub-bundle of $T_{ \bP^1\times X}$ and this  is easily seen to be closed under Lie bracket. The twistor space $Z$ is then defined to be the space of leaves of the associated foliation on $\bP^1\times X$.

We denote by $q\colon \bP^1\times X\to Z$ the quotient map. There is an induced projection $p\colon Z\to \bP^1$ which satisfies $p\circ q =\pi_1$. We denote by $Z_\epsilon=p^{-1}(\epsilon)\subset Z$ the twistor fibre over $\epsilon\in \bP^1$, and $q_\epsilon\colon X\to Z_\epsilon$ the induced quotient map. For each $\epsilon\in \bP^1$ there is a commutative diagram
\begin{equation}
\label{keir}
\xymatrix@C=1.7em{  X \ar@{^{(}->}[rr] \ar[d]_{q_\epsilon} &&  \bP^1\times X \ar[d]^{q}\ar@/^2.5pc/[dd]^{\pi_1} \\
Z_\epsilon \ar@{^{(}->}[rr]\ar[d] && Z\ar[d]^{p} \\  \{\epsilon\} \ar@{^{(}->}[rr]&& \bP^1 }\end{equation}
in which the horizontal arrows are the obvious closed embeddings.

Each point $x\in X$ determines a section  of   the map $p\colon Z\to \bP^1$\begin{equation}
\label{twistorline}\sigma_x\colon \bP^1 \to Z, \qquad \epsilon\mapsto q(\epsilon,x),\end{equation}
which we refer to as a twistor line.

Recall the complex \hk structure $(g,I,J,K)$ on $X$ and the associated closed 2-forms $\Omega_{\pm}$ and $\Omega_I$ defined by \eqref{fart}. As in the proof of Theorem \ref{jon}, an easy calculation using \eqref{ijk} shows that %for each point $[\epsilon_0:\epsilon_1]\in \bP^1$
\begin{equation}\label{soeasy}H(\epsilon)=\im\left(\epsilon_0v+\epsilon_1h\right)=\ker\left(\epsilon_0^2 (J+iK)+2i \epsilon_0\epsilon_1  I + \epsilon_1^2(J-iK)\right).\end{equation}
Since the right-hand side is precisely the kernel of the closed 2-form
\begin{equation}
    \label{lunch}
\epsilon_0^2 \, \Omega_+ +2i \epsilon_0\epsilon_1 \, \Omega_I + \epsilon_1^2 \,\Omega_-,\end{equation}
this 2-form descends along the map $q_\epsilon\colon X\to Z_\epsilon$, and defines a symplectic form $\Omega_\epsilon$ on $Z_\epsilon$. 

 In more global terms, the formula \eqref{lunch} defines a twisted relative   2-form on $\bP^1\times X$, namely a section of the bundle $\pi_1^*(\O(2))\tensor \pi_2^*(\wedge^2\, T_X^*)$. This descends along the quotient map $q\colon X\to Z$ to give a twisted  relative symplectic form  on the twistor space $p\colon Z\to \bP^1$. By definition, this is the section  $\Omega$ of the bundle $ p^*(\O(2))\tensor \wedge^2 \, T_{Z/\bP^1}^*$ uniquely defined by the condition
\begin{equation} \label{kalym} q^*(\Omega)=\epsilon_0^2\, \Omega_+ +2i \epsilon_0\epsilon_1\Omega_I + \epsilon_1^2\,\Omega_-.\end{equation}

From now on we will take the usual embedding $\bC\subset \bP^1$ with affine co-ordinate $\epsilon=\epsilon_1/\epsilon_0$, and write $\bP^1=\bC\cup\{\infty\}$ with $\infty=[0:1]$.  
When $\epsilon\in \bC\cup\{\infty\}$, the twistor fibre $Z_\epsilon$  is the space of leaves of the foliation on $X$ associated to the integrable distribution $\im(h_\epsilon)\subset T_X$,  where as before we write $h_\epsilon=h+\epsilon^{-1}v$. On the other hand the twistor fibre $Z_0$ is the space of leaves of the foliation on $X$ associated to  the vertical sub-bundle $\im(v)=\ker(\pi_*)$, and is therefore identified with $M$. 

Restricting $\Omega$ to a twistor fibre $Z_\epsilon$ gives a complex symplectic form $\Omega_\epsilon$, well-defined up to multiplication by a nonzero constant. When $\epsilon\in \bC^*\cup\{\infty\}$ we fix this scale by taking
\begin{equation}
\label{qe}q_\epsilon^*(\Omega_\epsilon)=\epsilon^{-2} \,\Omega_+ +2i\epsilon^{-1} \,\Omega_I+ \Omega_- ,\end{equation}
whereas for  $\epsilon=0$ we take  $\Omega_0=\omega$. We then have relations
\begin{equation}
\label{dumb}q_0^*(\Omega_0)=\Omega_+, \qquad q_\infty^*(\Omega_\infty)=\Omega_-.\end{equation}

\begin{remark}
Unfortunately, to obtain a well-behaved twistor space $p\colon Z\to \bP^1$  we cannot in general just  take the space of leaves of  the foliation  on $\bP^1\times X$. Rather, we  should consider the holonomy groupoid, which leads to the analytic analogue of a Deligne-Mumford stack \cite{Moer}.   We will completely ignore these subtleties here, and essentially pretend that $Z$ is a complex manifold. %Our justification for this is twofold.
%On a practical level, if we rewrote everything in the language of Lie groupoids  we would probably lose most of our readership.
In fact, for what we do here,  nothing useful would be gained by a more abstract point of view, because  we are  only really using the twistor space as a useful and suggestive shorthand.  All statements we make about the space $Z$ can be easily translated into statements only involving    objects on $X$. For example, a symplectic form on the twistor fibre $Z_\epsilon$ is nothing but a closed 2-form on $X$ whose kernel coincides with the sub-bundle $H(\epsilon)\subset T_X$. Similarly,  an {\'e}tale map from the twistor fibre $Z_\epsilon$ to some complex manifold $Y$ is just a holomorphic map $f\colon X\to Y$ such that $\ker(f_*)=H(\epsilon)$.
\end{remark}

% \begin{remark}
% In the case of a Joyce structure, the periodicity assumption (J2) implies that the \hk structure on $X$ descends to  the quotient $X^\hash$ of \eqref{covid}. Quotienting $X^\hash$ by the integrable distribution \eqref{soeasy} as before defines an associated twistor space $Z^\hash$. There is an  {\'e}tale map $Z\to Z^\hash$ induced by the quotient map $X\to X^\hash$.
% \end{remark}

\subsection{Twistor space of a Joyce structure}

Suppose now that we have a Joyce structure on a complex manifold $M$ and consider the associated twistor space $p\colon Z\to \bP^1$.
Recall the vector field $Z$ on $M$ and the horizontal lift $E$ on $X$. 
Using the decomposition \eqref{decomp} we can define a vector field  on the product $ \bP^1\times X$ as the sum
\begin{equation}\tilde{E}=\epsilon\, \frac{d}{d\epsilon}+E.\end{equation}

\begin{lemma}
\label{election}
The vector field $\tilde{E}$ descends along the quotient map $q\colon  \bP^1\times X\to Z$. 
\end{lemma}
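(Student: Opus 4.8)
The plan is to verify that $\tilde{E}$ is \emph{projectable} (foliate) for the integrable distribution $H\subset T_{\bP^1\times X}$ whose leaf space is $Z$; that is, to show $[\tilde{E},\Gamma(H)]\subseteq\Gamma(H)$. This is the infinitesimal form of the statement that the local flow of $\tilde{E}$ carries leaves of the foliation to leaves, and hence induces a (local) flow on the quotient $Z$, whose generator is the desired descent of $\tilde{E}$. As in the translation described in the Remark above, this is exactly the condition needed for a vector field on $\bP^1\times X$ to push forward along $q$. Since $\tilde{E}$ and $H$ are both globally-defined holomorphic objects, it suffices to check the bracket condition on the dense open locus $\epsilon\in\bC^*$ and extend by continuity, although I would also verify it directly near the two special fibres $\epsilon=0,\infty$.

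Working in a local system of integral linear co-ordinates $(z_i)$ on $M$ and the induced co-ordinates $(z_i,\theta_j)$ on $X$, in the affine chart $\epsilon\in\bC^*$ the distribution $H$ is spanned by the vector fields $w_i:=h_i+\epsilon^{-1}v_i=h_\epsilon(\partial/\partial z_i)$. The computation then splits as $[\tilde{E},w_i]=[\epsilon\,\tfrac{d}{d\epsilon},w_i]+[E,w_i]$. For the first term, $h_i$ and $v_i$ are independent of $\epsilon$, so $[\epsilon\,\tfrac{d}{d\epsilon},h_i]=0$ and $[\epsilon\,\tfrac{d}{d\epsilon},\epsilon^{-1}v_i]=(\epsilon\,\tfrac{d}{d\epsilon}\epsilon^{-1})\,v_i=-\epsilon^{-1}v_i$. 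For the second term I would invoke \eqref{foot}: since $\cL_Z(\partial/\partial z_i)=-\partial/\partial z_i$ for $Z=\sum_j z_j\,\partial/\partial z_j$, these give $[E,h_i]=h(\cL_Z(\partial/\partial z_i))=-h_i$ and $[E,v_i]=v(\cL_Z(\partial/\partial z_i)+\partial/\partial z_i)=v(0)=0$. Summing, $[\tilde{E},w_i]=-\epsilon^{-1}v_i-h_i=-w_i\in\Gamma(H)$, as required.

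To globalise, near $\epsilon=0$ I would instead use the rescaled spanning sections $v_i+\epsilon h_i$, which extend over the fibre $\epsilon=0$ where $H=\im(v)$, and near $\epsilon=\infty$ the sections $h_i+\tilde\epsilon v_i$ with $\tilde\epsilon=\epsilon^{-1}$; in each case the same short bracket computation shows the result again lies in $\Gamma(H)$ (one finds $[\tilde{E},\,v_i+\epsilon h_i]=0$ and $[\tilde{E},\,h_i+\tilde\epsilon v_i]=-(h_i+\tilde\epsilon v_i)$). The point I want to stress is that the homogeneity axiom (J3), packaged in \eqref{foot} via Lemma~\ref{jen}, is precisely what makes this work: for a general pre-Joyce structure $\tilde{E}$ would \emph{not} descend. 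Beyond this, the only real care needed is the bookkeeping at the two distinguished fibres together with the leaf-space subtleties flagged in the Remark above, which I would suppress as the paper does.
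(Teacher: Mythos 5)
Your proof is correct and takes essentially the same route as the paper's: both verify that $\tilde{E}$ is projectable by bracketing it against the spanning sections of the distribution $H$ and invoking \eqref{foot} (i.e.\ axiom (J3) together with Lemma \ref{jen}), and your coordinate computation $[\tilde{E},w_i]=-w_i$ is exactly the paper's coordinate-free identity $[\tilde{E},h_\epsilon(u)]=h_\epsilon([Z,u])$ specialised to $u=\partial/\partial z_i$, where $[Z,\partial/\partial z_i]=-\partial/\partial z_i$. Your explicit verification at the fibres $\epsilon=0,\infty$ is a small extra care which the paper leaves implicit.
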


\begin{proof}
Let $u$ be a vector field on $M$. Writing $h_\epsilon(u)=h(u)+\epsilon^{-1}v(u)$ and using  \eqref{foot} we have
\begin{equation}\begin{split}\big[\tilde{E},h_\epsilon(u))\big]&=[E,h(u)]+\epsilon^{-1}[E,v(u)]-\epsilon^{-1}v(u)\\
&=h([Z,u])+\epsilon^{-1} v([Z,u])=h_\epsilon([Z,u]).\end{split}\end{equation}
Thus $\cL_{\tilde E}$ preserves the sub-bundle $H_\epsilon=\ker(q_\epsilon)_*$, and the claim follows.
\end{proof}

Let us specialise to the case of a homogeneous Joyce structure. The $\bC^*$-action on $M$ generated by the vector field $Z$ induces a $\bC^*$-action  on $X$ with generating vector field $E$ as in Lemma \ref{jen}. Combining this with the standard action of $\bC^*$  on $\bP^1$ rescaling $\epsilon$ with weight 1, we can then consider the diagonal action  on $X\times \bP^1$. It follows from Lemma \ref{election} that this  descends to an action on $Z$. Thus we obtain $\bC^*$-actions on each of the spaces in the right-hand column of \eqref{keir}, and the vertical maps $q$ and $p$ intertwine these actions.

Since  the map $p\colon Z\to \bP^1$ is $\bC^*$-equivariant, there are at most three distinct twistor fibres: $Z_0$, $Z_1$ and $Z_\infty$, and as above there is an identification  $Z_0=M$.\footnote{Note that, unlike in the case of real \hk manifolds, there is no requirement for an involution of the twistor space $Z$ lifting the antipodal map on $\bP^1$. In particular,  there need be no relation between the twistor fibres $Z_0$ and $Z_\infty$.}  
Using the definition \eqref{fart} and the identities \eqref{scales} we have
\begin{equation}\label{blbl}\cL_E(\Omega_+)=2\Omega_+, \qquad \cL_E(\Omega_I)=\Omega_I, \qquad \cL_E(\Omega_-)=0.\end{equation}The $\bC^*$-action on $Z$ induces a $\bC^*$-action on $Z_0$ and it follows from \eqref{blbl} that the symplectic form $\Omega_0$  is  homogeneous of weight 2 for this action.  Similarly there is an induced $\bC^*$-action on $Z_\infty$, but in this case the symplectic form  $\Omega_\infty$ is invariant.

We can use the $\bC^*$-action to trivialise the  map $p\colon Z\to \bP^1$ over the open subset $\bC^*\subset \bP^1$. We obtain a commutative diagram
\begin{equation}
\label{corona}
\begin{gathered}
\xymatrix@C=1.5em{  %X\times \bC^* \ar[rr]^a\ar[d]_{q_1\times 1} && X\times \bC^* \ar@{^{(}->}[rr]\ar[d]^{q} && X\times \bP^1\ar[d]^{q} \ar@/^2.5pc/[dd]^{\pi_2}\\
 \bC^*\times Z_1 \ar[rr]^{m}\ar[d]_{\pi_1} && p^{-1}(\bC^*)\ar[d]^p \ar@{^{(}->}[rr] && Z \ar[d]^p\\
\bC^*\ar@{<->}[rr]^{=} && \bC^* \ar@{^{(}->}[rr] && \bP^1}\end{gathered}\end{equation}
where $\pi_1$ is the projection onto the first factor, and the isomorphism $m$ is  the restriction of the action  map $m\colon \bC^*\times Z\to Z$.

Consider the map $y\colon \bC^*\times X\to Z_1$ given by 
\begin{equation}\label{dr}y(\epsilon,x)=\epsilon^{-1}\cdot q(\epsilon,x)=q(1,\epsilon^{-1}\cdot x).\end{equation}
Note that, under the trivialisation \eqref{corona},  the restriction of the twistor line $\sigma_x\colon \bP^1\to Z$ becomes the section of $\pi_1$ given by $\epsilon\mapsto (\epsilon, y(\epsilon,x))$.
Let us take a system of integral linear co-ordinates $(z_1,\cdots,z_n)$ on $M$ and consider the induced co-ordinates $(z_i,\theta_j)$ on $X=T_M$ as in Section \ref{prejoyce}. 

 \begin{lemma}
 \label{men}
 The map $y(\epsilon,x)$ satisfies the equation
\begin{equation}\label{nonlinearguy}\frac{\partial }{\partial  \epsilon} \, y(\epsilon,x)= \bigg(\frac{1}{\epsilon^2} \cdot \sum_i z_i\cdot \frac{\partial }{\partial \theta_i} +\frac{1}{\epsilon} \cdot  \sum_{i,p,q} \eta_{pq}  \cdot z_i\cdot \frac{\partial^2 W}{\partial \theta_i\partial \theta_p} \cdot \frac{\partial }{\partial \theta_q}\bigg) \,y(\epsilon,x),\end{equation} %H=\sum_i z_i\cdot \frac{\partial W}{\partial \theta_i},
  \end{lemma}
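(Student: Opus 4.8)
The plan is to differentiate the defining formula \eqref{dr} for $y(\epsilon,x)$ with respect to $\epsilon$ and then re-express the resulting vector field in the $(z_i,\theta_j)$ coordinates. First I would start from the second expression $y(\epsilon,x)=q(1,\epsilon^{-1}\cdot x)$, where $\epsilon^{-1}\cdot x$ denotes the $\bC^*$-action on $X$ generated by $E$. Since in integral linear coordinates this action sends $(z_i,\theta_j)\mapsto (\epsilon^{-1}z_i,\theta_j)$ by Lemma \ref{jen} and \eqref{ze}, differentiating the inner point $\epsilon^{-1}\cdot x$ in $\epsilon$ produces the vector field $-\epsilon^{-2}\sum_i z_i\,\partial/\partial z_i$ evaluated at $\epsilon^{-1}\cdot x$, i.e.\ $-\epsilon^{-1}E$ pushed to that point. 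Applying $q(1,-)_*=(q_1)_*$ then gives $\partial y/\partial\epsilon = -\epsilon^{-1}(q_1)_*(E_{\epsilon^{-1}\cdot x})$.

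The key step is to rewrite this in a form where the fibre $Z_1$ has been pulled back along $y(\epsilon,\cdot)$ rather than along $q_1$. Using the first expression $y(\epsilon,x)=\epsilon^{-1}\cdot q(\epsilon,x)$ and the $\bC^*$-equivariance of $q$, I would track how the generator $E$ transports under the rescaling; equivalently, I would compute $\partial y/\partial\epsilon$ directly from $y(\epsilon,x)=q(\epsilon,x)$ rescaled, noting that $q_\epsilon$ collapses exactly the distribution $H_\epsilon=\im(h_\epsilon)$. The point is that $(q_\epsilon)_*$ annihilates $h_\epsilon(u)=h(u)+\epsilon^{-1}v(u)$ for every $u$, so modulo the kernel I may freely replace $E$ (a horizontal field, $E=h$ applied to $Z$ in the sense of \eqref{ze}) by any field congruent to it mod $H_\epsilon$. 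Concretely, $E\equiv E - \epsilon\, h_\epsilon(Z) = -v(Z)\pmod{H_\epsilon}$, which in coordinates is $-\sum_i z_i\,\partial/\partial\theta_i$ by \eqref{v}. Carrying the factors of $\epsilon$ through the trivialisation \eqref{corona} converts this into the leading $\epsilon^{-2}\sum_i z_i\,\partial/\partial\theta_i$ term of \eqref{nonlinearguy}.

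The remaining term comes from the distinction between the pushforward operators $(q_\epsilon)_*$ and $y(\epsilon,\cdot)_*$. Because $y(\epsilon,x)=\epsilon^{-1}\cdot q(\epsilon,x)$ differs from $q_\epsilon$ by the $\bC^*$-rescaling, the coordinate field $\partial/\partial\theta_q$ on $X$ must be transported using the explicit coframe \eqref{above2}: the dual covector $v^q=d\theta_q+\sum_{r,s}\eta_{qr}(\partial W_s/\partial\theta_r)\,dz_s$ shows that expressing $\partial/\partial\theta_i$ in the $h_\epsilon$-adapted frame introduces precisely the Hessian factors $\partial^2 W/\partial\theta_i\partial\theta_p$, weighted by $\eta_{pq}$ and by $z_i$. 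This accounts for the subleading $\epsilon^{-1}$ term. I expect the main obstacle to be bookkeeping the two different pullbacks cleanly — making precise, in the ``shorthand'' sense of the Remark, what it means to differentiate a map into the leaf space $Z_1$ and to identify the answer as a well-defined vertical field on $Z_1$. The cleanest rigorous route is to phrase everything on $X$: verify that the claimed right-hand side of \eqref{nonlinearguy}, as a field along $y(\epsilon,\cdot)$, agrees with $-\epsilon^{-1}(q_1)_*(E)$ modulo $H_1$ after the trivialisation, using \eqref{above}, \eqref{v} and the identification $W_i=\partial W/\partial\theta_i$ from Lemma \ref{lemmy}.
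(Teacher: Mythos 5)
Your starting point is sound: differentiating $y(\epsilon,x)=q_1(\epsilon^{-1}\cdot x)$ in $\epsilon$ gives $\partial y/\partial \epsilon=-\epsilon^{-1}(q_1)_*\bigl(E_{\epsilon^{-1}\cdot x}\bigr)$, and the rest should be a computation of $E$ modulo the kernel of the relevant quotient map. But the execution contains a genuine error that forces the rest of your argument off course: you assert that $E$ is $h(Z)$ (``$E=h$ applied to $Z$ in the sense of \eqref{ze}''). It is not. $E$ is the lift of $Z$ by the \emph{flat linear connection} $\nabla$ of the period structure; in integral linear coordinates $E=\sum_i z_i\,\partial/\partial z_i$, whereas $h(Z)=\sum_i z_i h_i$ with $h_i$ given by \eqref{above}. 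Using $W_i=\partial W/\partial\theta_i$ from Lemma \ref{lemmy}, the two differ by the vertical field
\begin{equation}
h(Z)-E=\sum_{i,p,q}\eta_{pq}\, z_i\,\frac{\partial^2 W}{\partial\theta_i\,\partial\theta_p}\,\frac{\partial}{\partial\theta_q},
\end{equation}
and this discrepancy is precisely the source of the $\epsilon^{-1}$ Hessian term in \eqref{nonlinearguy}. Because you take $E$ to be $h$-horizontal, you are forced to invent a different mechanism for that term (``transporting $\partial/\partial\theta_q$ using the coframe \eqref{above2}'' to account for the difference between $(q_\epsilon)_*$ and $y(\epsilon,\cdot)_*$), which is not a valid operation: pushforward along a submersion kills the kernel of its derivative, it does not rotate coordinate vector fields by a coframe. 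Your displayed congruence is also false as algebra: $E-\epsilon\, h_\epsilon(Z)=E-\epsilon\, h(Z)-v(Z)$, which is not $-v(Z)$ even granting $E=h(Z)$, except at $\epsilon=1$. The correct statement, which produces \emph{both} terms of \eqref{nonlinearguy} in one stroke, is
\begin{equation}
E \;=\; h_\epsilon(Z)-\epsilon^{-1}v(Z)-\bigl(h(Z)-E\bigr)\;\equiv\;-\,\epsilon^{-1}\sum_i z_i\frac{\partial}{\partial\theta_i}\;-\;\sum_{i,p,q}\eta_{pq}\,z_i\,\frac{\partial^2W}{\partial\theta_i\,\partial\theta_p}\,\frac{\partial}{\partial\theta_q} \pmod{H_\epsilon},
\end{equation}
after which multiplying by $-\epsilon^{-1}$ gives the lemma.

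There is a second, related gap: in your first step the relevant kernel is $H_1$ at the point $\epsilon^{-1}\cdot x$, yet you immediately pass to working modulo $H_\epsilon$ at $x$; these are different routes and you conflate them. To stay at $\epsilon^{-1}\cdot x$ one must convert the Hessian of $W$ evaluated there into its value at $x$, which requires the homogeneity \eqref{reeves2} coming from axiom (J3); to work at $x$ modulo $H_\epsilon$ one must first rewrite $-\epsilon^{-1}(q_1)_*(E_{\epsilon^{-1}\cdot x})$ as $-\epsilon^{-1}\,y(\epsilon,\cdot)_*(E_x)$, using that $E$ is invariant under its own flow. You do neither, and without one such homogeneity input the powers of $\epsilon$ and the points of evaluation cannot come out right. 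The paper's proof absorbs all of this bookkeeping into two annihilation statements: $y$ is invariant under the diagonal $\bC^*$-action, hence killed by $\tilde E=\epsilon\,\partial/\partial\epsilon+\sum_i z_i\,\partial/\partial z_i$, and $y$ factors through $q_\epsilon$, hence is killed by $h_i+\epsilon^{-1}v_i$; substituting the second family of relations into the first gives \eqref{nonlinearguy} immediately. Your closing paragraph gestures at exactly this (``phrase everything on $X$''), and your argument becomes correct if you replace its middle steps by the congruence displayed above together with one of the two homogeneity statements.
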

  
  \begin{proof}
The formula $y(\epsilon,x)=q(1,\epsilon^{-1}\cdot x)$ shows that $y$  is invariant under the diagonal $\bC^*$-action on $X\times \bC^*$, and is therefore  annhilated by the vector field \begin{equation}\tilde{E}=\sum_i z_i \cdot \partial/\partial z_i +\epsilon \cdot \partial/\partial \epsilon.\end{equation} 
 On the other hand, the  formula $y(\epsilon,x)=\epsilon^{-1}\cdot q(\epsilon,x)=\epsilon^{-1}\cdot q_\epsilon(x)$ shows that  $y$ factors via the map
 \begin{equation}1\times q_\epsilon\colon \bC^*\times X\to \bC^*\times Z_\epsilon.\end{equation} By definition of the twistor space it is therefore also  annhilated by the vector fields $h_\epsilon(\partial/\partial z_i)=h_i+\epsilon^{-1}v_i$. The result then follows from the formulae  \eqref{v} and \eqref{above}.
 \end{proof}

\subsection{Preferred co-ordinates and Stokes data}
\label{cancor}

The definition of a Joyce structure was first identified in \cite{RHDT2} by considering an analogy between the wall-crossing formula in DT theory and a class of iso-Stokes deformations familiar in the theory of Frobenius manifolds. This analogy can be re-expressed in terms of the geometry of the twistor space $p\colon Z\to \bP^1$, and leads to interesting conjectural properties of $Z$ which we attempt to explain here. This section is  speculative, and can be safely skipped. The basic point is that one should expect preferred systems of co-ordinates on the twistor fibre $Z_1$, in terms of which the twistor lines, viewed as maps $\bC^*\to Z_1$ via the trivialisation \eqref{corona}, have good asymptotic properties as $\epsilon\to 0$.

Let us fix a point $m\in M$ and restrict attention to  points $x\in T_{M,m}\subset X$.
 Changing the notation slightly, we obtain a family of maps $y(\epsilon)\colon T_{M,m}\to Z_1$. Thus for each point $x\in T_{M,m}$, the restriction of the twistor line $\sigma_x\colon \bP^1\to Z$  corresponds under the trivialisation \eqref{corona} to the map $\epsilon\mapsto (\epsilon,y(\epsilon)(x))$. It is then interesting to observe that the equation  \eqref{nonlinearguy} controlling the variation of $y(\epsilon)$   is formally analogous to the  linear equation
 \begin{equation}
 \label{linearguy}\frac{d}{d\epsilon} \,y(\epsilon)=\bigg(\frac{U}{\epsilon^2}+\frac{V}{\epsilon} \bigg)y(\epsilon)\end{equation}
 occurring in the theory of Frobenius manifolds. In the equation \eqref{linearguy} the matrices $U$ and $V$ are  infinitesimal linear automorphisms of the tangent space $T_m M$ at a point $m\in M$ of a Frobenius manifold, whereas the corresponding quantities in    \eqref{nonlinearguy} are infinitesimal automorphisms of the    vector space $T_{M,m}$ preserving  the linear symplectic form $\omega_m$.   This  analogy  was the main topic of \cite{RHDT2}.% and leads to the link with DT invariants.

Suppose that the matrix $U$  in \eqref{linearguy} has distinct eigenvalues $u_i\in \bC$. The Stokes rays of the equation \eqref{linearguy} are then defined to be the rays $\ell_{ij}=\bR_{>0} \cdot (u_i-u_j)\subset \bC^*$. A result of Balser, Jurkat and Lutz \cite{BJL} shows that in any half-plane $H(\varphi)\subset \bC^*$ centred on a non-Stokes ray $r=\bR_{>0}\cdot \exp({i\pi\varphi})\subset \bC^*$, there is a unique fundamental solution $\Phi\colon H(\varphi)\to \operatorname{GL}_n(\bC)$  to \eqref{linearguy} satisfying \begin{equation}
\label{7a}\Phi(\epsilon) \cdot \exp(U/\epsilon)\to \id \text { as } \epsilon\to 0.\end{equation}
Comparing these canonical solutions for half-planes $H(\varphi_\pm)$ centred on small  perturbations $r_\pm$ of a Stokes ray $\ell\subset \bC^*$ defines the Stokes factors $\bS(\ell)\in \operatorname{GL}_n(\bC)$.

Turning to the equation \eqref{nonlinearguy}, note that the first term on the right-hand-side is the translation-invariant vector field $U=\sum_i z_i \cdot \partial/\partial \theta_i$ on  $T_{M,m}$ corresponding to the value $Z_m\in T_{M,m}$  of the vector field $Z$  defined by the period structure on $M$.  Exponentiating this vector field yields well-defined linear automorphisms $\exp(U/\epsilon)$  of  the vector space $T_{M,m}$ given  in co-ordinates by $\theta_i\mapsto \theta_i+\epsilon^{-1} z_i$.
The above analogy then suggests that there should be a countable collection of Stokes rays $\ell\subset \bC^*$, and for any half-plane $H(\varphi)\subset \bC^*$ centred on a non-Stokes ray $r=\bR_{>0}\cdot \exp(i\pi\varphi)\subset \bC^*$, a symplectomorphism $F\colon Z_1\to T_{M,m}$, such that  $\Phi(\epsilon):=F\circ y(\epsilon)\colon T_{M,m}\to T_{M,m}$ satisfies\footnote{The  factors in \eqref{7a} and \eqref{7a+} appear in different orders because we are working in the group of symplectic automorphisms of the space $\bC^n$ rather than the opposite group of Poisson automorphisms of its ring  of functions: compare \cite[Section 6.6]{RHDT2}.}
\begin{equation}
\label{7a+}\exp(U/\epsilon)\circ \Phi(\epsilon) \to \id\text { as } \epsilon\to 0.\end{equation}
The maps $F$ corresponding to different half-planes $H(\varphi)$ then differ by compositions of symplectic automorphisms $\bS(\ell)\in \Aut_{\omega_p}(T_{M,m})$ associated to rays $\ell\subset \bC^*$. These automorphisms $\bS(\ell)$  should be viewed as non-linear Stokes factors.  

In practice  the above analogy should be considered a guiding principle rather than a precise statement. In particular we cannot expect the map $F$ to be defined on the whole of $Z_1$. Nonetheless the basic point is that once we have  chosen a system of integral linear co-ordinates $(z_1,\cdots,z_n)$ near the point  $m\in M$,  there should be preferred systems of Darboux co-ordinates  $(t_1,\cdots,t_{n})$\footnote{Unfortunately these co-ordinates are usually denoted $(x_1,\cdots,x_n)$ since in certain examples they are the logarithms of cluster $X$ co-ordinates. We can only apologise for this excess of $x$'s.} on open subsets of $Z_1$, depending on a choice of half-plane $H(\varphi)\subset \bC^*$. Given a  point $x\in X$ with local co-ordinates $(z_i,\theta_j)$ these should satisfy
 \begin{equation}
\label{asym}t_i(y(\epsilon)(x))\sim -\epsilon^{-1}z_i+\theta_i+O(\epsilon),\end{equation}
as $\epsilon\to 0$ in the half-plane $H(\varphi)$.

% Using the axiom (J2) of a Joyce structure we can modify the above discussion so that it involves symplectic  tori rather than vector spaces.
We observed in Section \ref{joydef} that axiom (J2) of Definition \ref{joyce}  implies that the connections $h_\epsilon=h+\epsilon^{-1}v$  descend to the quotient manifold  $X^\hash=T_M/(2\pi i)\, T_M^{\bZ}.$  
We can then define a twistor space $Z^\hash$ in exactly the same way as before by considering the space of leaves of the resulting foliation on $\bP^1\times X^\hash$. %The quotient map $X\to X^\hash$ induces an {\'e}tale map $Z\to Z^\hash$ between the two twistor spaces.
We can then repeat the above discussion replacing $Z_1$ with $Z^\hash_1$ and the vector spaces $T_{M,m}$ with the tori
\begin{equation}\label{torus}T_{M,m}^{\hash}=T_{M,m}/(2\pi i)\, T_{M,m}^{\bZ}\isom (\bC^*)^{n}.\end{equation}
This leads to an anaalogy between  Frobenius structures and Joyce structures  in which the group of linear automorphisms of the vector space $T_{M,m}$ is replaced by the group of symplectic automorphisms of the torus $T^\hash_{M,m}$.  

 \section{Further properties of twistor space}
 \label{letstwistagain}

Let $M$ be a complex manifold equipped with a Joyce structure. In this section we gather some miscellaneous results about the associated  twistor space $p\colon Z\to \bP^1$  associated to a Joyce structure. Firstly, we investigate Hamiltonian generating functions for the $\bC^*$-action on $Z_\infty$.  Secondly, we observe that the zero section $M\subset X=T_M$ is contracted by the quotient map $q_\infty \colon X\to Z_\infty$, and hence defines a distinguished point $0\in Z_\infty$. Structures on the tangent space $T_{0}\,  Z_\infty$ can then be pulled back to give linear structures on the  tangent bundle $T_M$. Finally we explain an alternative way to describe a Joyce structure in terms of local co-ordinates and a generating function. 
 
\subsection{Joyce function}
\label{joy}
%We take notation as in the previous sections. Thus we consider a Joyce structure on a complex manifold $M$, and the associated closed 2-forms $\Omega_\pm$ and $\Omega_I$ on $X=T_M$. We also have the Euler vector field $E$ on $X$.
From the Cartan formula and \eqref{blbl} we have
\begin{equation}di_E(\Omega_-)=\cL_E(\Omega_-)-i_E(d\Omega_-)=0.\end{equation}
%If a vector field $u$ on $X$ is horizontal for the connection $h=h_\infty$ then
%$\cL_u(\alpha_-)=di_u(\alpha_-)=0$. Thus $\alpha_-=q_\infty^*(\alpha_\infty)$ descends to a closed 1-form on $Z_\infty$.
Let us consider a  locally-defined function $F$ on $X$ satisfying $dF=-i_E(\Omega_-)$.  Then $F$ descends along the quotient map $q_\infty\colon X\to Z_\infty$. % Then $F$ is constant along the flow of $E$ since \begin{equation}
  %  E(F)=i_E(dF)=-i_E(i_E(\Omega_-))=0.
%\end{equation} 
 Indeed, if a vector field $u$ on $X$ is horizontal for the connection $h=h_\infty$  then \begin{equation}u(F)=i_u(dF)=-i_u i_E(\Omega_-)=i_E i_u(\Omega_-)=0,\end{equation}
since $\Omega_-=q_\infty^*(\Omega_\infty)$.
We call the resulting  function $F\colon Z_\infty \to \bC$, or its pullback to $X$, a  Joyce function.\footnote{In \cite{RHDT2} we used the term Joyce function as a synonym for the Pleba{\'n}ski function $W$ of Section \ref{plebe}. Following \cite{AP} we now prefer to use it for the function introduced here, which was also considered by Joyce \cite{holgen}.} In the case of a homogeneous Joyce structure, $F$  is  a Hamiltonian generating function for the symplectic action of $\bC^*$ on $Z_\infty$.

Note that $F$ is only well-defined up to the addition of a constant. It follows from Lemma \ref{meterol} below that the pullback $q_\infty^*(F)$ is constant along the zero section $M\subset T_M=X$. We can therefore normalise $F$  by insisting that it vanishes on this locus. Assuming that $M$ is connected this is equivalent to the statement that $F$ vanishes on the distinguished point $0\in Z_\infty$ of Lemma \ref{meterol}.  With this normalisation $q_\infty^*(F)$ becomes a global function on $X$. 

To write an explicit expression for the Joyce function let us choose a local system of co-ordinates on $M$ as in Section \ref{plebe}, and denote by $W=W(z,\theta)$ the resulting Pleba{\'n}ski function.  

\begin{lemma}
\label{joy2}
The Joyce function $F$ is given by the expression
\begin{equation}\label{domi}F(z_i,\theta_j)=v(E)(W)=\sum_q z_q \cdot \frac{\partial W}{\partial \theta_q}\end{equation}
\end{lemma}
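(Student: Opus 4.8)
The plan is to verify the defining equation $dF = -i_E(\Omega_-)$ directly in local coordinates, using the explicit formula \eqref{w} for $\Omega_-$ together with the coordinate expression $E = \sum_i z_i \cdot \partial/\partial z_i$ from \eqref{ze}. Since $F$ is characterized (up to an additive constant) by this equation, and since we have normalized it to vanish on the zero section, it suffices to check that the proposed candidate $F = \sum_q z_q \cdot \partial W/\partial \theta_q$ both satisfies $dF = -i_E(\Omega_-)$ and vanishes along $M \subset X$. The latter is immediate from \eqref{fix}, since each $\partial W/\partial \theta_q$ vanishes on the zero section.

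First I would compute $i_E(\Omega_-)$ by contracting $E = \sum_i z_i \, \partial/\partial z_i$ into each of the three terms of \eqref{w}. The first term $\frac{1}{2}\sum \omega_{pq}\, d\theta_p \wedge d\theta_q$ contains no $dz$ factors and so is annihilated by $i_E$. Contracting $E$ into the second term $\sum_{p,q}\frac{\partial W_q}{\partial \theta_p}\, d\theta_p \wedge dz_q$ picks out the $dz_q$ slot and yields $-\sum_{p,q} z_q \frac{\partial W_q}{\partial \theta_p}\, d\theta_p$ (recalling $W_q = \partial W/\partial \theta_q$). Contracting $E$ into the third term produces a $dz$-valued one-form involving the quadratic expression $\eta_{rs}\, \frac{\partial W_p}{\partial \theta_r}\frac{\partial W_q}{\partial \theta_s}$. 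On the other side I would compute $dF$ by differentiating $F = \sum_q z_q\, \partial W/\partial \theta_q$ with respect to all the $\theta$ and $z$ variables, so that $dF$ splits into a $d\theta$-part and a $dz$-part.

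The key step is then matching these two computations term by term, separating the $d\theta_p$ components from the $dz_j$ components. The $d\theta$-components should match directly once one identifies $\partial F/\partial \theta_p = \sum_q z_q\, \partial^2 W/\partial \theta_q \partial \theta_p$ with the corresponding term from $i_E(\Omega_-)$. The $dz$-components are more delicate: here I expect to need the Pleba\'nski equation \eqref{point_intro}, which relates the mixed second derivative $\frac{\partial^2 W}{\partial \theta_i \partial z_j} - \frac{\partial^2 W}{\partial \theta_j \partial z_i}$ to the quadratic term $\sum_{p,q}\eta_{pq}\frac{\partial^2 W}{\partial \theta_i \partial \theta_p}\frac{\partial^2 W}{\partial \theta_j \partial \theta_q}$. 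This is precisely the ingredient needed to reconcile the antisymmetrized $z$-derivatives arising in $dF$ with the quadratic $\eta$-term coming from the third summand of $\Omega_-$. Along the way I would also use the homogeneity relation \eqref{reeves2} (equivalently the scaling $\cL_Z(\omega) = 2\omega$ and the Euler-type identity it induces on $W$) to handle the term where $d$ hits the explicit $z_q$ factors in $F$.

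The main obstacle will be the bookkeeping in the $dz$-components, where one must carefully track index symmetrizations and invoke \eqref{point_intro} in exactly the right antisymmetrized combination; the quadratic $\eta$-terms from $\Omega_-$ and from the Pleba\'nski substitution must cancel or combine precisely, and a sign or index-placement error there would derail the match. A clean alternative, which I would keep in reserve, is to avoid coordinates entirely: since $F$ is determined up to a constant by $dF = -i_E(\Omega_-)$ and we know $F$ vanishes on $M$, it is enough to verify the single one-form identity $d\bigl(v(E)(W)\bigr) = -i_E(\Omega_-)$, and one can try to prove this invariantly using $\cL_E(\Omega_-) = 0$ from \eqref{blbl} together with the relation $\Omega_- = \Omega_{J-iK}$ and the explicit action of $J - iK$ from \eqref{here}. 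However, I expect the coordinate computation, anchored by the Pleba\'nski equation, to be the most transparent route to the stated formula.
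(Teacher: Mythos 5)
Your proposal is correct and follows essentially the same route as the paper's own proof: a direct coordinate computation matching $dF$ against $-i_E(\Omega_-)$ via the explicit formula \eqref{w}, with the Pleba\'nski equation \eqref{point_intro} converting the quadratic $\eta$-term into antisymmetrized $z$-derivatives, the Euler identity $\sum_p z_p \cdot \partial W/\partial z_p = -W$ from \eqref{reeves2} handling the term where $d$ hits the explicit $z$-factors, and the normalization fixed by \eqref{fix}. These are exactly the ingredients, in exactly the roles, that the paper uses.
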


\begin{proof}
Using \eqref{w} and \eqref{point_intro}, and recalling that $W_i=\partial W/\partial \theta_i$, we have 

\begin{equation}\begin{split}i_E(\Omega_-)&=-\sum_{p,q} z_q\, \frac{\partial^2 W}{\partial \theta_p \partial\theta_q} \, d \theta_p  +\sum_{p,q} \frac{\partial^2 W}{\partial z_p \partial \theta_q} (z_p\, dz_q - z_q\, dz_p)\\
&=-\sum_{p,q} z_q\, \frac{\partial^2 W}{\partial \theta_p \partial\theta_q} \, d \theta_p- \sum_{p,q} z_q \, \frac{\partial^2 W}{\partial z_p \partial \theta_q} \, dz_p - \sum_q \frac{\partial W}{\partial  \theta_q}\, dz_q\\
&=-\sum_p \Big(\frac{\partial F}{\partial \theta_p} \, d\theta_p +  \frac{\partial F}{\partial z_p} \, dz_p\Big)=-dF,\end{split}\end{equation}
where we used the homogeneity property \eqref{reeves2}  in the form $\sum_p z_p \cdot \frac{\partial W}{\partial z_p} = - W$. The expression \eqref{domi} vanishes along the zero section $M\subset T_M=X$ by the relation \eqref{fix}.
%To see that $F$ descends to the quotient of $X$ by the horizontal vector fields for the connection $h=h_\infty$, note that if $u$ is such a vector field, then \begin{equation}u(F)=i_u(dF)=-i_u i_E(\Omega_-)=i_E i_u(\Omega_-)=0,\end{equation}
%since $\Omega_-$ is pulled back from $Z_\infty$.
\end{proof}

% \begin{remark}
% There are some subtleties hiding in the above proof. Recall from Remark \ref{fab2} that even after the local co-ordinates  on $M$ are chosen, the function $W$ is only well-defined up to addition of functions of the form $a(z)+\sum_j b_j(z)\theta_j$. Thus in principle 
% the expression \eqref{domi} is only well-defined up to the addition of functions pulled back from $M$. However, if the Joyce structure has no poles then these integration constants can be fixed, and once this is done  the function $W$ is indeed homogeneous of weight $-1$ in the variables $z_i$, and the above proof applies.
% When fixing the integration constants  we require that the derivatives $\partial W/\partial \theta_j$ vanish along the locus when all $\theta_i=0$.  As we will see in the next section, this locus is the inverse image of a single distinguished point in $Z_\infty$. Thus the expression \eqref{domi} is the unique choice of Joyce function which vanishes at this point.
% \end{remark}

 \subsection{Distinguished point  of $Z_\infty$}
 \label{lin}
 In this section  we will assume that the base $M$ of our Joyce structure is connected. For simplicity we also assume that the Joyce structure is homogeneous, although this is not strictly necessary.% We also note that to apply the results to a meromorphic Joyce structure one would need to make assumptions about the poles of the structure along  the zero section $M\subset T_M$.  %Note that it is not known whether this condition holds for the Joyce structures of class $S[A_1]$ discussed in Section \ref{geometric}, although it does hold for the special case discussed in Section \ref{a2}.
% We also  assume that the space $M$ is connected.
 
  \begin{lemma}
  \label{meterol}
The map $q_\infty \colon X\to Z_\infty$ contracts the  zero section $M\subset X=T_M$ to a single point $0\in Z_\infty$ which is a fixed point for the induced $\bC^*$-action on $Z_\infty$. 
 \end{lemma}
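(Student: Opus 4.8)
The plan is to show that the zero section $M \subset X = T_M$ is a single leaf of the foliation defining $Z_\infty$, and that this leaf is $\bC^*$-invariant. Recall that $Z_\infty$ is the space of leaves of the foliation whose leaves are the integral manifolds of the distribution $H(\infty) = \im(h) = \im(h_\infty) \subset T_X$ (the purely horizontal sub-bundle). So a point of $Z_\infty$ corresponds to a leaf, and the claim that $q_\infty$ contracts $M$ to a single point is precisely the claim that the zero section lies inside a single horizontal leaf.

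\textbf{Step 1: the zero section is horizontal.} First I would show that the zero section $M \subset X$ is tangent to $H(\infty) = \im(h)$. Working in local integral linear co-ordinates $(z_i,\theta_j)$ as in Section \ref{prejoyce}, the zero section is $\{\theta_j = 0\}$, and its tangent space is spanned by the $\partial/\partial z_i$. The horizontal lifts are $h_i = \partial/\partial z_i + \sum_{p,q}\eta_{pq}\,(\partial W_i/\partial\theta_p)\,\partial/\partial\theta_q$ from \eqref{above}. By the normalisation \eqref{fix}, we have $\partial W_i/\partial\theta_p = \partial^2 W/\partial\theta_i\partial\theta_p$ vanishing along the zero section (differentiate \eqref{fix} or use the oddness \eqref{starmer}). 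Hence along $\{\theta=0\}$ the horizontal field $h_i$ restricts to $\partial/\partial z_i$, which is exactly tangent to the zero section. Therefore $T(\text{zero section}) \subset H(\infty)$ along $M$, so the zero section is an integral submanifold of the distribution $H(\infty)$.

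\textbf{Step 2: the zero section is a full leaf and the contraction is to a point.} Since $\dim M = n = \operatorname{rank} H(\infty)$, the zero section is a half-dimensional integral manifold, hence (being connected, as $M$ is assumed connected) it is contained in a single leaf of the foliation, and by dimension count it \emph{is} an (open piece of a) leaf. Consequently $q_\infty$ maps all of $M$ to a single point, which I call $0 \in Z_\infty$. I expect the only subtlety here to be the usual one flagged in the paper's remark about leaf spaces versus holonomy groupoids; following the paper's convention I would simply note that $q_\infty|_M$ is constant because $M$ is connected and everywhere tangent to the foliation.

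\textbf{Step 3: $\bC^*$-invariance of the point.} Finally I would check that $0$ is fixed by the induced $\bC^*$-action on $Z_\infty$. By Lemma \ref{jen} the action of $\bC^*$ on $X$ generated by $E$ sends $(z_i,\theta_j) \mapsto (tz_i,\theta_j)$, which visibly preserves the zero section $\{\theta=0\}$. Since the $\bC^*$-action on $X$ descends compatibly to $Z_\infty$ (as the $\bC^*$-action on $\bP^1$ fixes $\infty$, the action preserves the fibre $Z_\infty$ and intertwines with $q_\infty$), and the zero section maps to the single point $0$, the point $0$ must be $\bC^*$-fixed. The main obstacle, such as it is, is Step 1: getting the vanishing of the connection coefficients $\partial W_i/\partial\theta_p$ along the zero section cleanly from the normalisation and oddness conditions; everything after that is a dimension count plus a transparent invariance check.
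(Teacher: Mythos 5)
Your proof is correct and takes essentially the same route as the paper: the parity of $W$ in $\theta$ forces $\partial^2 W/\partial\theta_i\partial\theta_p$ to vanish at $\theta=0$, so $h_i=\partial/\partial z_i$ along the zero section, which is therefore contained in a single leaf of the horizontal foliation, and the fixed-point claim follows because the $\bC^*$-action on $X$ preserves the zero section. One small correction: the vanishing in your Step 1 comes only from the oddness \eqref{starmer} (set $\theta=0$ there), not from differentiating \eqref{fix} — the normalisation constrains $W$ and its first $\theta$-derivatives at $\theta=0$ but says nothing about the second $\theta$-derivatives, so that parenthetical alternative should be dropped.
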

 
 \begin{proof}
Note that the parity property \eqref{starmer2}  and the formula \eqref{above} imply that along the zero section $M\subset X=T_M$ we have $h_i=\partial/\partial z_i$. The first claim follows immediately from this. The second claim holds because the  action of $\bC^*$  on $X$ preserves the zero section.
\end{proof}

 The operator $J\colon T_X\to T_X$ maps vertical tangent vectors to horizontal ones, and hence identifies the normal bundle to the zero section $M\subset T_M$ with the tangent bundle $T_M$. The derivative of the quotient map $q_\infty\colon X\to Z_\infty$ identifies this normal bundle with the trivial bundle with fibre $T_{Z_\infty,0}$. The combination of these two maps gives an isomorphism
 \begin{equation}
 \label{conne}T_{M,p}\lRa{J} N_{M\subset X,p} \lRa{q_{\infty,*}} T_{Z_\infty,0},\end{equation}
for each point $p\in M\subset T_M$, and hence a flat connection on the tangent bundle $T_M$. This is the linear Joyce connection from \cite[Section 7]{RHDT2}, which  appeared in the original paper of Joyce \cite{holgen}. In co-ordinates it is given by the formula\todo{Explain}
\begin{equation}
\label{st}\nabla^J_{\frac{\partial}{\partial z_i}}\Big(\frac{\partial}{\partial z_j}\Big)=  -\sum_{l,m}\eta_{lm}\cdot\frac{\partial^3 W}{\partial \theta_i \, \partial \theta_j \, \partial \theta_l}\Big|_{\theta=0} \cdot \frac{\partial}{\partial z_m}.\end{equation}
It was shown in \cite[Section 3.2]{Strachan} that $\nabla^J$  coincides with the Levi-Civita connection of the complex \hk structure on $X$ restricted to the zero-section $M\subset X=T_M$.

The following result follows immediately from the definitions.

\begin{lemma}
The weight space decomposition for the action of $\bC^*$ on   $T_{Z_\infty,0}$ defines via the identification \eqref{conne}  a decomposition
$T_M \isom \bigoplus_{i\in \bZ} V_i$ into $\nabla^J$-flat  sub-bundles  $V_i\subset T_M$.\qed
\end{lemma}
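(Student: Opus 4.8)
The plan is to unwind the three ingredients in play -- the $\bC^*$-action on $Z_\infty$, the identification \eqref{conne}, and the flatness of $\nabla^J$ -- and show that the weight decomposition on $T_{Z_\infty,0}$ transports to a $\nabla^J$-flat decomposition of $T_M$. Since the statement asserts that the result ``follows immediately from the definitions,'' I expect no genuine difficulty; the content is that the isomorphism \eqref{conne} intertwines the relevant structures.

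First I would recall that by Lemma \ref{meterol} the point $0\in Z_\infty$ is a fixed point of the $\bC^*$-action, so the action linearizes on the tangent space $T_{Z_\infty,0}$, giving a weight decomposition $T_{Z_\infty,0}=\bigoplus_{i\in\bZ} W_i$ into eigenspaces where $t\in\bC^*$ acts by $t^i$. I then transport this back along the isomorphism $q_{\infty,*}\circ J\colon T_{M,p}\to T_{Z_\infty,0}$ of \eqref{conne}, and define $V_i\subset T_M$ to be the sub-bundle whose fibre at $p$ corresponds to $W_i$. The key point requiring verification is that the collection $V_i$ is \emph{globally well-defined} and \emph{$\nabla^J$-flat}. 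For this I would use that the connection \eqref{conne} is built from $q_{\infty,*}$, so that $\nabla^J$-flat sections of $T_M$ correspond precisely to constant sections of the trivial bundle with fibre $T_{Z_\infty,0}$; under this correspondence the constant sub-bundle $W_i$ pulls back to a $\nabla^J$-flat sub-bundle $V_i$, and flatness of the decomposition is then automatic.

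The one step that deserves explicit attention is the compatibility between the $\bC^*$-action on $M$ (equivalently on $X$ via $E$) and the weight grading, ensuring that the splitting is respected fibrewise in a $\nabla^J$-parallel way. I would argue that the identification \eqref{conne} is $\bC^*$-equivariant: the action of $\bC^*$ on $X$ generated by $E$ fixes the zero section $M$ (by Lemma \ref{meterol}) and acts on the normal bundle $N_{M\subset X}$, while $J$ is $\bC^*$-equivariant by the relation $\cL_E(J)$ appearing in \eqref{scales} (combined with $\cL_E(K)$), and $q_{\infty,*}$ intertwines the $E$-action on $X$ with the induced $\bC^*$-action on $Z_\infty$. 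Because the $\bC^*$-action on $M$ is generated by the Euler field $Z$ and preserves the affine structure underlying $\nabla^J$, the weight eigenspaces are carried to themselves by parallel transport, so each $V_i$ is indeed $\nabla^J$-flat.

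I do not anticipate a serious obstacle: the assertion is a formal consequence of the fact that $\nabla^J$ is \emph{defined} via the trivialization $q_{\infty,*}\circ J$ in \eqref{conne}, so that \emph{any} fixed linear-algebraic decomposition of the single vector space $T_{Z_\infty,0}$ automatically yields a $\nabla^J$-flat decomposition of $T_M$. The mild subtlety, and the only thing worth spelling out, is checking $\bC^*$-equivariance of \eqref{conne} so that the weight decomposition (rather than an arbitrary decomposition) is the natural one being transported; this follows from the transformation laws \eqref{scales} for $J$, $K$ under $\cL_E$ together with the equivariance of $q_\infty$.
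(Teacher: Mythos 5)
Your proposal is correct and its essential content coincides with the paper's proof, which is left implicit (the lemma carries a \qed because, as your final paragraph says, $\nabla^J$ is \emph{defined} so that its flat sections are exactly the sections corresponding to constant vectors of $T_{Z_\infty,0}$ under \eqref{conne}; hence \emph{any} direct-sum decomposition of that single fixed vector space transports to a decomposition of $T_M$ into $\nabla^J$-flat sub-bundles, and the $\bC^*$-weight decomposition at the fixed point $0$ is simply one such choice).

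One correction to your middle paragraph, which is harmless here but would matter elsewhere: the claim that $J$ is $\bC^*$-equivariant is false. From \eqref{scales} one computes $\cL_E(J)=\tfrac12\bigl(\cL_E(J+iK)+\cL_E(J-iK)\bigr)=\tfrac12\bigl((J+iK)-(J-iK)\bigr)=iK\neq 0$. What is true, by \eqref{foot}, is that the composite \eqref{conne} intertwines the actions only up to a uniform shift of weight by one, reflecting the weight $-1$ rescaling of the fibres of $\pi$ in Lemma \ref{jen}. Since your closing paragraph correctly observes that no equivariance whatsoever is needed for the statement as given, the proof stands; but had the lemma asserted compatibility of the decomposition with the $\bC^*$-action on $M$, the argument as you wrote it would need this repair.
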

 
If the distinguished point $0\in Z_\infty$ is an isolated fixed point for the $\bC^*$ action, there are some additional consequences described in the following result. An example when this condition holds is  described in Section \ref{a2} below. 

 \begin{lemma}
 \label{lem}
 Suppose the point $0\in Z_\infty$ is an isolated fixed point for the action of $\bC^*$. 
 \begin{itemize}
 \item[(i)] The Hessian of the Joyce function $F$  defines a non-degenerate symmetric bilinear form on  $T_{Z_\infty,0}$. Via the identification \eqref{conne} this induces a metric on $M$   whose Levi-Civita connection is the linear Joyce connection $\nabla^J$. 

\item[(ii)]The positive and negative weight spaces of the $\bC^*$-action on $T_{Z_\infty,0}$ define via the identification \eqref{conne} a decomposition $T_M=V_-\oplus V_+$ into $\nabla^J$-flat sub-bundles.
 %$V_\pm =\bigoplus_{\pm i>0} V_i$. 
 These are
 Lagrangian  for the symplectic form $\omega$.
\end{itemize}
\end{lemma}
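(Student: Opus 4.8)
The plan is to move everything to the tangent space at the fixed point $0\in Z_\infty$ and reduce the claim to linear algebra. The essential input is that the induced $\bC^*$-action on $Z_\infty$ preserves the symplectic form $\Omega_\infty$: this follows from $\cL_E(\Omega_-)=0$ in \eqref{blbl} together with $q_\infty^*(\Omega_\infty)=\Omega_-$ from \eqref{dumb}. Linearising the action at $0$ gives a linear $\bC^*$-action on $T_{Z_\infty,0}$ preserving the pairing $(\Omega_\infty)_0$, with weight decomposition $T_{Z_\infty,0}=\bigoplus_{i\in\bZ}W_i$ where $t\in\bC^*$ acts on $W_i$ by $t^i$. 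The $\nabla^J$-flatness asserted in (ii) is already contained in the preceding lemma, since $V_\pm$ are direct sums of the flat sub-bundles $V_i$; so the genuinely new content is the Lagrangian property, which I will obtain by transporting a Lagrangian statement for $(T_{Z_\infty,0},\Omega_\infty)$ back to $(T_{M,p},\omega)$ through the identification \eqref{conne}.

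First the linear algebra. Invariance of the pairing gives $(\Omega_\infty)_0(u,w)=t^{i+j}(\Omega_\infty)_0(u,w)$ for $u\in W_i$, $w\in W_j$ and all $t$, so $(\Omega_\infty)_0$ vanishes on $W_i\times W_j$ unless $i+j=0$; equivalently it restricts to a perfect pairing between $W_i$ and $W_{-i}$. Because $0$ is an isolated fixed point, the weight-zero space $W_0$ --- which is the tangent space at $0$ to the smooth fixed locus --- vanishes. Hence $T_{Z_\infty,0}=W_+\oplus W_-$ with $W_\pm=\bigoplus_{\pm i>0}W_i$, and the pairing rule shows that both $W_+$ and $W_-$ are isotropic. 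Two complementary isotropic subspaces of a symplectic vector space are necessarily Lagrangian, so $W_+$ and $W_-$ are Lagrangian for $(\Omega_\infty)_0$.

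It remains to verify that \eqref{conne} is a symplectomorphism from $(T_{M,p},\omega)$ to $(T_{Z_\infty,0},\Omega_\infty)$, so that the $W_\pm$ correspond to $\omega$-Lagrangian subspaces $V_\pm$. The quotient map $q_\infty$ has kernel the horizontal bundle $\im(h)$, so $q_{\infty,*}$ restricts to an isomorphism on the vertical space $\im(v)\cong N_{M\subset X,p}$; and by \eqref{here} the operator $J$ in \eqref{conne} interchanges $\im(h)$ and $\im(v)$ with no extra factor beyond a sign. Thus the composite isomorphism $\Psi\colon T_{M,p}\to T_{Z_\infty,0}$ equals $\pm\,q_{\infty,*}\circ v$. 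Using $q_\infty^*(\Omega_\infty)=\Omega_-$ and the identity $\Omega_-(v(u_1),v(u_2))=\omega(u_1,u_2)$ read off from \eqref{w}, the sign squares away and
\begin{equation}
\Psi^*(\Omega_\infty)(u_1,u_2)=\Omega_-\big(v(u_1),v(u_2)\big)=\omega(u_1,u_2).
\end{equation}
So $\Psi$ is a linear symplectomorphism, Lagrangian subspaces correspond under it, and $V_\pm=\Psi^{-1}(W_\pm)$ are Lagrangian for $\omega$, as required.

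The step I expect to be the main obstacle is precisely this last identification: because \eqref{conne} is twisted by $J$, which mixes the horizontal and vertical distributions, one must check that its only effect on the symplectic comparison is an overall sign irrelevant to the Lagrangian condition, rather than a genuine distortion of $\omega$. I would record this once and reuse it. The same linearisation also yields part (i): since the generator of the action vanishes at $0$, the Joyce function $F$ has a critical point there, and differentiating $dF=-i_{\bar E}(\Omega_\infty)$ gives $\operatorname{Hess}F(u,w)=-(\Omega_\infty)_0(Au,w)$, where $A$ acts by the weight $i$ on $W_i$; non-degeneracy of the Hessian then follows from $W_0=0$ and the perfect pairing of $W_i$ with $W_{-i}$, while its identification with the restriction of $g$ --- whose Levi-Civita connection is $\nabla^J$ --- is the computation quoted from \cite{Strachan}.
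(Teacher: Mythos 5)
Your treatment of part (ii) and of the non-degeneracy statement in part (i) is correct and follows essentially the same route as the paper: invariance of $\Omega_\infty$ (from \eqref{blbl} and \eqref{dumb}) forces the weight spaces to pair only as $W_i\times W_{-i}$, the isolated-fixed-point hypothesis kills $W_0$, so $W_\pm$ are complementary isotropics and hence Lagrangian; and the composite \eqref{conne} intertwines $\omega_p$ with $\Omega_{\infty,0}$. Your explicit check that $\Psi=\pm\, q_{\infty,*}\circ v$ satisfies $\Psi^*(\Omega_\infty)=\omega$, using $q_\infty^*(\Omega_\infty)=\Omega_-$ and $\Omega_-(v(u_1),v(u_2))=\omega(u_1,u_2)$, is a correct unpacking of the paper's one-line remark that $J$ exchanges $\Omega_-$ and $\Omega_+=\pi^*(\omega)$; likewise the observation that $\nabla^J$-flatness of $V_\pm$ is inherited from the preceding lemma, and the computation $\operatorname{Hess}F(u,w)=-(\Omega_\infty)_0(Au,w)$ giving non-degeneracy, are both fine.

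There is, however, a genuine gap in your final sentence, namely in the claim of part (i) that the Levi-Civita connection of the Hessian metric is $\nabla^J$. You justify this by identifying the Hessian metric with ``the restriction of $g$'' and appealing to the result quoted from \cite{Strachan}. This cannot work: along the zero section the tangent space coincides with the horizontal distribution (this is exactly what the proof of Lemma \ref{meterol} shows), and by \eqref{g} the horizontal distribution is $g$-isotropic, so the restriction of the \hk metric $g$ to the zero section vanishes identically --- it is not the Joyce metric. Moreover the Strachan result concerns the restriction of the Levi-Civita \emph{connection} of $g$, not of the metric, so by itself it does not identify the Levi-Civita connection of the Hessian metric. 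The correct (and short) argument is different: the Hessian of $F$ is a single fixed bilinear form on the vector space $T_{Z_\infty,0}$, and $\nabla^J$ is \emph{by definition} the connection on $T_M$ obtained by declaring the identification \eqref{conne} with this fixed vector space to be flat; hence the metric induced on $T_M$ by any fixed form on $T_{Z_\infty,0}$ is automatically $\nabla^J$-parallel. Since $\nabla^J$ is torsion-free --- visible from \eqref{st}, whose Christoffel symbols are symmetric in $i,j$ --- it is therefore the Levi-Civita connection of that metric. With this replacement your proof is complete.
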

 
 \begin{proof}
 Part (i) is immediate from the result of Lemma \ref{joy2}  that $F$ is the Hamiltonian for the $\bC^*$-action on $Z_\infty$. For part (ii), note that since $\Omega_-$ is $\bC^*$-invariant, the positive and negative weight spaces in $T_{Z_\infty,0}$ are Lagrangian for the form $\Omega_-$. The result then follows by noting that the operator $J$ exchanges the forms $\Omega_-=q_\infty^*(\omega_\infty)$ and $\Omega_+=\pi^*(\omega)$, so the composite \eqref{conne} takes the form $\omega_p$ to $\Omega_{\infty,0}$. 
 \end{proof}
 
The metric $g$ of Lemma \ref{lem}  is given in co-ordinates by the formula
\begin{equation}g\Big(\frac{\partial}{\partial z_i}, \frac{\partial}{\partial z_j}\Big)=\frac{\partial ^2 F}{\partial \theta_i \partial \theta_j}\Big|_{\theta=0}.\end{equation}
This is the Joyce metric of \cite[Section 7]{RHDT2}, which  also appeared in the original paper \cite{holgen}.

\subsection{Another Pleba{\'n}ski function}\label{u}
In Section \ref{plebe}  we described a Joyce structure in local co-ordinates using the Pleba{\'n}ski function $W=W(z,\theta)$. For the sake of completeness we briefly discuss here an alternative generating function, also introduced by Pleba{\'n}ski \cite{P}.  In the literature (see e.g. \cite {DM}) the function  $W$ is called the Pleba{\'n}ski function of the second kind, whereas the function $U$  introduced below is the Pleba{\'n}ski function of the first kind. %Our treatment follows  . 

Consider a Joyce structure on a complex manifold $M$.  We can  introduce
local co-ordinates on $X=T_M$  by combining local Darboux co-ordinates $(z_1,\cdots,z_n)$ on the twistor fibre $Z_0=M$ as in Section \ref{prejoyce}, with the pullback of local  Darboux co-ordinates $(\phi_1,\cdots,\phi_n)$ on the twistor fibre  $Z_\infty$. Then by definition
\begin{equation} \Omega_+=\frac{1}{2}\cdot\sum_{p,q} \omega_{pq}  \cdot dz_p \wedge dz_q,\qquad \Omega_-=\frac{1}{2}\cdot \sum_{p,q} \omega_{pq}  \cdot d\phi_p \wedge d\phi_q.\end{equation}

To find an expression  for $\Omega_I$ in the co-ordinate system $(z_i,\phi_j)$, note that since $Z_\infty$ is the quotient of $X$ by the distribution spanned by the vector fields $h_i$ of \eqref{above}, we can write
\begin{equation}
\label{above4}\frac{\partial}{\partial z_i}\bigg|_{\phi}=\frac{\partial}{\partial z_i}\bigg|_{\theta}+\sum_{p,q} \eta_{pq}\cdot \frac{\partial^2 W}{\partial \theta_i \partial \theta_p}\cdot \frac{\partial}{\partial \theta_q},\end{equation}
where the subscripts indicate which variables are being held fixed. Then
\begin{equation}\frac{\partial}{\partial z_r}\bigg|_{\phi} \Big( \sum_k \omega_{ks}\theta_k\Big)=\frac{\partial^2 W}{\partial \theta_r \partial \theta_s},\end{equation}
and the symmetry of the right-hand side ensures that there is  a locally-defined function $U=U(z_i,\phi_j)$ on $X$ satisfying the equations
\begin{equation}\label{id}\frac{\partial U}{\partial z_s}=\sum_k \omega_{ks}\theta_k, \qquad \frac{\partial ^2 U}{\partial z_r \partial z_s}=\frac{\partial^2 W}{\partial \theta_r\partial \theta_s}.\end{equation}
The formula \eqref{south} then becomes \begin{equation}
\label{nontrev} 2i \Omega_I=-d\Big(\sum_{p,q} \omega_{pq}  \theta_p \, dz_q\Big)=-\sum_{p,q} \frac{\partial^2 U}{\partial \phi_p \partial z_q} \cdot d\phi _p \wedge d z_q.\end{equation}

After restricting to a fibre of  $\pi\colon X\to M$ the formula \eqref{w} shows that 
\begin{equation}\sum_{p,q} \omega_{pq}  \cdot d\theta_p \wedge d\theta_q= \sum_{p,q} \omega_{pq}  \cdot d\phi_p \wedge d\phi_q,\end{equation}
which implies that 
\begin{equation}\sum_{p,q} \omega_{pq} \cdot \frac{\partial \theta_p}{\partial \phi_i} \cdot \frac{\partial \theta_q}{\partial \phi_j}=\omega_{ij}.\end{equation}
 Using \eqref{id} we then obtain the relations
\begin{equation}\label{firstpleb}\sum_{r,s} \eta_{rs} \cdot \frac{\partial^2 U}{\partial \phi_i \partial z_r} \cdot \frac{\partial^2 U}{\partial \phi_j \partial z_s} =-\omega_{ij}\end{equation}
which are  known as Pleba{\'n}ski's first heavenly equations.

%%%%%%%%%%%%%%%%%%%%%%%%%%%%%%%%%%%%%%%%%%%%%%%
%%%%%%%%%%%%%%%%%%%%%%%%%%%%%%%%%%%%%%%%%%%%%%%
%%%%%%%%%%%%%%%%%%%%%%%%%%%%%%%%%%%%%%%%%%%%%%%
%%%%%%%%%%%%%%%%%%%%%%%%%%%%%%%%%%%%%%%%%%%%%%%

\section{Hamiltonian systems}
\label{ham}

In this section we show how to use a Joyce structure to define a time-dependent Hamiltonian system. The construction depends on two additional pieces of data: a cotangent bundle structure on $Z_0$, and a Lagrangian submanifold $R\subset Z_\infty$. We give some motivation   in the next section, where, in the particular case of Joyce structures of class $S[A_1]$, we relate our Hamiltonian systems to isomonodromy connections.

\subsection{Hamiltonian systems}
\label{hamintro}

We begin with the following definition.

\begin{definition}
\label{dolly}
A time-dependent Hamiltonian system consists of the following data:
\begin{itemize}
\item[(i)] a submersion $f\colon Y\to B$ with a relative symplectic form $\Omega\in H^0(Y,\wedge^2\, T^*_{Y/B})$,
\item [(ii)] a flat, symplectic connection $k$ on $f$,
\item[(iii)] a section $\varpi\in H^0(Y, f^*(T_B^*))$.
\end{itemize}
\end{definition}

For each vector field  $u\in H^0(B,T_B)$ there is an associated function \begin{equation}H_u=(f^*(u),\varpi)\colon Y\to \bC.\end{equation} 
There is then a pencil $k_\epsilon$ of symplectic connections on $f$ defined by
\begin{equation}\label{hamil}k_\epsilon(u)=k(u)+\epsilon^{-1}  \cdot \Omega^\sharp(dH_u).\end{equation}
The system is called strongly-integrable if these connections are all flat.

These definitions become more familiar when expressed in local co-ordinates. Take co-ordinates $t_i$ on the base $B$, which we can think of as times, and $k$-flat Darboux co-ordinates $(q_i,p_i)$ on the fibres of $f$, so that $\Omega=\sum_i dq_i\wedge dp_i$. We can then write $\varpi=\sum_i H_i \, dt_i$ and view the functions $H_i\colon Y\to \bC$ as  time-dependent Hamiltonians. The connection $k_\epsilon$ is then given by the flows\begin{equation}\label{noteat}k_\epsilon\Big(\frac{\partial}{\partial t_i}\Big)=\frac{\partial}{\partial t_i}+\frac{1}{\epsilon} \cdot \sum_j\Big(\frac{\partial H_i}{\partial p_j}\frac{\partial }{\partial q_j}-\frac{\partial H_i}{\partial q_j}\frac{\partial }{\partial p_j}\Big).\end{equation} 
%The associated non-autonomous system is obtained by extending the base (pulling back) to $T_B^*$ with co-ordinates $(s_i,t_i)$ and standard symplectic form $\sum_i s_i\wedge dt_i$, and taking Hamiltonians $K_i=H_i-s_i$.
The condition that the system is strongly-integrable is that for all $1\leq i,j\leq n$
\begin{equation}\label{wan}\sum_{r,s}\Big(\frac{\partial H_i}{\partial q_r}\cdot \frac{\partial H_j}{\partial p_s}-\frac{\partial H_i}{\partial q_s}\cdot \frac{\partial H_j}{\partial p_r}\Big)=0, \qquad \frac{\partial H_i}{\partial t_j}=\frac{\partial H_j}{\partial t_i}.\end{equation}

For a nice exposition of Definition \ref{dolly} see  \cite[Section 5]{Boalch}. Note that Boalch works in the  real $C^\infty$ setting, whereas we assume, as elsewhere in the paper, that all structures are holomorphic.  Note  also that Boalch assumes that  $Y=M\times B$ is a global product, with $M$ a fixed symplectic manifold, and $k$  the canonical connection on the projection $f\colon M\times B \to B$. We can always reduce to this case by passing to an open subset of $Y$. 
% Suppose given a strongly-integrable time-dependent Hamiltonian system in the above sense. Let $L\subset Y$ denote a leaf of the foliation $k_1$. The restriction  $\varpi|_L$ is then closed, so we can write $\varpi|_L=d\log(\tau_L)$ for some locally-defined function $\tau_L\colon L\to \bC^*$. In terms of co-ordinates, since the projection $\pi\colon L\to B$ is a local isomorphism, we can lift $t_i$ to co-ordinates on $L$, whence we have
% \begin{equation}\label{row}\frac{\partial}{\partial t_i} \log(\tau_L) =H_i|_L.\end{equation}
% A $\tau$-function in this context is a locally-defined function $\tau\colon Y\to \bC^*$ whose restriction $\tau|_L$ to each leaf $L\subset Y$ satisfies \eqref{row}. Note that this definition only specifies $\tau$ up to multiplication by the pullback of an arbitrary function on the space of leaves. 

% When $M=Z_0$ is the base of a Joyce structure there is a $\bC^*$-action on $M$ satisfying $\cL_E(\omega)=2\omega$. It is then natural to seek a cotangent bundle structure $M\subset T_B^*$ such that  $\cL_E(\lambda)=2\lambda$. Note that this implies that the $\bC^*$-action on $M$ preserves the distribution of vertical vector fields for $\rho$, since this coincides with the kernel of $\lambda$. It follows that there is a $\bC^*$-action on $B$, and that the $\bC^*$-action on $M$ is given by the combining the induced action on $T_B^*$ with a rescaling  of the  fibres with weight 2.

\subsection{Hamiltonian systems from Joyce structures}
\label{hamham}

Let $M$ be a complex manifold with a holomorphic symplectic form $\omega$.  By a cotangent bundle structure on $M$ we mean the data of a complex manifold $B$ and an open embedding $M\subset T_B^*$, such that $\omega$ is the restriction of the canonical symplectic form on $T_B^*$. We denote by $\rho\colon M\to B$ the induced projection map.

The Liouville 1-form on $T^*B$ restricts to a 1-form  $\lambda\in H^0(M,T^*_M)$ on the open subset $M$. It will be convenient to distinguish $\lambda$ from the tautological section $\beta\in H^0(M,\rho^* (T_B^*))$. The two are identified via the inclusion  $\rho^*(T_B^*)\hookrightarrow T_M^*$ induced by $\rho$. 

Given local co-ordinates $(t_1,\cdots,t_d)$ on $B$, there are induced linear co-ordinates $(s_1,\cdots, s_d)$ on the cotangent spaces $T_{B,b}^*$ obtained by writing a 1-form as $\sum_i s_i\, dt_i$. In the resulting co-ordinates $(s_i,t_j)$  on $M$ we have \begin{equation}\omega=\sum_i dt_i\wedge ds_i, \qquad \beta=\sum_i s_i \cdot \rho^*(dt_i), \qquad \lambda=\sum_i s_i\, dt_i.\end{equation} 

Let $M$ be a  complex manifold equipped with a Joyce structure.  Thus
there is a pencil of flat, symplectic  connections $h_\epsilon=h+\epsilon^{-1} v$ on the projection  $\pi\colon X=T_M\to M$, and  closed 2-forms $\Omega_I$ and $\Omega_\pm$ on $X$. We denote by $p\colon Z\to \bP^1$ the associated twistor space. Suppose
% with a symplectic form $\omega$, and suppose given a $\nu$-pencil of flat symplectic connections $h_\epsilon=h+\epsilon^{-1} v$ on the projection $\pi\colon X=T_M\to M$.  Suppose
 also given:
\begin{itemize}
\item[(i)] a cotangent bundle structure $M\subset T^*_B$,

\item[(ii)] a Lagrangian submanifold  $R\subset Z_\infty$.
\end{itemize}

Set $Y=q_\infty^{-1}(R)\subset X$, and denote by $i\colon Y\hookrightarrow X$  the inclusion.  There are maps
\begin{equation}\xymatrix@C=1em{ Y\,\ar@{^{(}->}[rr]^{i} && X  \ar[rr]^{\pi} && M \ar[rr]^{\rho} && B} \end{equation}
Define  $p\colon Y\to M$ and $f\colon Y\to B$ as the composites $p=\pi\circ i$ and $f=\rho\circ \pi\circ i$.
We make the  transversality assumption:
\begin{itemize}
\item[$(\star$)] For  each $b\in B$ the restriction of $q_1\colon X\to Z_1$ to the fibre $f^{-1}(b)\subset Y\subset X$ is {\'e}tale.\end{itemize}

The following result will be proved in the next section.

\begin{thm}
\label{inta}
Given the above data there is a strongly-integrable time-dependent Hamiltonian system on the map $f\colon Y\to B$ uniquely specified by the following conditions:
\begin{itemize}
 \item[(i)]  the relative symplectic form $\Omega$ is induced by the closed 2-form $i^*(2i\Omega_I)$ on $Y$;
 \item[(ii)] for each $\epsilon\in \bC^*$ the  connection $k_\epsilon$  on $f\colon Y\to B$ satisfies \begin{equation}
 \label{leyla}\im (k_\epsilon)=T_Y\cap \im (h_\epsilon)\subset T_X;\end{equation}
 \item[(iii)] the Hamiltonian form is  $\varpi=p^*(\beta)\in  H^0(Y,f^*(T_B^*))$.   \end{itemize}
\end{thm}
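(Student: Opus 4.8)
The plan is to reduce everything to a single family of closed $2$-forms on $Y$, obtained by restricting the twistor forms $q_\epsilon^*(\Omega_\epsilon)$ of \eqref{qe}, and to read the Hamiltonian system off from these. First I would set up the relevant sub-bundles of $\pi^*(T_M)$ along $Y$. Let $\cK\subset\pi^*(T_M)$ be the pullback of $\ker(\rho_*)\subset T_M$, and let $\cP=(q_{\infty,*}\circ v)^{-1}(T_{R})$, so that a vertical vector $v(u)$ lies in $T_Y$ exactly when $u\in\cP$. Two facts are central. Since the fibres of $\rho\colon M\to B$ are open subsets of cotangent fibres, $\cK$ is Lagrangian for $\omega$. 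Since $q_{\infty,*}\circ v$ is an isomorphism $\pi^*(T_M)\to T_{Z_\infty}$ pulling $\Omega_\infty$ back to $\omega$ (using \eqref{here}, \eqref{g} and $\Omega_-=q_\infty^*(\Omega_\infty)$), and $R$ is Lagrangian in $Z_\infty$, the bundle $\cP$ is also Lagrangian for $\omega$. A short computation with $T_X=\im(h)\oplus\im(v)$ then yields $T_Y=\im(h)\oplus v(\cP)$ and $T_{Y/B}=h(\cK)\oplus v(\cP)$, shows $f$ is a submersion, and identifies $T_Y\cap\im(h_\epsilon)=h_\epsilon(\cP)$. Moreover $(\star)$ is precisely the statement $\cK\cap\cP=0$, i.e. $\pi^*(T_M)=\cK\oplus\cP$, which is exactly what makes $h_\epsilon(\cP)$ project isomorphically onto $f^*(T_B)$, hence a genuine Ehresmann connection $k_\epsilon$ on $f$ for each $\epsilon\in\bC^*$.

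Next I would introduce the $2$-forms $\hat\Xi_\epsilon:=\epsilon\cdot i^*(q_\epsilon^*\Omega_\epsilon)=\Omega+\epsilon^{-1}\,p^*(\omega)$ on $Y$, where $\Omega:=i^*(2i\Omega_I)$, using $i^*\Omega_+=p^*\omega$ and $i^*\Omega_-=0$ (the latter because $R$ is Lagrangian). These are closed, being $\epsilon$ times the restriction of the closed form \eqref{qe}. From \eqref{ijk}--\eqref{g} one reads off $2i\Omega_I(h(u_1),v(u_2))=-\omega(u_1,u_2)$, with $2i\Omega_I$ vanishing on pairs of the same type; restricting to $T_{Y/B}=h(\cK)\oplus v(\cP)$ and using that both $\cK$ and $\cP$ are Lagrangian together with $\cK\cap\cP=0$ shows $\Omega$ is fibrewise non-degenerate, establishing (i). Because $\ker(q_\epsilon^*\Omega_\epsilon)=\im(h_\epsilon)$ and $\hat\Xi_\epsilon$ is fibrewise non-degenerate, a rank count gives $\ker(\hat\Xi_\epsilon)=T_Y\cap\im(h_\epsilon)=h_\epsilon(\cP)=\im(k_\epsilon)$ (and $h(\cP)=\im(k)$ at $\epsilon=\infty$), which is condition (ii). Since $\hat\Xi_\epsilon$ is closed its kernel is involutive, so every $k_\epsilon$ is flat; and as $\hat\Xi_\epsilon$ is exactly the canonical $2$-form lifting the pair $(\Omega,k_\epsilon)$, closedness together with Lemma \ref{symp}(ii) shows each $k_\epsilon$ is symplectic. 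In particular $k:=k_\infty$, with $\im(k)=h(\cP)$, is a flat symplectic connection.

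Finally I would match this to Definition \ref{dolly}. In coordinates $(t_i,s_i)$ adapted to $M\subset T^*_B$, with $\omega=\sum_i dt_i\wedge ds_i$ and $H_i:=p^*(s_i)$, one has $p^*(\omega)=\sum_i d(f^*t_i)\wedge dH_i$, so that $\hat\Xi_\epsilon=\Omega+\epsilon^{-1}\sum_i d(f^*t_i)\wedge dH_i$ is the presymplectic form whose kernel defines the Hamiltonian pencil \eqref{hamil}--\eqref{noteat} for $\varpi=\sum_i H_i\,f^*(dt_i)=p^*(\beta)$. Hence the connections specified by (ii) coincide with the Hamiltonian pencil, giving (iii), and the strong-integrability equations \eqref{wan} are equivalent to $d\hat\Xi_\epsilon=0$, which holds automatically. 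Uniqueness is then immediate, since (i), (ii) and (iii) pin down $\Omega$, the pencil $k_\epsilon$ (hence $k$ and the Hamiltonians), and $\varpi$ respectively.

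I expect the main obstacle to be the combination of the fibrewise non-degeneracy of $\Omega$ with the bookkeeping identifying $\hat\Xi_\epsilon$ as the presymplectic form of a Hamiltonian system. This is the point where all three hypotheses must be used at once --- that $\cK$ is Lagrangian (from the cotangent structure), that $\cP$ is Lagrangian (from $R\subset Z_\infty$ being Lagrangian), and the transversality $(\star)=\{\cK\cap\cP=0\}$ --- and where the sign conventions of \eqref{g}, \eqref{south} and \eqref{noteat} must be reconciled so that the Hamiltonian form emerges as $p^*(\beta)$ and not a sign-twisted variant.
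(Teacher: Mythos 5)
Your proposal is correct in substance and its skeleton is the same as the paper's: both arguments restrict the pencil of closed $2$-forms $\epsilon\cdot q_\epsilon^*(\Omega_\epsilon)=\epsilon^{-1}\,\Omega_++2i\,\Omega_I+\epsilon\,\Omega_-$ to $Y$, use the Lagrangian hypothesis on $R$ to kill $\Omega_-$ and the cotangent (Lagrangian fibre) hypothesis to kill $\Omega_+$ fibrewise, and then identify the vertical correction of the pencil as a relative Hamiltonian flow. But several steps are implemented genuinely differently. Where the paper works pointwise with lifts $w\in T_{M,m}$ (its equivalence of conditions (a) and (b), and the propagation of $(\star)_\epsilon$ in $\epsilon$), you package the same information into the two Lagrangian sub-bundles $\cK$ and $\cP$, so that $(\star)$ becomes the manifestly $\epsilon$-independent condition $\cK\cap\cP=0$; where the paper deduces flatness of $k_\epsilon$ from the {\'e}tale map $(f,q_\epsilon)\colon Y\to B\times Z_\epsilon$ and fibrewise non-degeneracy from that same {\'e}taleness, you get flatness from involutivity of the kernel of a closed $2$-form and non-degeneracy from the linear algebra of transverse Lagrangians. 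I checked these: in particular the rank count works because $p^*(\omega)$ vanishes on $T_{Y/B}=h(\cK)\oplus v(\cP)$ (Lagrangian-ness of $\cK$), so $\hat\Xi_\epsilon$ restricts there to $\Omega$, forcing $\ker(\hat\Xi_\epsilon)=h_\epsilon(\cP)$. This packaging is arguably cleaner than the paper's.

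The one step that is wrong as stated is the assertion that ``the strong-integrability equations \eqref{wan} are equivalent to $d\hat\Xi_\epsilon=0$''. Closedness of $\hat\Xi_\epsilon=\Omega+\epsilon^{-1}p^*(\omega)$ is automatic (it is the restriction of a closed form on $X$) and by itself carries no information about \eqref{wan}: for an arbitrary \emph{non}-integrable Hamiltonian system the analogous form $\hat\Omega+\epsilon^{-1}\sum_l d(f^*t_l)\wedge dH_l$ is still closed, but its kernel then fails to contain the vectors $k_\epsilon(\partial/\partial t_i)$ of \eqref{noteat} --- indeed it fails to be a connection at all --- so closedness cannot by itself yield \eqref{wan}. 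The equivalence you invoke presupposes exactly the identification you are trying to establish. The repair stays inside your framework: you have already shown geometrically that $\ker(\hat\Xi_\epsilon)$ is a connection for every $\epsilon$; now unpack the condition that a lift of $\partial/\partial t_i$ lies in this kernel. Its vertical components say that the vertical correction is $\epsilon^{-1}$ times the relative Hamiltonian vector field of $H_i=p^*(s_i)$, which is the identification with \eqref{hamil}--\eqref{noteat} giving (iii) (up to the sign caveat you flag --- note that the paper's own deduction of (iii) from \eqref{commie} contains a compensating sign looseness, so this is not a defect of your argument specifically); and its $f^*(dt_l)$-components are precisely the relations \eqref{wan}. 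Alternatively, conclude as the paper does: once the pencil is identified with \eqref{noteat}, its flatness for all $\epsilon\in\bC^*$ (which you have) is equivalent to \eqref{wan}. With that one sentence repaired, the proof is complete.
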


To make condition (iii) more explicit, take a co-ordinates system $(t_1,\cdots,t_d)$ on $B$, and extend to  a co-ordinate system   $(s_i,t_j)$ on $M$ as above. We can also extend to a   co-ordinate system $(t_i,q_j,p_j)$ on $Y$ as in Section \ref{hamintro}. Then  writing $H_i=p^*(s_i)$, we have $\varpi=\sum_i H_i \cdot  f^*(dt_i)$, and 
\begin{equation}k_\epsilon\Big(\frac{\partial}{\partial t_i}\Big)=\frac{\partial}{\partial t_i}+\frac{1}{\epsilon}\cdot  \sum_j \bigg(\frac{\partial H_i}{\partial q_j} \frac{\partial}{\partial p_j} - \frac{\partial H_i}{\partial p_j} \frac{\partial}{\partial q_j}\bigg).\end{equation}
The main claim of Theorem \ref{inta} is then  that  the connections $k_\epsilon$ are all flat, so that the conditions \eqref{wan} hold for the Hamiltonians $H_i$.

\subsection{Proof of Theorem \ref{inta}}

Take a point $y\in Y\subset X$ with $\pi(y)=m\in M$ and set $b=\rho(m)$. Given  a tangent vector $u\in T_{B,b}$ let us choose a lift  $w\in T_{M,m}$ satisfying $\rho_*(w)=u$. Using the maps $h_\epsilon,v\colon \pi^*(T_M)\to T_X$ as in Section \ref{prejoyce} we then obtain tangent vectors $h_\epsilon(w),v(w)\in T_{X,y}$. Recall that $h_\epsilon(w)=\epsilon^{-1}v(w)+h(w)$, and note that $h(w)\in T_{Y,y}$, since $Y=q_\infty^{-1}(R)$, and $q_\infty$ contracts the leaves of $h=h_\infty$. This implies that for any $\epsilon\in \bC^*$  the  following two conditions are equivalent:\begin{itemize}
%\item[(i)] $h(w)\in (J+iK)(T_{Y,y})\subset T_{X,y}$,
\item[(a)] $v(w)\in T_{Y,y}\subset T_{X,y}$,
\item[(b)] $h_\epsilon(w)\in T_{Y,y}\subset T_{X,y}$.
\end{itemize}
Thus when (b) holds for some $\epsilon\in \bC^*$ it holds for all such $\epsilon$.

Consider next the transversality statement
\begin{itemize}
\item[$(\star)_\epsilon$] The restriction of $q_\epsilon\colon X\to Z_\epsilon$ to the fibre $f^{-1}(b)\subset Y\subset X$ is {\'e}tale at the point $y\in Y$.
\end{itemize}
Since the kernel of the derivative of $q_\epsilon$ at the point $y\in X$ is the image of $h_\epsilon(T_{M,m})$, the condition  $(\star)_\epsilon$ is equivalent to \begin{equation}h_\epsilon^{-1}(T_{Y,y})\cap \ker(\rho_*)=(0)\subset T_{M,m}.\end{equation} But this is 
also the condition for the existence of a unique lift $w\in T_{M,m}$ satisfying condition (b).  Thus since we assumed $(\star)_\epsilon$ for $\epsilon=1$, it holds for all $\epsilon\in \bC^*$.

We can now construct connections $k_\epsilon$ on $\pi\colon Y\to B$ for all $\epsilon\in \bC^*$ by setting $k_\epsilon(u)=h_\epsilon(w)$, where $w$ is the unique lift satisfying condition (b).
In more geometric terms,  the  condition $(\star)_\epsilon$ ensures that the map $(f,q_\epsilon)\colon Y\to B\times Z_\epsilon$ is  {\'e}tale, and the connection $k_\epsilon$ is then pulled back from the trivial connection on the projection $B\times Z_\epsilon\to B$.  This second description shows in  particular that the resulting connection $k_\epsilon$ is flat. 

The closed 2-form $q_\epsilon^*(\Omega_\epsilon)$ defines a relative symplectic form on $f$  by the condition $(\star_\epsilon)$. Recall the identity of closed 2-forms
\begin{equation}
\label{qe2}q_\epsilon^*(\Omega_\epsilon)=\epsilon^{-2} q_0^*(\Omega_0) + 2i\epsilon^{-1} \Omega_I+ q_\infty^*(\Omega_\infty )\end{equation}
from Section \ref{twist}. On restricting to $Y$ the last term on the right-hand side vanishes, since $R\subset Z_\infty$ is Lagrangian. On further restricting to a fibre $f^{-1}(b)\subset Y$ the first term also vanishes, since $\rho^{-1}(b)\subset M$   is Lagrangian. Thus for $\epsilon \in \bC^*$ the forms $\epsilon\cdot q_\epsilon^*(\Omega_\epsilon)$  define the same relative symplectic form $\Omega$ on $f$, and this is also induced by $2i\Omega_I$. Note that the the kernel of the restriction $\Omega_\epsilon|_Y$ clearly contains the subspace $\im(k_\epsilon)$, and hence coincides with it. This implies that the connection $k_\epsilon$ on $f$ is symplectic  \cite[Theorem 4]{Gotay}. 

Observe next that there is a commutative diagram 
\begin{equation}
\label{commie}
\xymatrix@C=2em{ 
 T_X\ar@{->}[rr]^{-(2i\Omega_I)^\flat} && T_X^*
 \\
 T_M\ar[rr]^{\omega^\flat}\ar[u]^{v} && T_M^* \ar[u]_{\pi^*}
} \end{equation}
since the relations \eqref{ijk} show that for tangent vectors $w_1$ to $M$ and $w_2$ to $X$  

\begin{equation}\begin{split}-2i\Omega_I(v(w_1),w_2)&=-g(2i I (v(w_1)),w_2)= -2g(v(w_1),w_2)= g((J+iK)(h(w_1)),w_2)\\
&=\Omega_+(h(w_1),w_2)= (\pi^*\omega)(h(w_1),w_2)=\omega(w_1,\pi_*(w_2)).\end{split}\end{equation}

The fact  that the closed 2-form $i^*(2i\Omega_I)$ induces a relative symplectic form on the map $f\colon Y\to B$ is the statement that the composite of  the bundle maps
\begin{equation}\xymatrix@C=1.6em{ \ker(f_*)\, \ar@{^{(}->}[rr] && T_Y  \ar[rr]^{(2i\Omega_I)^\flat} && T_Y^* \ar[rr] && T_Y^* /f^*(T_B^*)} \end{equation}
is an isomorphism.
The relative Hamiltonian flow $r$  corresponding to a function $H\colon Y\to \bC$ is then the unique  vertical vector field  on the map $f\colon Y\to B$ which  is mapped to $dH$ viewed as a section of $T_Y^*/ f^*(T_B^*)$. In symbols we can write $(2i\Omega_I)^\flat(r)=dH+f^*(\alpha)$ for some covector field $\alpha$ on $B$. 

Consider the canonical section $\varpi=p^*(\beta)\in H^0(Y,f^*(T^*_B))$. Note that given a vector field $u\in H^0(B,T_B)$ the corresponding Hamiltonian $H_u=(f^*(u),\varpi)=p^*(\rho^*(u),\beta)$ on $Y$ is pulled back from $M$. Let us lift $u$ to a vector field $w$  on $M$ as above. Then $k_\epsilon(u)=h_\epsilon(w)=h(w)+\epsilon^{-1}\cdot v(w)$. We claim that $v(w)$ is the relative Hamiltonian flow $r$ defined by the function $H_u$. To prove this we must show that the 1-form $(2i\Omega_I)^\flat(v(w))-dH_u$ on $Y$ is a pullback from $B$. By the commutative diagram \eqref{commie} this is equivalent to showing that the 1-form $\omega^\flat(w)-d(\rho^*(u),\beta)$ on $M$ is a pullback from $B$. 

This final step is perhaps most easily done in local co-ordinates $(s_i,t_i)$ on $M$ as above in which $\beta =\sum_i s_i dt_i$ and $\omega=\sum_i dt_i\wedge ds_i$.  If we take $u=\frac{\partial}{\partial t_i}$ then $(\rho^*(u),\beta)= s_i$ and the lift $w$ has the form $w=\frac{\partial}{\partial t_i}+\sum_j a_{j} \frac{\partial}{\partial s_j}$ for locally-defined functions $a_j\colon M\to \bC$. But then $\omega^\flat(w)=ds_i-\sum_j a_j \, dt_j$, and since the $t_i$ are pulled back via $\rho$ this proves the claim.
We have now defined the symplectic connections $k_\epsilon$ for $\epsilon\in \bC^*$ and proved the relation \eqref{noteat}. We can then define a symplectic connection $k=k_\infty$ by the same relation. Since the $k_\epsilon$ are flat for all $\epsilon\in \bC^*$ the relations \eqref{wan} hold, and it follows that $k$ is also flat.

%%%%%%%%%%%%%%%%%%%%%%%%%%%%%%%%%%%%%%%%%%%%%%%
%%%%%%%%%%%%%%%%%%%%%%%%%%%%%%%%%%%%%%%%%%%%%%%
%%%%%%%%%%%%%%%%%%%%%%%%%%%%%%%%%%%%%%%%%%%%%%%
%%%%%%%%%%%%%%%%%%%%%%%%%%%%%%%%%%%%%%%%%%%%%%%

\section{Joyce structures of class $S[A_1]$}
\label{geometric}

In this section we discuss an interesting class of examples of Joyce structures. They  are related to supersymmetric gauge theories of class $S[A_1]$, and were first constructed in the paper \cite{CH}. The base $M$  parameterises pairs $(C,Q)$ consisting of an algebraic curve $C$ of some fixed genus $g\geq 2$, and a quadratic differential $Q\in H^0(C,\omega_C^{\tensor 2})$ with simple zeroes.  The generalisation to the case of meromorphic quadratic differential with poles of fixed orders  will be treated in the forthcoming work \cite{Z}. After reviewing the construction of these Joyce structures, we relate the twistor fibre $Z_1$ to the associated character variety, and the Hamiltonian systems of Section \ref{ham} to isomonodromy connections. 

\subsection{Construction}
\label{con}

 For each point $(C,Q)\in M$ there is a branched double cover  $p\colon \Sigma\to C$ defined via the equation $y^2=Q(x)$, and equipped with a covering involution $\sigma\colon \Sigma\to \Sigma$. Taking periods of the form $y \, dx$ on $\Sigma$ identifies the tangent space $T_{(C,Q)} M$ with the anti-invariant cohomology group $H^1(\Sigma,\bC)^-$. The intersection pairing on $H_1(\Sigma,\bC)^-$ then induces a symplectic form on $M$, and the  dual of the integral homology groups $H_1(\Sigma,\bZ)^-$  defines an integral affine structure $T_M^{\bZ}\subset T_M$. There is a natural $\bC^*$-action on $M$ which rescales the quadratic differential $Q$ with weight 2. Taken together this defines a period structure  on $M$.
  
 The usual spectral correspondence associates to a $\sigma$-anti-invariant line bundle $L$ on $\Sigma$, a rank 2 vector bundle $E=p_*(L)\tensor \sqrt{\omega_C}\,$ on $C$ with a Higgs field $\Phi$. A key ingredient in \cite{CH} is an extension of this correspondence which relates anti-invariant connections $\partial$ on $L$ to  connections $\nabla$ on $E$.  Given this, we can view the space $X^\hash$ appearing in \eqref{covid} as parameterising the data $(C,E,\nabla,\Phi)$. The pencil of non-linear connections $h_\epsilon$ is then obtained by requiring that the monodromy of the connection $\nabla-\epsilon^{-1}\Phi$ is constant as the pair $(C,Q)$ varies.
 
To explain the construction in a little more detail, let us fix a parameter $\epsilon\in \bC^*$ and contemplate the following diagram of moduli spaces: 

\begin{equation}
\begin{gathered}\label{biggy}
\xymatrix@C=1.8em{  & \cM(C,E,\nabla,\Phi) \ar[dl]_{\alpha} \ar[dr]^{\beta_\epsilon}\\
\cM(C,Q,L,\partial)  \ar[d]_{\pi_3} && \cM(C,Q,E,\nabla_\epsilon)\ar[d]_{\pi_2} \ar@{->}[rr]^{\rho'} &&\cM(C,E,\nabla_\epsilon)\ar[d]_{\pi_1}   \\
\cM(C,Q) \ar@{<->}[rr]^{=} && \cM(C,Q)\ar@{->}[rr]^{\rho}&& \cM(C)
}\end{gathered}
 \end{equation}
Each moduli space  parameterises the indicated objects,  and the  maps $\rho, \rho'$ and $\pi_i$ are the obvious projections. The map $\alpha$ is the above-mentioned extension of the spectral  correspondence, and the map $\beta_\epsilon$ is defined by the rule
\begin{equation}\beta_\epsilon(C,E,\nabla,\Phi)=(C,-\det(\Phi), E, \nabla-\epsilon^{-1}\Phi).\end{equation}
An important  point is that $\alpha$ is birational, and $\beta_\epsilon$ is generically {\'e}tale.

Given a point of $M=\cM(C,Q)$, an anti-invariant  line bundle with connection $(L,\partial)$ on the spectral curve $\Sigma$ has an associated  holonomy representation $H_1(\Sigma,\bZ)^-\to \bC^*$. This determines $(L,\partial)$ up to an action of the group of 2-torsion line bundles on $C$. We therefore obtain an {\'e}tale map from $\cM(C,Q,L,\partial)$ to   the space $X^\hash$. %=T_M/(2\pi i)\, T_M^{\bZ}$.
  The isomonodromy connection on the map $\pi_1$ is a flat symplectic connection  whose leaves consist of connections $(E,\nabla_\epsilon)$ with fixed monodromy. Pulling this connection through \eqref{biggy} gives a family of non-linear symplectic connections  $h_\epsilon$ on the projection $\pi\colon X^\hash\to M$. 
  This  gives  rise to a  meromorphic Joyce structure on $M$, with the poles arising because 
  $\alpha$ is only birational rather than an isomorphism, and $\beta_\epsilon$ is only generically  {\'e}tale.

\subsection{Twistor space}

Consider the twistor space $p\colon Z^\hash\to \bP^1$ associated to the Joyce structure of Section \ref{con}. It could be an interesting  problem to try to give a direct moduli-theoretic construction of this space. For now we can at least describe the  fibre $Z_1$ up to an {\'e}tale cover.  Note that since these Joyce structures are meromorphic, the connections $h_\epsilon$ are only well-defined on the projection $\pi\colon X^0\to M$ of an open subset $X^0\subset X=T_M$. We define the twistor fibres as the spaces of leaves of the resulting foliations of  $X^0$. 
%  %

%
Choose a reference  surface $S_g$ of genus $g$, set $G=\PGL_2(\bC)$, and define
\begin{equation}\MCG(g)=\pi_0(\operatorname{Diff}^+(S_g)),\qquad \cX(g)= \Hom_{\rm grp}(\pi_1(S_g),G)/G.\end{equation}
Then the mapping class group $\MCG(g)$ acts on the character stack $\cX(g)$ in the usual way, and sending a quadruple $(C,E,\nabla)$ to the monodromy of the connection $\nabla$ defines a map \begin{equation}
\label{stu}\mu\colon \cM(C,E,\nabla)/J^2(C)\to \cX(g)/\MCG(g),\end{equation}
whose fibres are by definition the horizontal leaves of the isomonodromy connection. 
Transferring this map across the diagrams \eqref{biggy}, and passing to the leaf space yields an {\'e}tale map
\begin{equation}
\label{stuck}\mu\colon Z_\epsilon^{\hash}\to \cX(g)/\MCG(g).\end{equation}
% Of course, to eliminate the quotient on the right-hand side of \eqref{stuck} we could instead choose to work throughout Section \ref{two} with curves $C$ equipped with a marking,  i.e. an isotopy class of orientation-preserving diffeomorphisms $\phi\colon S_g\to C$. The resulting complex \hk manifold would have twistor fibres $Z^\hash_\epsilon$ which were open subsets of the character variety $\cX(g)$.

It would be interesting to test the  speculations of Section \ref{cancor} in this context. It has been suggested that the appropriate Darboux co-ordinates to consider on the character variety $\cX(g)$ are the Bonahon-Thurston shear co-ordinates \cite{bon,fen}.
Let us instead consider spaces of quadratic differentials with poles of fixed orders.  The corresponding Joyce structures will appear in \cite{Z}.
It is natural to expect that there is a similar map to \eqref{stuck} in which $\cX(g)$ is replaced with a  space of framed local systems \cite{FG}. %The analogue of the map \eqref{stu}  is discussed in Section \ref{abs}.  
 It is then expected that the preferred Darboux co-ordinates associated to a general  half-plane $H(\varphi)$ are  the Fock-Goncharov co-ordinates for  the WKB triangulation determined by the horizontal trajectories of the quadratic differential $ e^{-2\pi i \varphi}\cdot Q$.
 It follows from the work of Gaiotto, Moore and Neitzke \cite{GMN2}, and the results of \cite{BS}, that as $\varphi$ varies these satisfy the jumps \eqref{st} determined by the  DT theory of the corresponding category. The asymptotic property \eqref{asym} should follow from existing results in exact WKB analysis.  % \cite{IN,Nik2}.
 The whole story has been treated in full detail \cite{A2} in the special case  discussed in Section \ref{a2} below.

\subsection{Hamiltonian systems}
\label{fab}

Let us consider the Hamiltonian system of Theorem \ref{inta} in the case of  Joyce structures of Section \ref{con}.
Recall that a crucial feature of the construction of these Joyce structures  is the  isomonodromy connection  on the map
\begin{equation}\label{map}\pi_1\colon \cM(C,E,\nabla_1)\to \cM(C),\end{equation}
which is both flat and symplectic.
To construct a Hamiltonian system  we need a whole one-parameter family of such connections. In the notation of Section \ref{ham}, the isomonodromy connection is $k_1$, but we also need  the connection $k_\infty$ before we can write \eqref{hamil}. This issue is often a little hidden in the literature because in many examples 
there is a natural choice for the reference connection $k_\infty$ which is then taken without further comment. 
It is discussed explicitly in \cite{Hu2}, and is also mentioned for example in \cite[Remark 7.1]{Boalch2}.

One way to try to define  a Hamiltonian system   on the map \eqref{map} is to choose for each bundle $E$ a distinguished `reference' connection $\nabla_\infty$. Then we can define a Higgs field $\Phi=\nabla_\infty-\nabla_1$ and for $\epsilon \in \bC^*$ a connection $\nabla_\epsilon=\nabla_\infty-\epsilon^{-1}\Phi$. We  can then define $k_\epsilon$ to be the isomonodromy connection for the family of connections $(E,\nabla_\epsilon)$. Many interesting examples of isomonodromic systems in the literature involve bundles   with meromorphic connections on a genus 0 curve. Since the generic such bundle $E$ is  trivial, it is then natural to take $\nabla_\infty=d$. But for bundles with connection on higher genus curves there is no such canonical choice. 

Consider the meromorphic Joyce structures of Section \ref{con}. Note that the base $M=\cM(C,Q)$ has a natural cotangent bundle structure, with $B=\cM(C)$ being the moduli space of curves of genus $g$, and  $\rho\colon M\to B$  the obvious projection $\rho\colon \cM(C,Q)\to \cM(C)$. Indeed, the tangent spaces to $\cM(C)$ are the vector spaces $\cT_{C} \cM(C)=H^1(C,T_C)$, and Serre duality gives $H^0(C,\omega_C^{\tensor 2})=H^1(C,T_C)^*$. Thus $T_B^*$ parameterises pairs $(C,Q)$ of a curve $C$ together with a quadratic differential $Q\in H^0(C,\omega_C^{\tensor 2})$, and  $M\subset T_B^*$ is the open subset where $Q$ has simple zeroes. %We note in passing that in this case the fibres of  $\rho$  have a highly non-trivial compatibility with the Joyce structure: in the language of  \cite[Section 4]{CH} they are good Lagrangians. 

Let us choose a  Lagrangian $R\subset \cX(g)/\MCG(g)$.  Pulling back by the {\'e}tale map gives a Lagrangian in $Z_\infty$  which we also denote by $R$.  The subspace of the quotient $\cX(g)/\MCG(g)$ consisting of monodromy representations of connections on a fixed bundle $E$ is known to be Lagrangian.   Then for a generic bundle $E$ we can expect these two Lagrangians to meet in a finite set of points, and so locally on the moduli of bundles we can define $\nabla_\infty$ by insisting that its monodromy  lies in $R$. %In terms of the moduli spaces appearing in Section \ref{geometric} we then have a local embedding $\cM(C,E,\nabla_1)\hookrightarrow \cM(C,E,\nabla,\Phi)$ by writing $\nabla=\nabla_\infty$ and $\Phi=\nabla_\infty-\nabla_1$. 

Using the natural cotangent bundle structure $\rho\colon M\to B$ and the Lagrangian $R\subset Z_\infty$ we can now apply the construction of Section \ref{hamham}  to these examples.  %For the purposes of this discussion let us ignore the distinction between $X=T_M$ and the moduli space $\cM(C,E,\nabla,\Phi)$. The subspace $Y=q_\infty^{-1}(R)\subset \cM(C,E,\nabla,\Phi)$ is then defined by the condition that the monodromy of  the connection $\nabla$ lies in $R$.
We obtain a diagram

 \begin{equation}
\xymatrix@C=1.5em{ 
 Y=q_\infty^{-1}(R)\ \ar@{^{(}->}[rr]^{i}\ar[drr]_{f}
 && \cM(C,E,\nabla,\Phi)\ar@{->}[rr]^{\beta_1}\ar[d]
 &&\cM(C,E,\nabla_1)\ar[dll]^{\pi_1}  \\
 &&  \cM(C)}
 \end{equation}
 where the map $\beta_1$ is defined by setting $\nabla_1=\nabla-\Phi$.
 
 The isomonodromy connection defines a flat connection $h_1$ on the map $\pi_1$. The  Hamiltonian  system of Theorem \ref{inta} defines a whole pencil of flat connections $k_\epsilon$ on the map $f$. The transversality assumption ensures that  the map $\beta_1\circ i$ is {\'e}tale, and  the pullback of the connection $h_1$ then coincides with $k_1$. Thus by choosing the Lagrangian $R\subset Z_\infty$ and using it to define reference connections, we have upgraded the isomonodromy connection   to a Hamiltonian system.

%%%%%%%%%%%%%%%%%%%%%%%%%%%%%%%%%%%%%%%%%%%%%%%
%%%%%%%%%%%%%%%%%%%%%%%%%%%%%%%%%%%%%%%%%%%%%%%
%%%%%%%%%%%%%%%%%%%%%%%%%%%%%%%%%%%%%%%%%%%%%%%
%%%%%%%%%%%%%%%%%%%%%%%%%%%%%%%%%%%%%%%%%%%%%%%

\section{The doubled A$_1$ example}

\label{a1}

In this section we discuss a simple and rather degenerate  example of a pre-Joyce structure which does not quite satisfy the conditions to be a Joyce structure. It is related to the  
 DT theory of the A$_1$ quiver. %It can also be viewed as an example of class $S[A_1]$. 

\subsection{Pre-Joyce structure}

We take $M=\bC^*\times \bC$ with co-ordinates $(z,z^\vee)$, and consider the total space of the tangent bundle $X=T_M$ with corresponding  co-ordinates $(z,z^\vee, \theta,\theta^\vee)$ as in Section \ref{prejoyce}.
We define a pre-Joyce structure on $M$ by taking the symplectic form \begin{equation}
\omega=\frac{1}{2\pi i} \cdot dz\wedge dz^{\vee},
\end{equation} and the pencil of connections $h+\epsilon^{-1} v$ defined by
\begin{equation}\label{connie}h_\epsilon\bigg(\frac{\partial}{\partial z}\bigg)=\frac{\partial}{\partial z}+\frac{1}{\epsilon}\cdot \frac{\partial}{\partial \theta}+\frac{\theta}{2\pi i z}\cdot \frac{\partial}{\partial \theta^\vee}, \qquad h_\epsilon\bigg(\frac{\partial}{\partial z^\vee}\bigg)=\frac{\partial}{\partial z^\vee}+\frac{1}{\epsilon}\cdot \frac{\partial}{\partial \theta^\vee}.\end{equation}
 The corresponding  Pleba{\'n}ski function is
\begin{equation}W(z,z^\vee,\theta,\theta^\vee)=-\frac{\theta^3}{6(2\pi i)^2 z}.\end{equation}
 The formulae \eqref{simple} - \eqref{w} become
 
 \begin{gather}\Omega_+=\frac{1}{2\pi i} \cdot dz\wedge dz^{\vee}, \qquad 2i\Omega_I=\frac{1}{2\pi i} \big(d\theta\wedge dz^\vee-d\theta^\vee\wedge dz\big),\\
 \Omega_-=\frac{1}{2\pi i} \cdot d\theta\wedge d\theta^\vee -\frac{\theta}{(2\pi i)^2 z} \cdot d\theta\wedge dz.\end{gather}
 The above pre-Joyce structure was extracted from the DT theory of the A$_1$ quiver in \cite[Section 8]{RHDT2} by first  applying a doubling procedure and then solving the resulting Riemann-Hilbert problem. The doubling procedure is required because the Euler form of the category is degenerate, and in fact identically zero.

 \subsection{Period structure}
 We can define a period structure on $M$ by declaring the co-ordinates $(z,z^\vee)$ to be integral linear. The resulting lattice $T_M^{\bZ}\subset T_M$ is spanned by integral linear combinations of the vector fields  $\frac{\partial}{\partial z}$ and $ \frac{\partial}{\partial z^{\vee}}$. This is a homogeneous period structure, since the vector field 
 \begin{equation}Z=z \cdot \frac{\partial}{\partial z} + z^\vee \cdot \frac{\partial}{\partial z^\vee}\end{equation}
 generates the $\bC^*$-action  $t\cdot (z,z^\vee)=(tz,tz^\vee)$. The  inverse $\eta\colon T_M^*\times T_M^*\to \O_M$ of the symplectic form $\omega$ satisfies $\eta(dz^\vee,dz)=2\pi i$.
 
 Combining the above pre-Joyce structure and period structure does not quite yield a Joyce structure. The problem is with axiom (J2): the  connection \eqref{connie} is not invariant under translations by the lattice $(2\pi i)\, T_M^{\bZ}$ since it depends on $\theta$ rather than its exponential. This problem seems to be related to the doubling procedure referred to above.

\subsection{Alternative co-ordinates}
 There is another integral affine structure on $M$ with integral affine co-ordinates
\begin{equation}v=z, \qquad v^\vee=z^{\vee}-\frac{z}{2\pi i}\bigg(\log\Big(\frac{z}{2\pi i}\Big)-1\bigg).\end{equation}
The associated co-ordinates $(v,v^\vee,\phi,\phi^\vee)$ on $X=T_M$ are
\begin{equation}\phi=\theta,\qquad \phi^\vee=\theta^\vee-\frac{\theta}{2\pi i} \cdot  \log\Big(\frac{z}{2\pi i}\Big).\end{equation}
In this co-ordinate system the  connection $h_\epsilon$ takes the form
\begin{equation}\label{lakes}h_\epsilon\bigg(\frac{\partial}{\partial v}\bigg)=\frac{\partial}{\partial v}+\frac{1}{\epsilon}\cdot \frac{\partial}{\partial \phi}, \qquad h_\epsilon\bigg(\frac{\partial}{\partial v^\vee}\bigg)=\frac{\partial}{\partial v^\vee}+\frac{1}{\epsilon}\cdot \frac{\partial}{\partial \phi^\vee},\end{equation}
and hence the corresponding Pleba{\'n}ski function $W(v,v^\vee,\phi,\phi^\vee)=0$.

Note however that since 
\begin{equation}
    Z= v\cdot \frac{\partial}{\partial v}+ \Big(v^\vee-\frac{v}{2\pi i}\Big)\cdot \frac{\partial}{\partial v^\vee},
\end{equation}
we cannot combine this integral affine structure with the vector field $Z$ to form a period structure.

\subsection{First Pleba{\'n}ski function}
It follows  from \eqref{lakes} that the fibre co-ordinates $(\phi,\phi^\vee)$ are preserved by the connection $h=h_\infty$. By \eqref{w} they  satisfy 
\begin{equation}\Omega_-=\frac{1}{2\pi i}\cdot d\phi\wedge d\phi^\vee.\end{equation}
They therefore descend to Darboux co-ordinates on the twistor fibre $Z_\infty$.
The corresponding first Pleba{\'n}ski function defined as in  Section \ref{u} is
\begin{equation}U(z,z^\vee,\phi,\phi^\vee)=\frac{1}{2\pi i} \bigg(z^\vee \phi - \phi^\vee z +\frac{\phi  z}{2\pi i}-\frac{\phi z}{2\pi i}\cdot  \log\Big(\frac{z}{2\pi i}\Big)\bigg).\end{equation}
It satisfies
\begin{equation}\frac{\partial U}{\partial z^\vee}=\frac{\theta}{2\pi i}, \qquad \frac{\partial U}{\partial z}=\frac{-\theta^\vee}{2\pi i}.\end{equation}
The Jacobian matrix has entries
\begin{equation}\label{jack}\frac{\partial^2 U}{\partial z \partial \phi}=-\frac{1}{2\pi i} \log\Big(\frac{z}{2\pi i}\Big), \qquad \frac{\partial^2 U}{\partial z^\vee \partial \phi}=\frac{1}{2\pi i}=-\frac{\partial^2 U}{\partial z \partial \phi^\vee}, \qquad \frac{\partial^2 U}{\partial z^\vee \partial \phi^\vee}=0,\end{equation}
and  has determinant $-1/(2\pi i)^2$  as required by the first Pleba{\'n}ski equation \eqref{firstpleb}.

\begin{remark}
In this case the  Jacobian matrix \eqref{jack} is skew-symmetric. This means we can find $\cF=\cF(z,z^\vee)$ such that
\begin{equation}\frac{\partial \cF}{\partial z}=\frac{\partial U}{\partial \phi}, \qquad \frac{\partial \cF}{\partial z^\vee}=-\frac{\partial U}{\partial \phi^\vee}.\end{equation}
Then $\cF$ is closely related to the prepotential of \cite[Section 7]{RHDT2}. Explicitly we have
\begin{equation}\cF(z,z^\vee)=\frac{z z^\vee}{2\pi i}- \frac{1}{(2\pi i)^2} \bigg(\frac{1}{2}z^2\log \Big(\frac{z}{2\pi i}\Big)-\frac{3}{4}z^2\bigg).\end{equation}
\end{remark}

%%%%%%%%%%%%%%%%%%%%%%%%%%%%%%%%%%%%%%%%%%%%%%%
%%%%%%%%%%%%%%%%%%%%%%%%%%%%%%%%%%%%%%%%%%%%%%%
%%%%%%%%%%%%%%%%%%%%%%%%%%%%%%%%%%%%%%%%%%%%%%%
%%%%%%%%%%%%%%%%%%%%%%%%%%%%%%%%%%%%%%%%%%%%%%%

\section{ The A$_2$ example}
\label{a2}

Here we consider a very interesting Joyce structure which is perhaps the simplest non-trivial example. It is related to the  DT theory of the A$_2$ quiver, and is an example of a Joyce structure of class $S[A_1]$. For more details on this example we refer the reader to   \cite{A2}. 

\subsection{Period structure}The base of the Joyce structure is
\begin{equation}M=\{(a,b)\in \bC^2:  4a^3+27b^2\neq 0\}.\end{equation}
Associated to a pair $(a,b)\in M$ is a  quadratic differential
 \begin{equation}
\label{coffee}Q_0(x)\, dx^{\tensor 2}=(x^3+ax+b) \, dx^{\tensor 2}\end{equation} 
 on $\bP^1$ which has a single pole of order 7 at $x=\infty$  and simple zeroes. There is an associated   double cover $\Sigma\to \bP^1$ which  is the projectivization of the affine elliptic curve $y^2=x^3+ax+b$.
 
 Take a basis of cycles  $(\gamma_1,\gamma_2)\subset H_1(\Sigma,\bZ)$ with intersection $\gamma_1\cdot \gamma_2=1$. We shall need the periods and quasi-periods of the elliptic curve $\Sigma$. They are given by
\begin{equation}\omega_i=\int_{\gamma_i} \frac{dx}{2y}, \qquad \eta_i =-\int_{\gamma_i} \frac{x\, dx}{2y},\end{equation}
and satisfy the Legendre relation $\omega_2\eta_1-\omega_1\eta_2=2\pi i$. 

There are local co-ordinates on $M$ given by
\begin{equation}
\label{zz}z_i=\int_{\gamma_i} y\, dx=\int_{\gamma_i} \sqrt{x^3+ax+b\,} \, dx,\end{equation}
and relations 
\begin{equation}
\label{pens1}\frac{\partial}{\partial a}=-\eta_1\frac{\partial}{\partial z_1} -\eta_2\frac{\partial}{\partial z_2},\qquad \frac{\partial}{\partial b}=\omega_1\frac{\partial}{\partial z_1} +\omega_2\frac{\partial}{\partial z_2}.\end{equation}

We define a period structure on $M$ by declaring $(z_1,z_2)$  to be the integral linear co-ordinates. We take the  symplectic form 
 \begin{equation}\omega=-\frac{1}{2\pi i} dz_1\wedge dz_2 = da\wedge db.\end{equation}
 Note that the inverse satisfies
 \begin{equation}\eta\big(dz_1,dz_2\big)=2\pi i\end{equation}
 and so axiom (J1) of Definition \ref{joyce} is satisfied.

 The Euler vector field is
 \begin{equation}Z=z_1\frac{\partial}{\partial z_1}+z_2\frac{\partial}{\partial z_2}= \frac{4a}{5} \frac{\partial}{\partial a} + \frac{6b}{5}  \frac{\partial}{\partial b}.\end{equation}
 
 Note that $Z$  does not generate a $\bC^*$ action on $M$. In fact the moduli space of quadratic differentials of the form \eqref{coffee} is the quotient $M/\mu_5$, where the group $\mu_5\subset \bC^*$ acts via $\zeta\cdot (a,b)= (\zeta^4a,\zeta^6b)$. The vector field $Z$ generates an action of $\bC^*$ on this quotient. 
 
 \subsection{Joyce structure}
 
 We have local co-ordinates $(z_i,\theta_j)$ on the tangent bundle $X=T_M$. We consider the quotient $X^\hash=T_M/(2\pi i)\, T_M^{\bZ}$. The fibre over a point $(a,b)\in M$ is the cohomology group $H^1(\Sigma,\bC^*)\isom (\bC^*)^2$. We introduce some alternative co-ordinates on $X^\hash$ by the relation
 \begin{equation}
\label{xi}\theta_i=-\int_{\gamma_i} \bigg(\frac{p}{x-q} + r\bigg) \frac{dx}{2y},\end{equation}
where $p^2=q^3+aq+b$ and $r\in \bC$. 
The integral is well-defined up to multiples of $2\pi i$. The numbers $\xi_i=\exp(\theta_i)$  are the holonomies of a  connection on the line bundle on $\Sigma$ with divisor $(q,p)-\infty$.

In terms of the  fibre co-ordinates $(\theta_a,\theta_b)$ associated to the co-ordinates $(a,b)$ we then have
\begin{equation}
\label{hide}\theta_i=-\eta_i\theta_a+\omega_i \theta_b.\end{equation}
 The functions $(\theta_a,\theta_b)$  are  given explicitly by
\begin{equation}\label{choc}\theta_a=-\frac{1}{4}\int^{(q,p)}_{(q,-p)} \frac{dx}{y}, \qquad \theta_b=\frac{1}{4}\int^{(q,p)}_{(q,-p)} \frac{x dx}{y}-r.\end{equation}

The Joyce structure on $X=T_M$ is obtained by taking the  connection $h_\epsilon$  given by

\begin{align}\label{da} h_\epsilon\bigg(\frac{\partial}{\partial a}\bigg)&=-\frac{2p}{\epsilon} \frac{\partial}{\partial q}%-\frac{3q^2+a}{\epsilon}\frac{\partial}{\partial p}
-\frac{q}{\epsilon} \frac{\partial}{\partial r} +\bigg(\frac{\partial}{\partial a}-\frac{r}{p}\frac{\partial}{\partial q}%+ \frac{r(3q^2+a)+qp}{2p^2}\frac{\partial}{\partial p}
-\frac{r^2(3q^2+a)-qpr}{2p^3}  \frac{\partial}{\partial r}\bigg),\\
\label{db} h_\epsilon\bigg(\frac{\partial}{\partial b}\bigg)&=-\frac{1}{\epsilon}\frac{\partial}{\partial r}+\bigg(\frac{\partial}{\partial b}%+\frac{1}{2p}\frac{\partial}{\partial p}
 +\frac{r}{2p^2} \frac{\partial}{\partial r}\bigg).\end{align}

This is the isomonodromy connection for a 
pencil of connections $\nabla-\epsilon^{-1}\Phi$ on  the trivial rank 2 bundle on $\bP^1$ with
\begin{equation}\label{conn}\nabla=d-\mat{r}{0}{0}{-r} \frac{dx}{2p}, \qquad \Phi=\mat{p}{x^2+xq+q^2+a}{x-q}{-p} dx.\end{equation}
 Equivalently we can consider the deformed cubic oscillator
\begin{equation}
\label{coffeee!}y''(x)=Q(x)y(x), \qquad Q(x)=\epsilon^{-2}Q_0(x) + \epsilon^{-1} Q_1(x) + Q_2(x),\end{equation}
where the terms in the potential are
\begin{equation}Q_1(x)=\frac{p}{x-q}+r, \qquad
Q_2(x)=\frac{3}{4(x-q)^2}+\frac{r}{2p(x-q)}+\frac{r^2}{4p^2}.\end{equation}

There is a rational expression for the  Pleba{\'n}ski  function
%\begin{equation}\label{joyce}W= \frac{\pi i}{4a^3+27b^2} \big(8ap^3s^3- 2p(6aq^2-9bq+4a^2)s^2- 3p(3b-2aq)s-ap\big).\end{equation}
%\begin{equation}\frac{1}{2\pi i}\cdot (J\circ\Theta)= -\frac{ 2ap^2+3p(3b-2aq)r+(6aq^2-9bq+4a^2)r^2 - 2apr^3}{4(4a^3+27b^2)p}.\end{equation}
\begin{equation}W= \frac{1}{4(4a^3+27b^2)p} \Big(2apr^3-(6aq^2-9bq+4a^2)r^2-3p(3b-2aq)r-2ap^2\Big),\end{equation}
although note that this does not satisfy the second of the normalisation conditions \eqref{fix}. In other words, the uniquely-defined Pleba{\'n}ski function of Lemma \ref{plebe} differs from the above expression by a function of the form $\sum_i a_i(z)\cdot  \theta_i$.

\subsection{Further properties}
%\begin{equation}
%\label{fir1}-\frac{1}{2 p\epsilon} \frac{\partial}{\partial s}+\frac{\partial}{\partial b}%+\frac{1}{2p} \cdot \frac{\partial}{\partial p}
%,\qquad
% -\frac{2p}{\epsilon} \frac{\partial}{\partial q}+\frac{(3q^2+a)s}{p\epsilon }  \frac{\partial}{\partial s}%-\frac{(3q^2+a)}{\epsilon }  \cdot \frac{\partial}{\partial p}
% +\frac{\partial}{\partial a}-q\frac{\partial}{\partial b}-2s\frac{\partial}{\partial q}%-\frac{s(3q^2+a)}{p}\cdot \frac{\partial}{\partial p}
% .\end{equation}

%The fact that $W$ is cubic in $r$ shows that the submanifolds $a=\text{const}$ are half-flat.

In the co-ordinates $(a,b,q,r)$ the Euler vector field $E$ is
\begin{equation}E=\frac{4a}{5} \frac{\partial}{\partial a} + \frac{6b}{5}  \frac{\partial}{\partial b}+ \frac{2q}{5} \frac{\partial}{\partial q}+ \frac{r}{5} \frac{\partial}{\partial r}.\end{equation}
The expressions \eqref{pens1} and \eqref{hide} and a short calculation using \eqref{choc} gives
\begin{equation}
\label{flour}2i\Omega_I=-da \wedge d\theta_b+db\wedge d\theta_a =dq\wedge dp+da\wedge dr.\end{equation}

The twistor fibre $Z_\infty$ is the space of leaves of the foliation defined by the connection $h=h_\infty$. The functions 
\begin{equation}\phi_1=q+\frac{ar}{p}, \qquad \phi_2=\frac{r}{2p}, \end{equation}
descend to  $Z_\infty$
because they are constant along the  flows \eqref{da} and \eqref{db} with $\epsilon=\infty$. Moreover, if we restrict to a fibre $F$ of the projection $\pi\colon X=T_M\to M$  by fixing $(a,b)$, then 
 \begin{equation}-\frac{1}{2\pi i}\cdot d\theta_1\wedge d\theta_2\big|_F=d\theta_a \wedge d\theta_b\big|_F=-\frac{dr}{2p}\wedge dq\big|_{F}=d\phi_1\wedge d\phi_2\big|_F.\end{equation}
 Thus we have
 \begin{equation}\Omega_\infty=d\phi_1\wedge d\phi_2.\end{equation}

 The first Pleba{\'n}ski function $U\colon X\to \bC$ of Appendix \ref{u} is   given by
\begin{equation}\label{analo}U=\frac{1}{2} \int_{(q,-p)}^{(q,p)} (x^3+ax+b)^{1/2}\, dx,\end{equation}
because a simple calculation using the flows \eqref{da} and \eqref{db} shows that
\begin{equation}\frac{\partial U}{\partial b}\bigg|_{\phi}=h\Big(\frac{\partial}{\partial b}\Big) U=-\theta_a, \qquad \frac{\partial U}{\partial a}\bigg|_{\phi}=h\Big(\frac{\partial}{\partial a}\Big) U=\theta_b.\end{equation}

 Since the $\bC^*$-action rescales $\phi_1$ and $\phi_2$ with weights $- \tfrac{2}{5}
$ and $\tfrac{2}{5}$ respectively, we have
\begin{equation}i_E(\Omega_\infty)=i_E(d\phi_1\wedge d\phi_2)=\frac{2}{5} d (\phi_1\phi_2).\end{equation}
The Joyce function $F$ of Section \ref{joy} is given by
\begin{equation} F=\frac{1}{5} (1-2\phi_1\phi_2),\end{equation}
where for  the constant normalisation we used the limiting behaviour of $q,p,r$ along the zero section $M\subset T_M$ as discussed in \cite[Section 4.3]{A2}.

The distinguished fixed point of $Z_\infty$ is defined by $(\phi_1,\phi_2)=(0,\infty)$. It is an isolated fixed point. The linear Joyce connection on $M$ is the one whose flat co-ordinates are $(a,b)$, and the negative and positive weight spaces $V_-$ and $V_+$ of Lemma \ref{lem} are spanned by $\frac{\partial}{\partial a}$ and $\frac{\partial}{\partial b}$ respectively. The Joyce metric is
\begin{equation}g=\frac{1}{5} \cdot (da\tensor db+ db\tensor da).\end{equation}

\end{document}

%%%%%%%%%%%%%%%%%%%%%%%%%%%%%%%%%%%%%%%%%%%%%%%%
%%%%%%%%%%%%%%%%%%%%%%%%%%%%%%%%%%%%%%%%%%%%%%%%
%%%%%%%%%%%%%%%%%%%%%%%%%%%%%%%%%%%%%%%%%%%%%%%%